\documentclass[a4paper,12pt,leqno]{amsart}
\usepackage{geometry}
\geometry{margin=1in}
\usepackage{mathtools, enumitem, etoolbox}
\usepackage[natbib=true, style=numeric, sorting=nyvt]{biblatex}
\usepackage{hyperref}
\addbibresource{bibl.bib}
\numberwithin{equation}{section}
\newtheorem{theorem}{Theorem}
\newtheorem{corollary}{Corollary}
\newtheorem{proposition}{Proposition}[section]
\newtheorem{remark}[proposition]{Remark}
\newtheorem{lemma}[proposition]{Lemma}
\newcommand{\N}{\mathbb{N}}
\newcommand{\R}{\mathbb{R}}
\newcommand{\E}{\mathbb{E}}
\newcommand{\p}{\mathbb{P}}
\newcommand{\Z}{\mathbb{Z}}
\newcommand{\C}{\mathbb{C}}

\newcommand*\conj[1]{\overline{#1}}
\begin{document}
\begin{abstract}
For $f$ a Steinhaus random multiplicative function, we prove convergence in distribution of the appropriately normalised partial sums 
\[ 
\frac{{(\log \log x)}^{1/4}}{\sqrt{x}} \sum_{\substack{n \leq x \\ P(n) > \sqrt{x}}} f(n),
\]
where $P(n)$ denotes the largest prime factor of $n$. We find that the limiting distribution is given by the square root of an integral with respect to a critical Gaussian multiplicative chaos measure multiplied by an independent standard complex normal random variable.
\end{abstract}
\author{Seth Hardy}
\title[The distribution of random multiplicative functions]{The distribution of partial sums of random multiplicative functions with a large prime factor}
\address{Mathematics Institute, Zeeman Building, University of Warwick, Coventry CV4 7AL, England}
\email{Seth.Hardy@warwick.ac.uk}
\date{\today}
\thanks{The author is supported by the Swinnerton-Dyer scholarship at the Warwick Mathematics Institute Centre for Doctoral Training.}
\maketitle
\section{Introduction}
\subsection{Background}
With $\mu$ the Möbius function, the statement 
\[ 
\sum_{n \leq x} \mu(n) \ll_\varepsilon x^{1/2 + \varepsilon} ~~~ \text{for all } \varepsilon > 0
\]
is equivalent to the Riemann hypothesis. This fact motivated the 1944 paper of~\citet{Wintner}, where a probabilistic model for the Möbius function was investigated. Noting that the Möbius function is the unique multiplicative function supported on square-free integers that takes the value $-1$ on primes, Wintner took a collection of independent identically distributed random variables ${(f(p))}_{p \text{ prime}}$ such that $ \p (f(p) = \pm 1) = 1/2$, and considered the \emph{random multiplicative function} defined by
\[ 
f(n) = \prod_{p | n} f(p), 
\]
on square-free $n$, and $f(n) = 0$ otherwise. This function is known in the literature as the Rademacher random multiplicative function. In his paper, Wintner proved that for any $\varepsilon>0$ we almost surely have 
\[ 
\sum_{n \leq x} f(n) \ll_\varepsilon x^{1/2 + \varepsilon}, 
\]
proving a random analogue of the Riemann hypothesis. An alternative random model is the Steinhaus random multiplicative function: a model for Dirichlet characters and continuous characters. We define a Steinhaus random multiplicative function $f$ by taking ${(f(p))}_{p \text{ prime}}$ to be independent Steinhaus random variables; that is, random variables uniformly distributed on the unit circle $\{ z \in \mathbb{C} \colon |z| = 1 \}$, and setting 
\[ 
f(n) = \prod_{p \mid n} {f(p)}^{v_p(n)}, 
\]
where $v_p(n)$ denotes the $p$-adic valuation of $n$. \\
Random multiplicative functions have been studied extensively since Wintner. The main result of this paper provides a step towards the longstanding open problem concerning the distribution of the partial sums $\sum_{n \leq x} f(n)$, for $f$ a Steinhaus random multiplicative function:
\begin{theorem}\label{t:main}
Let $P(n)$ denote the largest prime factor of $n$, let $\gamma \coloneqq \lim_{n \rightarrow \infty} (\sum_{k=1}^n \frac{1}{k} - \log n )$ be the Euler--Mascheroni constant, and let $C = \frac{e^{- \gamma} \log 2}{2 \pi}$. For $f$ a Steinhaus random multiplicative function, we have
\[ 
\frac{{(\log \log x)}^{1/4}}{\sqrt{x}} \sum_{\substack{n \leq x \\ P(n) > \sqrt{x}}} f(n) \xrightarrow{d} \sqrt{C V_{\mathrm{crit}}} \, \mathcal{CN} (0, 1) \quad \text{as } x \rightarrow \infty , 
\]
where $\mathcal{CN} (0,1)$ is a standard complex Gaussian random variable and $V_{\mathrm{crit}}$ is a random variable that is independent of the Gaussian such that $\E [V_{\mathrm{crit}}^q]$ exists for each $0 < q < 1$. We define $V_{\mathrm{crit}}$ as a distributional limit of integrals with respect to a critical Gaussian multiplicative chaos measure. See Theorem~\ref{t:SWext} for details.
\end{theorem}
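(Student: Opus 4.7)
The plan is to decompose $S(x) := \sum_{n \leq x,\, P(n) > \sqrt{x}} f(n)$ by largest prime factor and then apply a conditional central limit theorem. Every $n \leq x$ with $P(n) = p > \sqrt{x}$ factors uniquely as $n = pm$ with $m \leq x/p < \sqrt{x} < p$, which automatically forces $p \parallel n$, $\gcd(m,p) = 1$ and $P(m) < p$. Writing $A(y) := \sum_{m \leq y} f(m)$ and using multiplicativity of $f$, one obtains
\[
S(x) \;=\; \sum_{\sqrt{x} < p \leq x} f(p) \, A(x/p).
\]
The key structural point is that each $A(x/p)$ is measurable with respect to $\mathcal{F} := \sigma(f(q) : q \leq \sqrt{x})$, whereas the Steinhaus variables $(f(p))_{p > \sqrt{x}}$ are independent of $\mathcal{F}$ and mutually independent.

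Conditional on $\mathcal{F}$, $S(x)$ is a sum of independent complex variables $Z_p := f(p) A(x/p)$ with $\E[Z_p \mid \mathcal{F}] = 0$, vanishing pseudo-variance $\E[Z_p^2 \mid \mathcal{F}] = 0$ (since $\E[f(p)^2]=0$), and variance $\E[|Z_p|^2 \mid \mathcal{F}] = |A(x/p)|^2$. Setting
\[
V(x) \;:=\; \sum_{\sqrt{x} < p \leq x} |A(x/p)|^2,
\]
I would apply a conditional Lindeberg--Feller CLT for complex-valued triangular arrays to deduce $S(x)/\sqrt{V(x)} \xrightarrow{d} \mathcal{CN}(0,1)$ $\mathcal{F}$-stably, the vanishing pseudo-variance ensuring a circularly symmetric limit. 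The Lindeberg condition reduces to $\max_{p > \sqrt{x}} |A(x/p)|^2 / V(x) \to 0$ in probability, which follows from any polynomial bound $|A(y)| \ll \sqrt{y}(\log y)^{O(1)}$ together with the lower bound $V(x) \gtrsim x/\sqrt{\log\log x}$ supplied by the next step.

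The remaining ingredient is the distributional limit
\[
\frac{(\log\log x)^{1/2}}{x}\, V(x) \;\xrightarrow{d}\; C \, V_{\mathrm{crit}},
\]
which is precisely where Theorem~\ref{t:SWext} enters. Partial summation against the prime number theorem rewrites $V(x)$ as a $(1+o(1))$-multiple of $x\int_{1}^{\sqrt{x}} |A(y)|^2 \, y^{-2}\, (\log(x/y))^{-1}\, dy$, and the random density $|A(y)|^2/y^2$, once rescaled by the critical Seneta--Heyde factor $(\log\log x)^{1/2}$, converges to an integral against a critical Gaussian multiplicative chaos measure. The explicit constant $C = e^{-\gamma}\log 2/(2\pi)$ decomposes as the Mertens constant from the normalisation of $\E[|A(y)|^2]$ multiplied by $\log 2 = \int_{1/2}^{1} ds/s$ arising from the logarithmic size of the range $\sqrt{x} < p \leq x$. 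Combining this with the stable convergence of the previous step via Slutsky then yields
\[
\frac{(\log\log x)^{1/4}}{\sqrt{x}}\, S(x) \;=\; \frac{S(x)}{\sqrt{V(x)}} \cdot \sqrt{\frac{(\log\log x)^{1/2}\, V(x)}{x}} \;\xrightarrow{d}\; \sqrt{C V_{\mathrm{crit}}}\, \mathcal{CN}(0,1),
\]
with the independence of the two factors inherited from $\mathcal{F}$-stability.

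The principal obstacle is this final convergence to a critical GMC limit. The naive second moment of $V(x)/x$ already diverges like $\log\log x$, so the $(\log\log x)^{1/2}$ normalisation is forced by criticality and the limit can only possess fractional moments, consistent with the stated property $\E[V_{\mathrm{crit}}^q] < \infty$ for $0 < q < 1$. Establishing this limit is genuinely deep input and is packaged into Theorem~\ref{t:SWext}, to be proved in the spirit of Saksman--Webb and Soundararajan--Wirzenius but adapted to the critical regime. Once it is in hand, the conditional CLT step is comparatively routine, requiring only enough care to exclude atypical smallness of $V(x)$; this in turn reduces to the almost-sure positivity of $V_{\mathrm{crit}}$.
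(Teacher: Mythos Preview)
Your high-level architecture matches the paper: decompose by largest prime, apply a conditional CLT over $(f(p))_{p>\sqrt{x}}$ with the inner sums frozen, and then identify the limit of the conditional variance. The paper carries out the CLT step via a quantitative complex Berry--Esseen bound (Proposition~\ref{p:steincna}) rather than Lindeberg, which lets it avoid dividing by $\sqrt{V(x)}$ and the attendant worry about atypically small $V(x)$; your Lindeberg route is also workable, though the pointwise bound $|A(y)|\ll \sqrt{y}(\log y)^{O(1)}$ you invoke is itself a non-trivial almost-sure input.

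The substantive gap is in the variance step. Theorem~\ref{t:SWext} is \emph{not} a statement about $V(x)$ or about $\int_1^{\sqrt{x}} |A(y)|^2 y^{-2}(\log(x/y))^{-1}\,dy$; it asserts convergence of the Euler product integral
\[
\frac{(\log\log x)^{1/2}}{\log x}\int_{\mathbb R}\frac{|F_{(x)}(1/2+it)|^2}{|1/2+it|^2}\,dt
\]
to $V_{\mathrm{crit}}$, and that is the only form in which Saksman--Webb technology is available. The passage from $\frac{(\log\log x)^{1/2}}{x}V(x)$ to this Euler product integral is a separate theorem (Theorem~\ref{t:variance}) and is the bulk of the paper: one applies Perron's formula to each $A(x/p)$, reduces the resulting double $t_1,t_2$ integral to the near-diagonal $|t_1-t_2|\leq (\log\log x)^5/\log x$ using decorrelation of the Euler products, splits off a short Euler product $F_y$ on small primes, and then proves a conditional concentration result (Proposition~\ref{p:smallprimesconc}) showing that the contribution of the large primes may be replaced by its expectation; only after all this does the $\log 2$ and the $e^{-\gamma}$ emerge from a rough-number count (Lemma~\ref{l:roughintest}). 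Your partial-summation rewrite of $V(x)$ does not produce an Euler product integral, the $(1+o(1))$ you claim involves a prime-number-theorem error against the \emph{random} weights $|A(x/p)|^2$ and is not automatic, and the constant $\log 2$ does not arise from $\int_{1/2}^1 ds/s$ in the paper's derivation but from the Buchstab/rough-number asymptotic. So while your outline is correct in spirit, you have effectively merged Theorem~\ref{t:variance} and Theorem~\ref{t:SWext} into a single black box and thereby omitted the main technical contribution.
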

\begin{corollary}[Convergence of moments]\label{c:momentconvergence}
Let $f$ be a Steinhaus random multiplicative function. For any fixed $0 < q < 2$, there exists a constant $M = M(q) > 0$ such that
\[
\lim_{x \rightarrow \infty} \E {\biggl(\frac{{(\log \log x)}^{1/4}}{\sqrt{x}} \Bigl| \sum_{\substack{n \leq x \\ P(n) > \sqrt{x}}} f(n) \Bigr| \biggr)}^{q} = M .
\]
Specifically, we find that $M = C^{\frac{q}{2}} \E \bigl[ V_{\mathrm{crit}}^{\frac{q}{2}} \bigr] \Gamma (\frac{q}{2} + 1)$, where $C$ and $V_{\mathrm{crit}}$ are as in Theorem~\ref{t:main}, and $\Gamma$ is the gamma function.
\end{corollary}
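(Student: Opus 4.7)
The plan is to deduce the corollary from Theorem~\ref{t:main} by combining the distributional convergence with a uniform integrability argument. Let $Y_x$ denote the complex-valued random variable on the left hand side of Theorem~\ref{t:main}. Applying the continuous mapping theorem to the continuous function $z \mapsto |z|^q$ gives
\[
|Y_x|^q \xrightarrow{d} C^{q/2} V_{\mathrm{crit}}^{q/2} |\mathcal{CN}(0,1)|^q .
\]
Since $V_{\mathrm{crit}}$ is independent of $\mathcal{CN}(0,1)$, the expectation of the right hand side factorises; because $|\mathcal{CN}(0,1)|^2$ is an exponential random variable of mean $1$, a direct computation gives $\E |\mathcal{CN}(0,1)|^q = \Gamma(q/2+1)$; and the hypothesis that $V_{\mathrm{crit}}$ has finite moments of every order in $(0,1)$ ensures $\E[V_{\mathrm{crit}}^{q/2}] < \infty$ for all $q < 2$. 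This identifies the candidate constant $M = C^{q/2} \E[V_{\mathrm{crit}}^{q/2}] \Gamma(q/2+1)$.

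To upgrade the distributional convergence to convergence of $q$-th absolute moments, it suffices to show that $\{|Y_x|^q\}_{x \geq 3}$ is uniformly integrable, for which it is enough to produce some $q' \in (q,2)$ with $\sup_x \E[|Y_x|^{q'}] < \infty$. I would obtain this from a Harper-type low-moment estimate: for every $p \in (0,2)$,
\[
\E \Bigl| \sum_{\substack{n \leq x \\ P(n) > \sqrt{x}}} f(n) \Bigr|^{p} \ll_p \Bigl( \frac{x}{\sqrt{\log\log x}} \Bigr)^{p/2} .
\]
The analogous bound for the full sum $\sum_{n \leq x} f(n)$ is due to Harper, and his argument adapts to the present restricted sum; alternatively, such an estimate should fall out of the moment analysis that underlies the proof of Theorem~\ref{t:main}. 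Rescaling by the factor $(\log\log x)^{1/4}/\sqrt{x}$ then yields $\E[|Y_x|^{q'}] = O(1)$ uniformly in $x$, and the standard result that convergence in distribution together with uniform integrability implies convergence of expectations completes the proof.

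The main obstacle is precisely the uniform $L^{q'}$-bound for $q'$ arbitrarily close to $2$. A naive second-moment computation actually gives $\E[|Y_x|^2] \asymp (\log 2)(\log\log x)^{1/2}$, which diverges, so the required cancellation is of genuine \emph{better than square-root} type and sits exactly at the critical scale of multiplicative chaos. This is the sharp regime for partial sums of Steinhaus random multiplicative functions, and importing (or re-deriving in the restricted setting) the appropriate critical-chaos moment bound is the only nontrivial input beyond Theorem~\ref{t:main} itself.
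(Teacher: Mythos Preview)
Your proposal is correct and follows essentially the same route as the paper: continuous mapping theorem applied to Theorem~\ref{t:main}, followed by uniform integrability via a uniform $L^{q'}$ bound for some $q' \in (q,2)$, with that bound supplied by Harper's low-moment machinery adapted to the restricted sum. The paper packages exactly the input you identify as the ``main obstacle'' into Lemma~\ref{l:mcexpectation}, whose second statement is precisely $\E\bigl[|Y_x|^{2q'}\bigr] \ll_{q'} 1$ for $0<q'<1$, derived (as you anticipated) by the same argument as in Harper's treatment of the full sum with conditioning on ${(f(p))}_{p\le \sqrt{x}}$ in place of ${(f(p))}_{p\le x^{e^{-1}}}$.
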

\begin{corollary}[Heavy tails]\label{c:heavytails}
For $f$ a Steinhaus random multiplicative function, and for any $\varepsilon > 0$, $y \geq 2$, we have 
\[ 
\frac{1}{y^{2+\varepsilon}} \ll_{\varepsilon} \p \biggl( \frac{{(\log \log x)}^{1/4}}{\sqrt{x}} \Bigl| \sum_{\substack{n \leq x \\ P(n) > \sqrt{x}}} f(n) \Bigr| > y \biggr) \ll_{\varepsilon} \frac{1}{y^{2-\varepsilon}} ,
\]
when $x$ is sufficiently large in terms of $y$.
\end{corollary}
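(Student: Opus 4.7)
The plan is to prove the upper and lower tails separately: the upper tail via Markov's inequality combined with the moment convergence of Corollary~\ref{c:momentconvergence}, and the lower tail via the distributional limit of Theorem~\ref{t:main} combined with the heavy-tail behaviour of critical Gaussian multiplicative chaos.

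For the upper bound, write $X_x$ for the normalised partial sum appearing in the statement. Applying Markov's inequality with exponent $q = 2 - \varepsilon$ yields
\[
\p(|X_x| > y) \leq y^{-(2-\varepsilon)} \, \E |X_x|^{2-\varepsilon},
\]
and Corollary~\ref{c:momentconvergence} provides convergence of the moment to $C^{1 - \varepsilon/2} \E[V_{\mathrm{crit}}^{1-\varepsilon/2}] \Gamma(2 - \varepsilon/2)$, which is finite since $1 - \varepsilon/2 \in (0,1)$. Hence $\E|X_x|^{2-\varepsilon} = O_\varepsilon(1)$ for all sufficiently large $x$, which gives the claimed upper bound.

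For the lower bound, Theorem~\ref{t:main} yields $X_x \xrightarrow{d} Z := \sqrt{C V_{\mathrm{crit}}}\, \mathcal{CN}(0,1)$; since $\{z \in \C : |z| > y\}$ is open, the Portmanteau theorem gives $\liminf_{x \to \infty} \p(|X_x| > y) \geq \p(|Z| > y)$. It therefore suffices to establish the tail lower bound $\p(|Z| > y) \gg_\varepsilon y^{-(2 + \varepsilon/2)}$ for $y \geq 2$, since then for $x$ sufficiently large in terms of $y$ we have $\p(|X_x| > y) \geq \tfrac{1}{2}\p(|Z| > y) \gg_\varepsilon y^{-(2+\varepsilon)}$. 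Using independence of $V_{\mathrm{crit}}$ and $\mathcal{CN}(0,1)$ together with $\p(|\mathcal{CN}(0,1)| > 1) > 0$,
\[
\p(|Z| > y) \;\geq\; \p\bigl(V_{\mathrm{crit}} > y^2/C\bigr)\, \p(|\mathcal{CN}(0,1)| > 1),
\]
so the task reduces to showing $\p(V_{\mathrm{crit}} > t) \gg_\varepsilon t^{-(1 + \varepsilon/4)}$ for all large $t$.

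The main obstacle is precisely this polynomial lower bound on the tail of $V_{\mathrm{crit}}$, which is not a consequence of Theorem~\ref{t:main} by itself. Such bounds are however standard for the total mass of a critical Gaussian multiplicative chaos measure; indeed the sharper asymptotic $\p(V_{\mathrm{crit}} > t) \sim c/t$ up to logarithmic corrections is known in the chaos literature. I would invoke this via the representation of $V_{\mathrm{crit}}$ supplied by Theorem~\ref{t:SWext}, either by direct citation of results on critical chaos, or by a short argument exploiting the Seneta--Heyde construction used to define $V_{\mathrm{crit}}$. I would isolate this tail lower bound as a separate lemma so that the proof of Corollary~\ref{c:heavytails} itself is essentially the two-line Markov plus Portmanteau argument above.
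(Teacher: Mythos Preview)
Your approach is correct and structurally close to the paper's. For the upper bound you apply Markov directly to $X_x$ via Corollary~\ref{c:momentconvergence}, whereas the paper first passes to the limit $\sqrt{CV_{\mathrm{crit}}}\,\mathcal{CN}(0,1)$ and then bounds the tail of the product using a short lemma (if $\p(U>t)\le Ct^{-q}$ and $\E V^q<\infty$ with $U,V$ independent, then $\p(UV>t)\le C\E[V^q]t^{-q}$). Your route is marginally more direct and in fact yields the upper bound uniformly in $y$ once $x$ is large, without needing $x$ large in terms of $y$.

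For the lower bound both arguments go through the distributional limit and reduce to a polynomial tail lower bound on $V_{\mathrm{crit}}$; the paper likewise cites a result from the chaos literature (Wong's tail asymptotic $\p(\int_A f\,\lambda(dt)>t)\sim c/t$ for critical chaos), so your plan is the right one. There is, however, one point you have glossed over: by Theorem~\ref{t:SWext}, $V_{\mathrm{crit}}$ is not the mass of a pure critical chaos measure but the limit of $\int_{-n}^{n} g(t)\,|1/2+it|^{-2}\,\lambda(dt)$, where $g$ is a positive random continuous function that is \emph{not} assumed independent of $\lambda$. Tail results for $\int\lambda(dt)$ therefore do not apply directly. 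The paper deals with this by first dropping to the integral over $[0,1]$ (by monotonicity in $n$), then bounding $g(t)\ge \|1/g\|_{L^\infty[0,1]}^{-1}$ pointwise, and finally using that $\|1/g\|_{L^\infty[0,1]}$ has moments of all orders to separate the random prefactor from the pure chaos mass $\int_0^1\lambda(dt)$ via $\p(A\cap B)\ge \p(A)-\p(B^c)$, at the cost of an arbitrarily small polynomial loss in the exponent. This is presumably the ``short argument'' you had in mind, but because $g$ and $\lambda$ may be dependent it is not quite automatic and should be spelled out.
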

This second corollary tells us that the tails of our partial sums are dominated by the tails of $\sqrt{V_{\mathrm{crit}}}$. Similar tail estimates for the full sum have been proven by~\citet[Corollary~2]{HarperLM}, but the above gives a more precise lower bound when $y$ is bounded.
\begin{remark} 
Our results handle a positive proportion of the full sum, seeing as the set $\{ 1 \leq n \leq x \colon P(n) > x^{1/2} \}$ has positive asymptotic density.
\end{remark}
\begin{remark}
We do not prove our main theorem in the Rademacher case owing to the covariance structure of $\Re \sum_{p \leq x} \frac{f(p)}{p^{1/2 + it}}$, which we discuss more in Section~\ref{s:SW Outline}. This obstruction can likely be overcome but lies outside the scope of this paper. We do, however, prove one of our main results (Theorem~\ref{t:variance}) in the Rademacher case, and this result may be of independent interest.
\end{remark}
\subsection{Previous work}
Since the inception of random multiplicative functions in~\cite{Wintner}, much research has been done to try and understand the almost sure size of the partial sums $\sum_{n \leq x} f(n)$, for $f$ a Rademacher or Steinhaus random multiplicative function. The upper bound was investigated in~\cite{Basquin, LTW, Mastrostefano, Wintner}, culminating in the recent work of~\citet{Caich}, who showed that, for any $\varepsilon > 0$, we almost surely have
\[ 
\Bigl| \sum_{n \leq x} f(n) \Bigr| \ll \sqrt{x} {(\log \log x)}^{3/4 + \varepsilon}, 
\]
for $f$ a Rademacher or Steinhaus random multiplicative function. For lower bounds~\citet{HarperLargeFluct} (improving on earlier work~\cite{HarperOmega}) has shown that, for any function $V(x)$ that diverges to infinity,
\[ 
\Bigl| \sum_{n \leq x} f(n) \Bigr| \geq \sqrt{x} \frac{{(\log \log x)}^{1/4}}{V(x)}, 
\]
for arbitrarily large $x$, almost surely, for $f$ a Rademacher or Steinhaus random multiplicative function. Note that~\citet{Mastrostefano} proved a matching upper bound for $\sum_{n \leq x, \, P(n) > \sqrt{x}} f(n)$, so this result is expected to reflect the true size of the fluctuations. \\
Another central problem has been to understand the expected size of the partial sums, $\E \bigl| \sum_{n \leq x} f(n) \bigr|$, for $f$ a Rademacher or Steinhaus random multiplicative function. It was conjectured by~\citet{Helson} that this quantity should enjoy ``better than square-root'' cancellation, in that $\E \bigl| \sum_{n \leq x} f(n) \bigr| = o(\sqrt{x})$. This conjecture was motivated by the relationship
\[ 
\lim_{T \rightarrow \infty} \frac{1}{T} \int_{0}^{T} \Bigl| \sum_{n \leq x} n^{it} \Bigr|^{2q} \, dt = \E \Bigl| \sum_{n \leq x} f(n) \Bigr|^{2q} ,
\]
that holds for $q > 0$. Seeing as the sums $\sum_{n \leq x} n^{it}$ are multiplicative analogues of Dirichlet kernels, which possess small $L^1$ norms, Helson conjectured that the $L^1$ norm of random multiplicative functions should also be small. The conjecture was the subject of much investigation~\cite{BondSeipExp, HarNikRadExp, WeberExp}
and, in 2020,~\citet{HarperLM} provided a positive answer to Helson's conjecture, showing that 
\begin{equation}\label{equ:btsrc}
\E \Bigl| \sum_{n \leq x} f(n) \Bigr| \asymp \frac{\sqrt{x}}{{(\log \log x)}^{1/4}}, 
\end{equation}
using an intricate probabilistic argument relating to the ballot problem. One of the key things noticed in this work was that the partial sums of random multiplicative functions are related to the theory of \emph{Gaussian multiplicative chaos}, an area of probability theory introduced by~\citet{Kahane} in the 1980s. \\
One of the biggest remaining open problems is to understand the \emph{distribution} of the partial sums $\sum_{n \leq x} f(n)$. It was first shown by~\citet{HarperDist} that the partial sums \emph{do not} satisfy a central limit theorem, in that $\frac{1}{\sqrt{x}} \sum_{n \leq x} f(n)$ does not converge in distribution to a (non-degenerate) Gaussian random variable, and, improving on previous work of~\citet{HoughLimitingDist}, Harper also showed that we do have a central limit theorem if we restrict to integers with few prime factors. Various other central limit theorems have been proven when one restricts to a special set of integers~\cite{ChatSoundCLT, SoundXu}, and also for exponential sums~\cite{BNR}. \\ 
In a recent pre-print,~\citet[Theorem~1.1]{GW} use martingale techniques to establish the first non-Gaussian convergence theorem for (appropriately weighted) partial sums of random multiplicative functions. Their result is quite general and applies to sums of the form $ {( {\E |\sum_{n \leq x} g(n) f(n)|^2} )}^{-1/2} \sum_{n \leq x} f(n) g(n)$ where $f$ is a Rademacher or Steinhaus random multiplicative function and $g$ is a multiplicative function with sparse support or good decay. These conditions exclude, for example, $g \equiv 1$, where one expects slightly different behaviour more in line with this paper (see the end of Section~\ref{s:GMC} for more on this). Indeed, our result is novel in the sense that our normalised partial sums \emph{are not} of the form $ {( {\E |\sum_{n \leq x} a_n f(n)|^2} )}^{-1/2} \sum_{n \leq x} a_n f(n)$ for some complex coefficients $a_n$, as in all previous works. Such sums are sensible to study in most cases, but are not the correct thing to study in the case of the full partial sums $\sum_{n \leq x} f(n)$, or some close analogue, as in our case. For example, a corollary of~\eqref{equ:btsrc} is that (in the Steinhaus case, say) $ {( {\E |\sum_{n \leq x} f(n)|^2} )}^{-1/2} \sum_{n \leq x} f(n) = \frac{1}{\lfloor \sqrt{x} \rfloor} \sum_{n \leq x} f(n) \xrightarrow{p} 0$, and to investigate the distribution of the partial sums one should instead consider $\frac{{(\log \log x)}^{1/4}}{\sqrt{x}} \sum_{n \leq x} f(n) $, or $ \frac{{(\log \log x)}^{1/4}}{\sqrt{x}} \sum_{n \leq x, \, P(n) > \sqrt{x}} f(n) $ in our case\footnote{If one normalises by anything smaller than $\frac{{(\log \log x)}^{1/4}}{\sqrt{x}}$, an application of Markov's inequality gives convergence in probability to zero. Furthermore, an adapted Payley--Zygmund inequality allows one to show $\p (\frac{{(\log \log x)}^{1/4}}{\sqrt{x}} \bigl| \sum_{n \leq x} f(n) \bigr| > c) \gg_c 1 $ for $c$ in a bounded range.}. This motivated~\citet[Conjecture~1.6]{GW} to conjecture that we should have
\[
\frac{{(\log \log x)}^{1/4}}{\sqrt{x}} \sum_{n \leq x} f(n) \xrightarrow{d} \sqrt{\tilde{V}_{\mathrm{crit}}} \, \mathcal{CN} (0,1) \quad \text{as } x \rightarrow \infty,
\]
where $\tilde{V}_{\mathrm{crit}}$ is a limiting object defined using partial Euler products. Theorem~\ref{t:main} provides evidence towards this conjecture and explicitly relates (our analogue of) $\tilde{V}_{\mathrm{crit}}$ to Gaussian multiplicative chaos. We remark that the recent work of~\citet{Caich} builds on the previous work of~\citet{Mastrostefano} on almost sure upper bounds for $\sum_{n \leq x, \, P(n) > \sqrt{x}} f(n) $, so handling this subsum has proved a useful step towards understanding the full sum.

\subsection{Key results}\label{s:intromc}
For $f$ a Steinhaus random multiplicative function, we begin by noting that
\[ 
\frac{{(\log \log x)}^{1/4}}{\sqrt{x}} \sum_{\substack{n \leq x \\ P(n) > \sqrt{x}}} f(n) = \frac{{(\log \log x)}^{1/4}}{\sqrt{x}} \sum_{\sqrt{x} < p \leq x} f(p) \sum_{m \leq x/p} f(m) .
\]
Conditioning on the innermost sum, the right-hand side is a sum of independent random variables whose distribution approximates a complex Gaussian with mean $0$ and variance $\frac{{(\log \log x)}^{1/2}}{x} \sum_{\sqrt{x} < p \leq x } \bigl| \sum_{m \leq x/p} f(m) \bigr|^2 $. This Gaussian approximation is the stage where restricting to integers $n$ with $P(n) > \sqrt{x}$ is important. One may suspect, by Parseval's theorem or Perron's formula, that the variance is roughly
\begin{equation}\label{equ:variances}
\frac{{(\log \log x)}^{1/2}}{\log x} \int_{\R} \biggl| \frac{F (1/2 + it)}{1/2+it} \biggr|^2 \, dt ,
\end{equation}
where $F (s) \coloneqq \prod_{p \leq \sqrt{x}} {\bigl( 1 - \frac{f(p)}{p^{s}} \bigr)}^{-1}$ is the partial Euler product associated to $f$. An analogous statement should hold in the Rademacher case, and in this case, we define $F (s) \coloneqq \prod_{p \leq \sqrt{x}} {\bigl( 1 + \frac{f(p)}{p^{s}} \bigr)}$ instead. Making this statement precise is a key step toward proving Theorem~\ref{t:main}, and constitutes the bulk of the paper. It turns out to be easier to prove a similar statement involving significantly shorter Euler products, showing that the distribution is controlled by the behaviour of $f(p)$ on relatively small primes. To state the theorem, we define the truncated Euler products $F_j (s) \coloneqq \prod_{p \leq \sqrt{x}^{e^{-j}}} {\bigl( 1 - \frac{f(p)}{p^s} \bigr)}^{-1}$ in the Steinhaus case and $F_j (s) \coloneqq \prod_{p \leq \sqrt{x}^{e^{-j}}} {\bigl( 1 + \frac{f(p)}{p^s} \bigr)}$ in the Rademacher case, so that $F_0 = F$. We prove the following:
\begin{theorem}\label{t:variance}
Let $f$ be a Steinhaus or a Rademacher random multiplicative function, let $C = \frac{e^{-\gamma} \log 2}{2\pi}$,  $y = \lfloor 100 \log \log \log x \rfloor$, and $B(x) = \sqrt{x}^{e^{-y}}$, so that $F_y (s)$ is a product over primes below $B(x)$. Then with probability $1 - O ( {(\log \log x)}^{-1/5} )$, we have
\[
\frac{{(\log \log x)}^{1/2}}{x} \sum_{\sqrt{x} < p \leq x} \bigl| \sum_{m \leq x/p} f(m) \bigr|^2 = \frac{C {(\log \log B(x))}^{1/2}}{\log B(x)} \int_{\R} \biggl| \frac{F_{y} (1/2 + it)}{1/2 + it} \biggr|^2 \, dt + E(x) ,
\]
where $E(x)$ satisfies $E(x) \ll {(\log \log x)}^{-1/5}$.
\end{theorem}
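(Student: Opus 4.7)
The plan is a three-stage reduction.

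\emph{First (PNT smoothing).} Writing $M(u) := \sum_{m \leq u} f(m)$, the prime number theorem yields
\[
\sum_{\sqrt x < p \leq x}|M(x/p)|^2 \approx \int_{\sqrt x}^x |M(x/t)|^2\,\frac{dt}{\log t} = x\int_1^{\sqrt x}\frac{|M(u)|^2}{u^2 \log(x/u)}\,du
\]
after the change of variable $u = x/t$. Crucially, for $u \leq \sqrt x$ we have $M(u) = \sum_{m \leq u,\,P(m) \leq \sqrt x} f(m)$, the partial sum of the Dirichlet coefficients of $F(s) = \prod_{p \leq \sqrt x}(1-f(p)/p^s)^{-1}$. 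The PNT error is controlled on a good event using Harper's moment bounds on $M(u)$.

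\emph{Second (Parseval).} Apply $\int_0^\infty |M(u)|^2 u^{-2}\,du = (2\pi)^{-1}\int_\R |F(1/2+it)|^2/(1/4+t^2)\,dt$, handling the $1/\log(x/u)$ weight via a dyadic decomposition of $[1,\sqrt x]$ (on each scale $\log(x/u)$ is essentially constant) together with a truncated Parseval on each piece. Using $\int_1^{\sqrt x} du/(u\log(x/u)) = \log 2$, the combined smoothing effectively replaces the weight by a $\log 2/\log\sqrt x$ factor, giving
\[
\text{LHS}\approx \frac{\log 2\cdot(\log\log x)^{1/2}}{2\pi \log \sqrt x}\int_\R \frac{|F(1/2+it)|^2}{1/4+t^2}\,dt+\text{error}.
\]

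\emph{Third (truncating $F$ to $F_y$).} Split $F = F_y G$ with $G(s):=\prod_{B(x) < p \leq \sqrt x}(1-f(p)/p^s)^{-1}$ (with the obvious sign flip for Rademacher), so that $G$ and $F_y$ are independent. By Mertens, $\E |G(1/2+it)|^2 = \prod_{B(x) < p \leq \sqrt x}(1-1/p)^{-1} \sim \log\sqrt x/\log B(x)$, independent of $t$. The target statement, to hold on a good event of probability $1 - O((\log\log x)^{-1/5})$, is
\[
\int_\R \frac{|F_y|^2|G|^2}{1/4+t^2}\,dt \approx \E|G|^2\cdot\int_\R \frac{|F_y|^2}{1/4+t^2}\,dt.
\]
Combining the three steps (and tracking the Dickman integral $\int_0^\infty\rho=e^\gamma$, which appears when comparing the typical $|F_y|^2$-integral against its expected smooth-number density) produces the advertised $C = e^{-\gamma}\log 2/(2\pi)$ and prefactor $(\log\log B(x))^{1/2}/\log B(x)$ (using $\log\log B(x) = \log\log x + O(\log\log\log x)$).

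\emph{Main obstacle.} The hardest step is the concentration in Step 3. Conditioning on $F_y$, it reduces to a variance estimate for $\int |F_y|^2|G|^2/(1/4+t^2)\,dt$, which involves $\mathrm{Cov}(|G(1/2+it)|^2,|G(1/2+it')|^2)$. Since $G$ ranges over $y\approx 100\log\log\log x$ log-log scales of primes, this covariance exhibits the logarithmic decay structure characteristic of critical Gaussian multiplicative chaos. I would treat it by further decomposing $G = \prod_j(F_{j-1}/F_j)$ over log-log scales and controlling each subproduct by martingale/second-moment arguments in the style of Harper's work on Helson's conjecture, intersecting a good event across all scales to obtain the $(\log\log x)^{-1/5}$ probability bound.
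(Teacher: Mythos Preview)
Your Step~2 contains a genuine error that propagates through the rest of the argument. The Parseval identity gives
\[
\int_{1}^{\infty}\frac{|M(u)|^{2}}{u^{2}}\,du=\frac{1}{2\pi}\int_{\R}\frac{|F(1/2+it)|^{2}}{1/4+t^{2}}\,dt,
\]
but your Step~1 delivers the \emph{truncated} weighted integral $\int_{1}^{\sqrt{x}}|M(u)|^{2}u^{-2}/\log(x/u)\,du$. Dyadic decomposition does not reduce one to a constant multiple of the other: in expectation the truncated integral is $\approx\log 2$, while your claimed right-hand side has expectation $\approx e^{\gamma}\log 2$ (since $\E|F(1/2+it)|^{2}\sim e^{\gamma}\log\sqrt{x}$ by Mertens). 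The missing factor of $e^{-\gamma}$ is precisely the contribution to the full Parseval integral from $u>\sqrt{x}$, where $M(u)$ counts $\sqrt{x}$-smooth numbers and Dickman asymptotics enter. You mention the Dickman integral, but you place it in Step~3 with a vague justification about ``smooth-number density''; in fact it is needed to repair Step~2, and doing so requires a separate concentration argument for the tail $\int_{\sqrt{x}}^{\infty}|M(u)|^{2}u^{-2}\,du$, which is not obviously easier than the original problem.

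The paper avoids this entirely by using Perron's formula rather than Parseval, producing a \emph{double} integral
\[
\iint \frac{F(1/2+it_{1})\overline{F(1/2+it_{2})}}{(1/2+it_{1})(1/2-it_{2})}\sum_{\sqrt{x}<p\leq x}\frac{x^{i(t_{1}-t_{2})}}{p^{1+i(t_{1}-t_{2})}}\,dt_{1}\,dt_{2}.
\]
The prime sum provides decay that localises the integral to the near-diagonal $|t_{1}-t_{2}|\leq(\log\log x)^{5}/\log x$; on this strip one replaces $F_{y}(1/2+it_{2})$ by $F_{y}(1/2+it_{1})$, and then the concentration step is not your $\int|F_{y}|^{2}|G|^{2}\approx\E|G|^{2}\int|F_{y}|^{2}$ but rather $I_{y}(t_{1})\overline{I_{y}(t_{2})}\approx\E[I_{y}(t_{1})\overline{I_{y}(t_{2})}]$ inside the double integral. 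This is a genuinely different (and more tractable) target, because the off-diagonal pairing and the prime-sum weight give extra cancellation. The $e^{-\gamma}$ then emerges cleanly from evaluating the resulting $u=t_{1}-t_{2}$ integral as a Perron integral for rough numbers. Your Step~3 sketch is in the right spirit (barrier events across scales are indeed what the paper uses, since naive second moments of $|F_{y}|^{2}|G|^{2}$ do blow up), but without the double-integral structure the concentration you would need is strictly harder than what the paper proves.
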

Previously it was only known (using arguments from~\cite{HarperLM}) that the low moments of these quantities should be of the same order of magnitude. Now, having shown that our variances concentrate around the mean square of an Euler product, we are in a hopeful position, since previous work of~\citet[Theorem~1.9]{SaksmanWebb} tells us that, when $t$ is restricted to a bounded interval, we have convergence in distribution to an integral with respect to a random measure known as a critical Gaussian multiplicative chaos measure. In the following theorem, we extend their result to give convergence over the full range of integration. To state our result, we redefine the Euler product over primes $p \leq x$ by $F_{(x)} (s) \coloneqq \prod_{p \leq x} {\bigl( 1 - \frac{f(p)}{p^{s}} \bigr)}^{-1}$ in the Steinhaus case. This is a notational convenience that makes the statement of our theorem more natural.
\begin{theorem}[Extension of Theorem~1.9 of~\citet{SaksmanWebb}]\label{t:SWext}
For $g \in C^l [a,b]$, denote $\| g \|_{C^l [a,b]} = \sum_{j=0}^l \| g^{(j)} \|_{L^{\infty} [a,b]}$. For $f$ a Steinhaus random multiplicative function, we have
\[ 
\frac{{(\log \log x )}^{1/2}}{\log x} \int_{\R} \biggl| \frac{F_{(x)} (1/2 + it)}{1/2 + it} \biggr|^2 \, dt \xrightarrow{d} V_{\mathrm{crit}} \quad \text{as } x \rightarrow \infty,
\]
where $\E [V_{\mathrm{crit}}^q]$ exists for each $0 < q < 1$. $V_{\mathrm{crit}}$ is defined as the limit (in distribution) of $V_n = \int_{-n}^n \frac{g(t)}{|1/2 + it|^2} \lambda (dt)$ as $n \rightarrow \infty$, where $g (t)$ is a positive random continuous function such that, for each $n > 0$, all norms $\| g \|_{C^l [-n,n]}$ and $\| 1/g \|_{C^l [-n,n]}$ possess moments of all orders, and $\lambda (dt)$ is a critical Gaussian multiplicative chaos measure. Note that $g$ and $\lambda$ may not be independent. The random variable $V_{\mathrm{crit}}$ is precisely the one that appears in Theorem~\ref{t:main}.
\end{theorem}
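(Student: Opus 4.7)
The plan is to reduce to the bounded-range convergence of~\citet{SaksmanWebb} by a truncation argument. For $T > 0$, set
\[
I_x^{(T)} \coloneqq \frac{(\log \log x)^{1/2}}{\log x} \int_{-T}^{T} \Bigl|\frac{F_{(x)}(1/2 + it)}{1/2 + it}\Bigr|^2 \, dt,
\]
and let $I_x$ denote the analogous quantity with integration over all of $\R$. Applied with the positive continuous weight $|1/2+it|^{-2}$ on $[-T,T]$, Theorem~1.9 of~\cite{SaksmanWebb} yields $I_x^{(T)} \xrightarrow{d} V_T \coloneqq \int_{-T}^{T} g(t) |1/2+it|^{-2} \lambda(dt)$ as $x \to \infty$ for each fixed $T$, where $g$ and $\lambda$ are the positive random function and the critical Gaussian multiplicative chaos measure from their theorem. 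Since the integrand is positive, $V_T$ is almost surely non-decreasing in $T$, so $V_{\mathrm{crit}} \coloneqq \lim_{T \to \infty} V_T$ exists as a random variable with values in $[0,\infty]$, matching the distributional limit description in the statement.

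By the converging-together lemma, it then suffices to establish (a) $V_{\mathrm{crit}} < \infty$ a.s.\ with $\E V_{\mathrm{crit}}^q < \infty$ for $0 < q < 1$ (which gives $V_T \xrightarrow{d} V_{\mathrm{crit}}$), and (b) the uniform tail estimate
\[
\lim_{T \to \infty} \limsup_{x \to \infty} \p \bigl( I_x - I_x^{(T)} > \varepsilon \bigr) = 0 \quad \text{for every } \varepsilon > 0.
\]
The \emph{main obstacle} is (b). A first-moment approach fails, because $\E |F_{(x)}(1/2+it)|^2 \asymp \log x$ gives $\E I_x \asymp (\log\log x)^{1/2} \to \infty$, reflecting the criticality of the problem. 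Instead, I would estimate the fractional moment $\E(I_x - I_x^{(T)})^{1/2}$ uniformly in $x$ by adapting the ``better than square-root cancellation'' arguments of~\cite{HarperLM}: their central local estimate $\E\bigl(\int_{t_0}^{t_0+1}|F_{(x)}(1/2+it)|^2 \, dt\bigr)^{1/2} \ll \log x / (\log \log x)^{1/4}$ should extend uniformly in $t_0$ throughout the range where the Euler product still exhibits critical fluctuations, and combining this local bound with the dyadic decay $\int_{|t|>T}|1/2+it|^{-2} \, dt \ll 1/T$ should give $\E(I_x - I_x^{(T)})^{1/2} \ll T^{-\alpha}$ uniformly in $x$, for some $\alpha > 0$. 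For the regime where $|t|$ is so large that the Euler product is no longer effectively critical, the naive second-moment bound $\E|F_{(x)}(1/2+it)|^2 \asymp \log x$ together with the weight is already more than sufficient.

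With (b) in hand, applying Fatou's lemma to $\E(I_x^{(T)})^{1/2}$ yields a uniform-in-$T$ bound on $\E V_T^{1/2}$, and the monotone convergence theorem then gives $V_{\mathrm{crit}} < \infty$ a.s.\ with finite fractional moments of every order below $1$, establishing (a). Combining (a), (b), and the bounded-range convergence $I_x^{(T)} \xrightarrow{d} V_T$ completes the proof. The only non-routine step is the uniform fractional-moment tail bound, which requires care in transferring Harper's one-dimensional critical moment estimates to the integrated, weighted setting at hand, in particular to track uniformity in the base point $t_0$ as the effective length of the critical Euler product changes with $|t|$.
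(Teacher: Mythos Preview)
Your overall architecture is the same as the paper's: truncate to $[-T,T]$, invoke Saksman--Webb on each bounded interval, and then assemble the full-line limit via a converging-together lemma with a fractional-moment tail estimate. The paper uses the $2/3$-moment rather than the $1/2$-moment, and obtains the limit $V_{\mathrm{crit}}$ via tightness plus monotonicity rather than Fatou, but these are cosmetic differences.

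The one place you are working harder than necessary is your ``main obstacle'' (b). You worry about tracking uniformity in the base point $t_0$ and about how ``the effective length of the critical Euler product changes with $|t|$''. In the Steinhaus case this concern evaporates: for any fixed $m$, the law of $\bigl(|F_{(x)}(1/2+it)|\bigr)_{t \in [m-1/2,\,m+1/2]}$ is the same as that of $\bigl(|F_{(x)}(1/2+it)|\bigr)_{t \in [-1/2,\,1/2]}$, because $(f(p)p^{-im})_p$ has the same distribution as $(f(p))_p$. The paper exploits this translation invariance in law directly: after decomposing the tail into unit intervals $[m-1/2,m+1/2]$ and pulling out the weight $1/m^2$, every block has the \emph{same} fractional moment, namely
\[
\E\biggl(\frac{(\log\log x)^{1/2}}{\log x}\int_{-1/2}^{1/2}|F_{(x)}(1/2+it)|^2\,dt\biggr)^{2/3},
\]
which is bounded uniformly in $x$ by Harper's critical moment estimate at a single fixed window. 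Summing $\sum_{|m|>T}m^{-4/3}$ then gives $\E(I_x-I_x^{(T)})^{2/3}\ll T^{-1/3}$ with no further work. So your plan is correct, but the step you flagged as non-routine is in fact routine once you use translation invariance; no adaptation of Harper's argument to varying $t_0$ is needed.
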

For details of this weak convergence, see Section~\ref{s:distribofvar}, and more specifically Section~\ref{s:completionoffullintconv}. An analogous theorem is not known in the Rademacher case (and would involve non-trivial adjustments to the argument). For this reason, we do not give a proof of Theorem~\ref{t:main} in the Rademacher case. 

\subsection{Connection to Gaussian multiplicative chaos}\label{s:GMC}
We now explain the connection between our variance (which we think of as mean squares of random Euler products like~\eqref{equ:variances}) and the theory of Gaussian multiplicative chaos, restricting ourselves to the Steinhaus case. For an introduction to the theory, see~\citet{RhodesVargas}. A similar discussion can be found in the introduction of~\citet{HarperLM}. Here, we put a particular emphasis on understanding the limiting distribution of our variances. \\
We begin by noting that, for fixed $t$, the quantity $\log |F (1/2 + it)| \approx \Re \sum_{p \leq \sqrt{x}} \frac{f(p)}{p^{1/2+it}} $ is distributed like a Gaussian with mean $0$ and variance approximately $ \frac{1}{2} \sum_{p \leq \sqrt{x}} \frac{1}{p} = \frac{1}{2} \log \log x + O(1)$. Consequently, we think of $X(t) = \log | F(1/2 + it)|$ as a Gaussian field. The reader is reminded that a key property of Gaussian fields is that their distribution is completely determined by their means, $\E [X(t)]$ (which will be zero in the Steinhaus case), and their covariances, $\E [ X(t_1) X(t_2) ]$. With this in mind, we rewrite our variances in~\eqref{equ:variances} as $\frac{{(\log \log x)}^{1/2}}{\log x} \int_{0}^{1} e^{2 X(t)} \, dt $. For simplicity, we have restricted ourselves to a compact interval and ignored the denominator. It is natural to normalise our integral so that the integrand has expectation one, and, if $X(t)$ were actually Gaussian for each $t \in [0,1]$, the correct\footnote{This follows from the moment generating function for Gaussian random variables.} renormalisation would be to multiply the integrand by $e^{- 2 \E {X(t)}^2}$. This accounts for the factor of $\frac{1}{\log x}$ in~\eqref{equ:btsrc}, so our variances are roughly
\begin{equation}\label{equ:varianceswithexp}
{(\log \log x)}^{1/2} \int_{0}^{1} \exp \Bigl( 2 X(t) - 2 \E \bigl( {X(t)}^2 \bigr) \Bigr) \, dt ,
\end{equation}
where $X(t) = \log |F(1/2 + it)|$. Broadly speaking, Gaussian multiplicative chaos is the study of random measures
\begin{equation*}
\mu_\gamma (dt) = \exp \Bigl( \gamma \tilde{X} (t) - \frac{\gamma^2}{2} \E \bigl({\tilde{X} (t)}^2\bigr) \Bigr) \, dt ,
\end{equation*}
where $\gamma \in \R$, $\mathcal{D} \subseteq \R$ is some domain, and ${\bigl( \tilde{X} (t) \bigr)}_{t \in \mathcal{D}}$ is Gaussian field with a certain type of \emph{logarithmic} covariance structure. Note that these measures are similarly normalised so that $\E \mu_\gamma (dt) = dt$. It happens that $X(t) = \log |F (1/2 + it)|$ does possess an approximately logarithmic covariance structure, in that
\begin{align*}
\E \bigl[ X(t_1) X(t_2) \bigr] &\approx \sum_{p \leq \sqrt{x}} \frac{\cos\bigl( (t_1 - t_2) \log p \bigr)}{2p},
\end{align*}
which approaches $-\frac{1}{2}\log |t_1 - t_2|$ when $x$ is large and when $\frac{1}{\log x} \leq |t_1 - t_2| \leq 1$, say. This is (up to scaling) the covariance structure that is central in the study of Gaussian multiplicative chaos, so the theory can be used to analyse~\eqref{equ:varianceswithexp}. From a number theoretic standpoint, the logarithmic covariance structure is a manifestation of the fact that $p^{it} = e^{it \log p}$ rotate at different speeds for different primes $p$. Let $\mathcal{K} = \lfloor \log \log x \rfloor$, and write
\[ 
\log |F (1/2 + it)| \approx \Re \Biggl( \sum_{1 \leq k \leq \mathcal{K}} \sum_{\sqrt{x}^{e^{-(k+1)}} < p \leq \sqrt{x}^{e^{-k}}} \frac{f(p)}{p^{1/2 + it}} \Biggr) . 
\]
For the largest $k$, the innermost sum is roughly fixed as $t$ varies over bounded intervals. However, for the smallest $k$, the innermost sum is roughly fixed over intervals of length $\approx \frac{1}{\log x}$. Thus, there are $\log \log x$ ``scales'', and on the scale $k$, the innermost sum ``takes'' $\frac{2^{k}}{\log x}$ values as $t$ varies over $[-1,1]$, say. An important idea underlying the proof of Theorem~\ref{t:variance} is that the values of $f(p)$ on small primes dictate the bulk of the distribution of our sum, seeing as the average over $t$ has little effect on them. For more about $\log |F (1/2 + it)|$ and logarithmically correlated fields, see the survey article of~\citet{BaileyKeatingLogCorr}. \\
Theorem~\ref{t:SWext} makes explicit the relation between our variances and Gaussian multiplicative chaos and specifically involves a random measure known as the~\emph{critical} Gaussian multiplicative chaos measure. We do not explicitly define this object in this paper, but to gain some insight into its behaviour, we consider the more general expression~\eqref{equ:varianceswithexp} and describe how the parameter $\gamma$ influences the behaviour of
\begin{equation}\label{equ:critparam}
\int_{0}^1 \exp \Bigl( \gamma X(t) - \frac{\gamma^2}{2} \E \bigl( {X(t)}^2 \bigr) \Bigr) \, dt ,
\end{equation}
where $X(t) = \log |F(1/2 + it)|$. When $\gamma = 2$ this is precisely~\eqref{equ:varianceswithexp} without the factor ${(\log \log x)}^{1/2}$. Assuming that $X(t)$ is distributed like a Gaussian with mean $0$ and variance $\frac{1}{2} \log \log x$, say, one can find that the dominant contribution to the expected value of~\eqref{equ:critparam} comes from points $t$ where $X(t) = \frac{\gamma}{2} \log \log x + O(\sqrt{\log \log x}) $. Note that~\citet[Theorem~1.2]{ABH} have shown that
\[ 
\max_{t \in [0,1]} \log | F (1/2 + it)| = \log \log x - \frac{3}{4} \log \log \log x + o(\log \log \log x), 
\]
with probability $1-o(1)$, as $x \rightarrow \infty$. Subsequently, when $\gamma = 2$, the dominant contribution comes from the most extreme values, and the analysis of this case inherits the name critical Gaussian multiplicative chaos. Similar ideas suggest that for $\gamma > \gamma_c = 2$ we have convergence in probability to zero, since then we do not expect to find any points in $[0,1]$ for which $X(t) > \frac{\gamma}{2} \log \log x$. Thus, in our case, one can think of a critical Gaussian multiplicative chaos measure as the limiting measure supported on $t$ where $\log |F(1/2+it)|$ takes its largest values. However, as we have mentioned, to obtain a non-trivial limiting distribution when $\gamma = 2$, one needs to multiply~\eqref{equ:critparam} by ${(\log \log x)}^{1/2}$. This factor, which we initially motivated using the work of~\citet{HarperLM}, can also be motivated by the \emph{derivative martingale} (see~\citet{DupBerRemSheVar}, for example). Since it can be shown that
\[
M(\gamma) \coloneqq \int_{0}^{1} \exp \Bigl( \gamma \log |F (1/2+it)| - \frac{\gamma^2}{2} \E \bigl( \log |F(1/2+it)|^2 \bigr) \Bigr) \, dt
\]
almost surely converges to a non-zero quantity for $\gamma < 2$, and almost surely converges to zero for $\gamma \geq 2$, we have a (negative) jump in the limiting object at $\gamma = 2$. Thus, one may expect that the negative of the derivative at $\gamma = 2$ may be non-trivial in the limit, leading one to the study of
\[ 
-M'(2) = \int_{0}^{1} \bigl( 2 \E \bigl( {X(t)}^2\bigr) - X(t) \bigr) \exp \Bigl( 2 X(t) - 2 \E \bigl( {X(t)}^2 \bigr) \Bigr) \, dt,
\]
where $X(t) = \log |F(1/2 + it)|$. We previously saw that, without this pre-factor, the dominant contribution to the expected size of $M(2)$ comes from points $t \in [0,1]$ where $ X(t) = \log \log x + O ({(\log \log x)}^{1/2})$. Assuming these points still give the dominant contribution, and seeing as $2 \E [ {X(t)}^2 ] = \log \log x + O(1)$, the factor we have introduced increases the probable size of $M(2)$ by a factor of ${(\log \log x)}^{1/2}$, suggesting that this is the correct renormalisation. For a survey of critical chaos, see~\citet{PowellCGMC}. \\ 
We finish this section by remarking that the recent work of~\citet{GW} handles certain subcritical $(\gamma < 2)$ regimes, and so they do not need to renormalise their partial sums non-trivially to obtain a non-trivial distribution. See also~\citet{ArgHarKisHighPoints} for subcritical results about random Euler products.

\section{Outline of the proof of Theorem~\ref{t:variance} and structure of the paper}\label{s:outline}
In this section, we describe the organisation of the paper and sketch the proof of Theorem~\ref{t:variance}. Sections~\ref{s:nttools} and~\ref{s:probtools} will collect useful number theoretic and probabilistic tools, respectively. In Section~\ref{s:normalapprox}, we perform our conditional Gaussian approximation in the Steinhaus case, which follows from a result of~\citet{HarLam}. As mentioned, this delivers a complex Gaussian with zero mean and variance
\[ 
\frac{{(\log \log x)}^{1/2}}{x} \sum_{\sqrt{x} < p \leq x } \bigl| \sum_{m \leq x/p} f(m) \bigr|^2 .
\]
In Section~\ref{s:varianceanalysis}, we prove Theorem~\ref{t:variance}, which involves performing numerous approximations to this variance. We outline the key ideas here, restricting ourselves to the Steinhaus case (though the theorem is also proved in the Rademacher case). We note that a similar analysis has previously been conducted by~\citet[Section~3.2]{HarperLargeFluct}. First of all, an application of Perron's formula allows us to find that our variance is roughly
\[
\frac{{(\log \log x)}^{1/2}}{4 \pi^2} \int_{-x^{1/3}}^{x^{1/3}} \int_{- x^{1/3}}^{x^{1/3}} \frac{F (1/2 + i t_1) \conj{F (1/2 + i t_2)}}{(1/2 + i t_1)(1/2 - i t_2)} \sum_{\sqrt{x} < p \leq x} \frac{x^{i (t_1 - t_2)}}{p^{1+i(t_1-t_2)}} \, d t_1 \, d t_2 .
\]
The contribution to the above integral from points where $t_1 \not \approx t_2$ is small with high probability due to the decorrelation of the Euler products and the decay from the prime number sum. This allows us to reduce the above integral to points where $t_1 = t_2 + O \bigl( \frac{{(\log \log x)}^5}{\log x} \bigr)$. Over this reduced range, the contribution from the small primes to the Euler products $F (1/2 + i t_1)$ and $F (1/2 + i t_2)$ is roughly the same. Specifically, with $y = \lfloor 100 \log \log \log x \rfloor$, we have $\frac{F_y (1/2 + it_1)}{1/2 + it_1} \approx \frac{F_y (1/2 + it_2)}{1/2 + it_2}$. Therefore, if we define $ I_y (t) \coloneqq \prod_{\sqrt{x}^{e^{-y}} < p \leq \sqrt{x}} {\bigl( 1 - \frac{f(p)}{p^{1/2 + it}} \bigr)}^{-1}$ in the Steinhaus case, we find that, with high probability, our variance is roughly
\[ 
\frac{{(\log \log x)}^{1/2}}{4 \pi^2} \iint_{\substack{|t_1 - t_2| \leq \frac{{(\log \log x)}^{5}}{\log x} \\ |t_1|, |t_2| \leq {(\log \log x)}^4}} \frac{|F_{y} (1/2 + i t_1)|^2 I_y (t_1) \conj{I_y (t_2)} }{|(1/2 + it_1)|^2} \sum_{\sqrt{x} < p \leq x} \frac{x^{i(t_1 - t_2)}}{p^{1 + i (t_1 - t_2)}} \, d t_1 \, d t_2 ,
\]
where we have reduced the range of $t_1$ and $t_2$ using the decay in the denominator. Getting to this stage is the contents of Propositions~\ref{p:perron} and~\ref{p:smallprimes}. \\ 
Having obtained the integral of a mean square of an Euler product, we are closer to being able to apply Theorem~\ref{t:SWext} to deduce the distribution. However, our expression still involves the tails of the Euler products, $I_y (t_1)$ and $I_y (t_2)$, which we wish to remove. The key observation that allows us to do this is to notice that, as $t_1$ and $t_2$ vary, the larger primes rotate quickly. This suggests that we should typically have $I_y (t_1) \conj{I_y (t_2)} \approx \E I_y (t_1) \conj{I_y (t_2)}$ over our domain of integration, and an application of Chebyshev's inequality allows us to show that this is the case. This is the content of Proposition~\ref{p:smallprimesconc}, and, for it to succeed, we make use of barrier events from~\cite{HarperLargeFluct} (see~\cite[Section~3]{HarperMomentsIII} for an explanation of these). In this step, it is essential for $y$ to be relatively small (certainly $o (\log \log x)$), seeing as the behaviour of $f(p)$ on small primes \emph{does} influence the distribution of our variance. After replacing $I_y (t_1) \conj{I_y (t_2)}$ by $\E I_y (t_1) \conj{I_y (t_2)}$, we integrate over $t_2$ and find that our variance concentrates around
\[ 
\frac{e^{-\gamma} \log 2}{2 \pi} \frac{{({\log \log B(x)})}^{1/2}}{\log B(x)} \int_{\R} \biggl| \frac{F_{y} (1/2 + i t_1)}{1/2 + it_1} \biggr|^2 d t_1 , 
\] 
with high probability, where $B(x) \coloneqq \sqrt{x}^{e^{-y}}$. This is the content of Proposition~\ref{p:roughestapplication}. Combining the propositions in Section~\ref{s:varianceanalysis} delivers Theorem~\ref{t:variance}. \\
In Section~\ref{s:distribofvar}, we prove Theorem~\ref{t:SWext}. As mentioned, this is an extension of the work of~\citet[Theorem~1.9]{SaksmanWebb} to the weighted integral over the real line. We outline the main ideas of their argument at the beginning of Section~\ref{s:distribofvar}. To extend their result, we employ a tightness argument for integrals over compact domains followed by an approximation lemma that gives convergence in distribution for the full integral over the real line. \\ 
In Section~\ref{s:completion}, we use Proposition~\ref{p:steincna} and Theorems~\ref{t:variance} and~\ref{t:SWext} to prove Theorem~\ref{t:main}. We also prove Corollaries~\ref{c:momentconvergence} and~\ref{c:heavytails}.

\section{Preliminary number theory results}\label{s:nttools}
In this section, we collect some standard number theoretic results that will be used throughout the paper. The first two estimates give mean and pointwise bounds for sums involving prime numbers, respectively:
\begin{lemma}[Mean value over large primes]\label{l:meansqlargeprimes}
 Let $\Lambda (n)$ denote the von Mangoldt function. Uniformly for any sequence of complex numbers ${(a_n)}_{n=1}^{\infty}$ and any $T \geq 1$, we have
\[ 
\int_{-T}^{T} \biggl| \sum_{T^{1.01} \leq n \leq x} \frac{a_n \Lambda (n)}{n^{1+it}} \biggr|^2 \, dt \ll \sum_{T^{1.01} \leq n \leq x} \frac{| a_n |^2 \Lambda (n)}{n}.
\] 
\end{lemma}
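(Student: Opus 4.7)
The plan is to apply the Montgomery--Vaughan mean value theorem for Dirichlet polynomials. Setting $b_n \coloneqq a_n \Lambda(n)/n$, the integrand is $|D(t)|^2$ for the Dirichlet polynomial $D(t) = \sum_{T^{1.01} \leq n \leq x} b_n n^{-it}$, an exponential polynomial with frequencies $-\log n$. Since consecutive integers satisfy $|\log(n+1) - \log n| \asymp 1/n$, the minimum-spacing parameter in Montgomery--Vaughan is $\delta_n^{-1} \asymp n$, and the theorem delivers
\[
\int_{-T}^{T} |D(t)|^2 \, dt \ll \sum_{T^{1.01} \leq n \leq x} |b_n|^2 (T + n).
\]

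The hypothesis $n \geq T^{1.01}$ is precisely calibrated so that $T \leq n^{1/1.01} \ll n$, hence $T + n \asymp n$, reducing the right-hand side to $\sum_n |b_n|^2 n = \sum_n |a_n|^2 \Lambda(n)^2/n$. To reach the cleaner form stated in the lemma, I would then exploit the fact that the sum is supported on prime powers: writing $n = p^k$ gives $\Lambda(n) = \log p$ and $\log n = k \log p$, so $\Lambda(n)^2/\log n = \Lambda(n)/k \leq \Lambda(n)$. Combined with a prime-number-theorem count of prime powers inside short intervals around $n$ (yielding an extra saving of $1/\log n$ over the general integer count, which is what would be used in a Fourier-analytic smooth-cutoff variant of the Montgomery--Vaughan argument), the extra factor of $\Lambda(n)$ is absorbed cleanly.

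I do not anticipate any substantive obstacle: the content of the lemma is essentially packaged into the choice of cutoff $T^{1.01}$, which ensures that the ``diagonal'' term $T$ in Montgomery--Vaughan is dominated by the ``off-diagonal'' gap contribution $n$. Everything else is a one-line deduction from standard tools, with the prime-power support of $\Lambda$ providing the extra $\log n$ saving needed to obtain $\Lambda(n)/n$ on the right-hand side rather than the crude $\Lambda(n)^2/n$.
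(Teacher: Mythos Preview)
The paper does not give its own proof but simply cites Number Theory Result~1 of Harper's large fluctuations paper. Your approach is essentially the same as the one given there: a smooth majorant of $\mathbf{1}_{[-T,T]}$ whose Fourier transform has compact support (or rapid decay) turns the mean value into a sum over pairs of prime powers $m,n$ with $|m-n|\ll n/T$, and Brun--Titchmarsh (not the full prime number theorem, which would be overkill and unavailable in such short intervals) bounds the number of such prime powers by $\ll n/(T\log n)$ once $n\ge T^{1.01}$ forces $n/T \ge n^{0.01/1.01}$. This gives $\sum_n |a_n|^2 \Lambda(n)^2/(n\log n)$, and your observation $\Lambda(n)^2/\log n \le \Lambda(n)$ finishes it.

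One point of presentation worth tightening: the opening paragraph applies the standard Montgomery--Vaughan inequality with spacing $\delta_n^{-1}\asymp n$, arriving at $\sum |a_n|^2 \Lambda(n)^2/n$, and only afterwards invokes the smooth-cutoff variant as a separate device to recover the missing $1/\log n$. These are not two steps to be combined; the second replaces the first. The naive $\delta_n^{-1}\asymp n$ bound (sharp for twin primes) cannot be improved after the fact, so one must run the smoothed argument from the outset and never invoke the bare spacing version. Also, since the sum is supported on prime powers rather than all integers, the phrase ``consecutive integers satisfy $|\log(n+1)-\log n|\asymp 1/n$'' is not quite the relevant computation, though the conclusion $\delta_n^{-1}\ll n$ remains valid.
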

\begin{proof}
This is Number Theory Result 1 of~\citet{HarperLargeFluct}.
\end{proof} 
\begin{lemma}[Pointwise bound for sums over primes]\label{l:pwprimebound}
For any $100 \leq x \leq y$, $t \neq 0$, we have
\[ 
\biggl| \sum_{x < p \leq y} \frac{1}{p^{1+it}} \biggr| \leq \frac{3}{|t| \log x} + O \Bigl( \bigl( 1 + |t| \bigr)e^{-c \sqrt{\log x}} \Bigr), 
\]
for some positive constant $c > 0$.
\end{lemma}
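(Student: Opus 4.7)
My plan is to reduce the sum over primes to a smooth integral via partial summation with the Chebyshev function, and then to pick up the main and error terms separately using the Prime Number Theorem with classical error term $\theta(u) - u \ll u e^{-c_0 \sqrt{\log u}}$. Concretely, I write
\[
\sum_{x < p \leq y} \frac{1}{p^{1+it}} = \int_x^y \frac{d\theta(u)}{u^{1+it} \log u} = \int_x^y \frac{du}{u^{1+it} \log u} + \int_x^y \frac{d(\theta(u)-u)}{u^{1+it} \log u},
\]
so the task reduces to bounding the two pieces separately.

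For the main term $\int_x^y u^{-1-it}(\log u)^{-1} du$, the plan is a straightforward integration by parts, pairing $h(u)=1/\log u$ against the antiderivative $u^{-it}/(-it)$ of $u^{-1-it}$. This produces a boundary contribution of modulus at most $\frac{1}{|t|\log x} + \frac{1}{|t|\log y} \leq \frac{2}{|t|\log x}$ (using $y \geq x$) and a remainder $\frac{1}{it}\int_x^y u^{-1-it}(\log u)^{-2} du$ of modulus at most $\frac{1}{|t|}\bigl(\frac{1}{\log x} - \frac{1}{\log y}\bigr) \leq \frac{1}{|t|\log x}$. Summing, this already gives the claimed $\frac{3}{|t| \log x}$.

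For the error term, another integration by parts turns $\int_x^y (u^{1+it}\log u)^{-1} d(\theta(u)-u)$ into a boundary term $\bigl[(\theta(u)-u)/(u^{1+it}\log u)\bigr]_x^y$ plus $\int_x^y (\theta(u)-u)\, \frac{d}{du}\bigl(u^{-1-it}(\log u)^{-1}\bigr)\, du$. The derivative has size $O((1+|t|)u^{-2}(\log u)^{-1})$, and inserting $|\theta(u)-u| \ll u e^{-c_0 \sqrt{\log u}}$ reduces everything to the integral $\int_x^y u^{-1}(\log u)^{-1} e^{-c_0 \sqrt{\log u}}\, du$. The substitution $v = \sqrt{\log u}$ turns this into $2\int_{\sqrt{\log x}}^{\sqrt{\log y}} v^{-1} e^{-c_0 v}\, dv \ll (\log x)^{-1/2} e^{-c_0 \sqrt{\log x}}$. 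Combining with the $(1+|t|)$ factor and the boundary term (which is even smaller) yields a total contribution of $O\bigl((1+|t|) e^{-c\sqrt{\log x}}\bigr)$ for any $c < c_0$, completing the proof.

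There is no real obstacle here; the only thing to watch is that the constant in front of $1/(|t|\log x)$ comes out to exactly $3$, which forces me to handle the secondary integration-by-parts term in the main piece rather than discarding it, and to ensure that the $y$-dependent boundary contributions are absorbed by the $1/\log x$ term using monotonicity.
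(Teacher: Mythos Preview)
Your argument is correct. The decomposition via $\theta(u)$, the two integrations by parts, and the bookkeeping with the constant $3$ all go through exactly as you describe; the substitution $v=\sqrt{\log u}$ cleanly handles the error integral, and the $(\log x)^{-1/2}$ factor you obtain is harmlessly absorbed into the choice of $c<c_0$.

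The paper itself does not give a proof: it simply records that the lemma ``follows from Number Theory Result 2 of~\citet{HarperLargeFluct}''. So rather than taking a different route, you have supplied the standard partial-summation-plus-PNT argument that the cited reference (and indeed any textbook derivation of this bound) would use. Your write-up is self-contained where the paper's is not, which is a minor gain in readability; conversely, the paper's citation implicitly covers slightly more general statements (e.g.\ sums weighted by $\Lambda$ rather than over primes, or shifted abscissae) that your direct computation would need to redo. For the purposes of this lemma as stated, your proof is complete and there is nothing to add.
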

\begin{proof}
This follows from Number Theory Result 2 of~\citet{HarperLargeFluct}.
\end{proof}
To prove Proposition~\ref{p:roughestapplication}, we will utilise the following lemma:
\begin{lemma}[Perron integral for rough numbers]\label{l:roughintest}
Suppose that $B(x)$ is a function satisfying $\frac{\log x}{{(\log \log x)}^{1000}} \ll \log B(x) \ll \frac{\log x}{{(\log \log x)}^{2}}$, say. Let $W(x)$ be a function satisfying ${(\log \log x)}^2 \leq W(x) \leq {(\log x)}^{1/3}$ when $x$ is large. Then
\[ 
\int_{|u| \leq \frac{W(x)}{\log x}} \prod_{B(x) < q \leq \sqrt{x}} {\biggl(1 - \frac{1}{q^{1+iu}} \biggr)}^{-1} \sum_{\sqrt{x} < p \leq x} \frac{x^{iu}}{p^{1+iu}} \, du = \frac{2 \pi e^{-\gamma} \log 2}{\log B(x)} \biggl( 1 + O \biggl( \frac{1}{\log \log x} \biggr) \biggr) ,
\]
where the product over $q$ runs over primes, and $\gamma$ is the Euler--Mascheroni constant. Furthermore, an identical estimate holds if 
\[ 
\prod_{B(x) < q \leq \sqrt{x}} {\biggl(1 - \frac{1}{q^{1+iu}} \biggr)}^{-1} 
\]
is replaced by 
\[
\prod_{B(x) < q \leq \sqrt{x}} {\biggl(1 + \frac{1}{q^{1+iu}} + O \biggl( \frac{1}{q^{3/2}} \biggr) \biggr)} . 
\]
\end{lemma}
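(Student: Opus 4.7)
The plan is to expand the Euler product as a Dirichlet series, interchange summation and integration, and recognise the resulting expression as a smoothed Perron-type integral whose value is approximately $2\pi A(x)/x$, where $A(x)$ counts a specific family of integers up to $x$. First I would expand
\[
\prod_{B(x) < q \leq \sqrt{x}} \bigl(1 - q^{-1-iu}\bigr)^{-1} = \sum_{m} \frac{c_m}{m^{1+iu}},
\]
with $c_m = 1$ iff every prime factor of $m$ lies in $(B(x), \sqrt{x}]$. Combining with the sum over large primes, the integrand becomes $\sum_n a_n (x/n)^{iu}/n$, where $a_n = 1$ iff $n = mp$ with $p$ prime in $(\sqrt{x}, x]$ and $m$ supported in $(B(x), \sqrt{x}]$; since $p$ is forced to be the largest prime factor of $n$, this decomposition is unique, hence $a_n \in \{0,1\}$. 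The double series is absolutely convergent on the bounded $u$-range (as $\sum c_m/m$ is a finite product of geometric series), so Fubini yields
\[
I(x) := \int_{|u| \leq W(x)/\log x} F(u) G(u) \, du = \sum_{n} \frac{a_n}{n} \cdot \frac{2 \sin\bigl( T \log(x/n) \bigr)}{\log(x/n)}, \qquad T := \frac{W(x)}{\log x}.
\]

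Next I would show by Fourier/Perron analysis that $I(x) \approx 2\pi A(x)/x$, where $A(x) = \sum_{n \leq x} a_n$. The Dirichlet kernel $2\sin(Tv)/v$ has total integral $2\pi$ and concentrates on scale $|v| \lesssim 1/T = \log x / W(x)$. Comparing the sum with the smoothed density $\rho(y) := A(y)/y$, the main contribution is
\[
\int_{-\infty}^{\infty} \rho\bigl(xe^{-v}\bigr) \cdot \frac{2\sin(Tv)}{v} \, dv \approx 2\pi \rho(x),
\]
provided $\rho(y)$ is approximately constant on the effective range $|\log(x/y)| \lesssim 1/T$. A direct count using the $A(x)$ computation below shows $\rho(y) = \rho(x)(1 + O(1/\log \log x))$ for $y \in [x^{1 - 1/\log\log x}, x^{1 + 1/\log\log x}]$, which comfortably contains the effective range since $1/T \leq \log x / (\log\log x)^2$. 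The tail of the kernel decays only like $1/v$, so the contribution from $n$ far from $x$ is only conditionally integrable and must be treated by integration by parts (Abel summation against the oscillating kernel) using the explicit structure of $A(y)$.

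To compute $A(x)$, I would use the decomposition
\[
A(x) = \sum_{\sqrt{x} < p \leq x} \Phi\bigl( x/p, \, B(x) \bigr), \qquad \Phi(z, B) := \#\{m \leq z : P^{-}(m) > B\}.
\]
The hypothesis $\log B(x) \ll \log x / (\log \log x)^2$ ensures $\log(x/p) / \log B(x) \geq \tfrac{1}{2}(\log \log x)^2$ throughout the main range of $p$ (the primes $p > x^{1 - 1/\log\log x}$ contribute at most $O(x / (\log B(x) \log\log x))$, absorbed into the error). For $z = x/p$ in this range, Mertens' theorem together with the Buchstab function tending to $e^{-\gamma}$ gives $\Phi(z, B(x)) = (z e^{-\gamma}/\log B(x))(1 + O(1/\log \log x))$. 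Applying Mertens' first theorem, $\sum_{\sqrt{x} < p \leq x} 1/p = \log 2 + O(1/\log x)$, yields
\[
A(x) = \frac{x \, e^{-\gamma} \log 2}{\log B(x)} \bigl( 1 + O(1/\log \log x) \bigr),
\]
which produces the claimed main term $\frac{2\pi e^{-\gamma} \log 2}{\log B(x)}$.

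For the alternative product $\prod (1 + q^{-1-iu} + O(q^{-3/2}))$, the geometric series expansion gives $(1 - q^{-1-iu})^{-1} = 1 + q^{-1-iu} + O(q^{-2})$, so both products agree per factor up to a multiplicative error $1 + O(q^{-3/2})$. The ratio of the two full products is $\exp\bigl( \sum_{q > B(x)} O(q^{-3/2}) \bigr) = 1 + O(B(x)^{-1/2})$ uniformly in $u$, which is negligible compared to the $O(1/\log\log x)$ error, and the integral is unaffected at the stated accuracy. The main obstacle throughout is the rigorous Perron-type step, specifically the tail analysis of the oscillatory kernel $\sin(Tv)/v$ against the density $A(y)/y$: the slow decay forces one to exploit cancellation (Abel summation / integration by parts together with the slowly-varying nature of $A(y)/y$ outside the main range), rather than to rely on absolute bounds.
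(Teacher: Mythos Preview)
Your overall strategy matches the paper's: both reduce the integral to $2\pi A(x)/x$ with $A(x)=\sum_{\sqrt{x}<p\le x}\Phi(x/p,B(x))$, evaluate $A(x)$ via the Buchstab/Mertens asymptotic $\Phi(z,B)\sim e^{-\gamma}z/\log B$ and $\sum_{\sqrt{x}<p\le x}1/p=\log 2+O(1/\log x)$, and handle the alternative product by the ratio estimate $\exp\bigl(O(\sum_{q>B(x)}q^{-3/2})\bigr)=1+O(B(x)^{-1/2})$. These parts are essentially identical.

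The divergence is in the Perron step itself, and the paper's device neatly dissolves exactly the ``main obstacle'' you flag. Rather than work directly with the short-range kernel $2\sin(Tv)/v$ and fight its $1/v$ tail via Abel summation, the paper first inserts a factor $\tfrac{1}{1+iu}$ into the integrand at cost $O((\log x)^{-2/3})$ (harmless since $|u|\le(\log x)^{-2/3}$), then \emph{extends} the range of integration from $|u|\le W(x)/\log x$ to $|u|\le(\log x)^2$. The extension is controlled by splitting into dyadic blocks and using the pointwise bound $\bigl|\sum_{\sqrt{x}<p\le x}p^{-1-iu}\bigr|\ll 1/(|u|\log x)$ together with $\bigl|\prod_{B<q\le\sqrt{x}}(1-q^{-1-iu})^{-1}\bigr|\ll \log x/\log B(x)$ on each block, giving a total contribution $\ll 1/((\log\log x)\log B(x))$. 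Once the range is $|u|\le(\log x)^2$ and the weight $1/(1+iu)$ is in place, the integral is literally a truncated Perron integral $\tfrac{1}{2\pi i}\int_{1-iT}^{1+iT}\prod(1-q^{-s})^{-1}(x/p)^s\,ds/s$ with $T=(\log x)^2$, and the standard error terms (Montgomery--Vaughan, Theorem~5.2/Corollary~5.3) give the count $\Phi(x/p,B(x))$ directly with a clean remainder. This bypasses any delicate cancellation argument against the slowly-decaying kernel; your Abel-summation route should work, but the weight-and-extend trick is both shorter and more robust.
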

\begin{proof}
Assuming that the lemma holds with $\prod_{B(x) < q \leq \sqrt{x}} {\bigl(1 - \frac{1}{q^{1+iu}} \bigr)}^{-1} $, the last statement follows from the elementary estimate
\begin{align*}
\prod_{B(x) < q \leq \sqrt{x}} {\biggl(1 + \frac{1}{q^{1+iu}} + O \biggl( \frac{1}{q^{3/2}} \biggr) \biggr)} = \biggl( 1 + O \biggl( \frac{1}{{B(x)}^{1/2}} \biggr) \biggr) \prod_{B(x) < q \leq \sqrt{x}} {\biggl(1 - \frac{1}{q^{1+iu}} \biggr) }^{-1}.
\end{align*}
To prove the first statement, we notice that the integral in the lemma is approximately (the average of) a count of rough numbers: numbers whose prime factors are all larger than some threshold. To show this, we first introduce a weight $\frac{1}{1 + iu}$. By our conditions on $W(x)$, we have $\frac{1}{1 + iu} = 1 + O \bigl( \frac{1}{{(\log x)}^{2/3}} \bigr)$. Hence the left-hand side of the first equation in Lemma~\ref{l:roughintest} is
\[ 
\int_{|u| \leq \frac{W(x)}{\log x}} \prod_{B(x) < q \leq \sqrt{x}} {\biggl(1 - \frac{1}{q^{1+iu}} \biggr)}^{-1} \sum_{\sqrt{x} < p \leq x} \frac{x^{iu}}{p^{1+iu}} \frac{du}{1+iu} + O \biggl( \frac{1}{{(\log x)}^{4/3}} \exp \biggl( \sum_{B(x) < q \leq \sqrt{x}} \frac{1}{q} \biggr) \biggr).
\]
The error term here is $\ll \frac{1}{{(\log x)}^{1/3} \log B(x)}$.
We now extend the range of integration by showing that the integral over $\frac{W(x)}{\log x} \leq |u| \leq {(\log x)}^2$ is small. Seeing as we will break up the integral and apply the triangle inequality, we can obtain an upper bound by assuming that $W(x) = {(\log \log x)}^2$. We split the integral over $|u| \in [\frac{W(x)}{\log x}, {(\log x)}^2]$ into $\ll \log \log x$ dyadic ranges, and note that, for $M \in [\frac{{(\log \log x)}^2}{\log x} , {(\log x)}^2]$, we have 
\begin{align*} 
& \int_M^{2M} \prod_{B(x) < q \leq \sqrt{x}} {\biggl(1 - \frac{1}{q^{1+iu}} \biggr)}^{-1} \sum_{\sqrt{x} < p \leq x} \frac{x^{iu}}{p^{1+iu}} \frac{du}{1+iu} \\ 
\ll & \frac{1}{M \log x} \int_M^{2M} \biggl| \prod_{B(x) < q \leq x^{1/{(\log \log x)}^2}} {\biggl(1 - \frac{1}{q^{1+iu}} \biggr)}^{-1} \prod_{x^{1/{(\log \log x)}^2} \leq q \leq \sqrt{x}} {\biggl(1 - \frac{1}{q^{1+iu}} \biggr)}^{-1} \biggr| \, du \\
\ll & \frac{1}{M \log x}\int_M^{2M} \frac{\log x}{{(\log \log x)}^2 \log B(x)} \exp \biggl( \frac{{(\log \log x)}^2}{M \log x} \biggr) \, du \ll \frac{1}{{(\log \log x)}^2 \log B(x)},
\end{align*}
where to obtain the second and third lines we have applied Lemma~\ref{l:pwprimebound} in addition to standard prime number estimates. An identical bound holds over the negative range, so that adding up the dyadic blocks, the contribution from $\frac{{(\log \log x)}^2}{\log x} \leq |u| \leq {(\log x)}^2$ is $\ll \frac{1}{(\log \log x) \log B(x)}$. Therefore, the integral in our lemma is equal to
\[ 
\int_{|u| \leq {(\log x)}^2} \prod_{B(x) < q \leq \sqrt{x}} {\biggl(1 - \frac{1}{q^{1+iu}} \biggr)}^{-1} \sum_{\sqrt{x} < p \leq x} \frac{x^{iu}}{p^{1+iu}} \frac{du}{1+iu} + O \biggl( \frac{1}{(\log \log x)\log B(x)} \biggr) .
\]
Rearranging, this is
\[ 
\frac{2 \pi}{x} \sum_{\sqrt{x} < p \leq x} \frac{1}{2 \pi i} \int_{1 - i {(\log x)}^2}^{1 + i {(\log x)}^2} \prod_{B(x) < q \leq \sqrt{x}} {\biggl(1 - \frac{1}{q^{s}} \biggr)}^{-1} {\biggl( \frac{x}{p} \biggr)}^{s} \frac{ds}{s} + O \biggl( \frac{1}{(\log \log x)\log B(x)} \biggr) .
\]
By~\citet[Theorem~5.2 and Corollary~5.3]{MVmultnt}, we find that, for $\sqrt{x} < p \leq x$, we have
\begin{multline*} 
\frac{1}{2 \pi i} \int_{1 - i {(\log x)}^2}^{1 + i {(\log x)}^2} \prod_{B(x) < q \leq \sqrt{x}} {\biggl(1 - \frac{1}{q^{s}} \biggr)}^{-1} {\biggl( \frac{x}{p} \biggr)}^{s} \frac{ds}{s} = \sum_{\substack{n \leq x/p \\ q|n \Rightarrow q > B(x)}} 1 \\ 
+ O \biggl( 1 + \sum_{\substack{x/2p < n \leq 2x/p \\ n \neq x/p}} \min \biggl\{ 1, \frac{x}{p {(\log x)}^2 |x/p - n|} \biggr\} + \frac{x}{p {(\log x)}^2} \sum_{n \colon p|n \Rightarrow p \in ( B(x), \sqrt{x}) } \frac{1}{n} \biggr) .
\end{multline*}
Using the fact that $\frac{\log x}{{(\log \log x)}^{1000}} \ll \log B(x)$, this error term is
\begin{align*}
& \ll 1 + \sum_{\substack{|n - x/p| \leq x/p {(\log x)}^2 \\ n \neq x/p}} 1 + \frac{x}{p {(\log x)}^2} \sum_{\substack{|n - x/p| > x/p {(\log x)}^2 \\ x/2p < n \leq 2x/p}} \frac{1}{|x/p - n|} + \frac{x {(\log \log x)}^{1000}}{p {(\log x)}^2} \\
& \ll 1 + \frac{x {(\log \log x)}^{1000}}{p {(\log x)}^2}.
\end{align*}
Summing this over $\sqrt{x} < p \leq x$ and dividing through by $x$, this error contributes $\ll \frac{1}{(\log \log x) \log B(x)}$. We deduce that the integral on the left-hand side of Lemma~\ref{l:roughintest} is equal to
\begin{align*}
& \frac{2\pi}{x} \sum_{\sqrt{x} < p \leq x} \sum_{\substack{n \leq x/p \\ q|n \Rightarrow q > B(x)}} 1 + O \biggl( \frac{1}{(\log \log x)\log B(x)} \biggr) \\
= & \frac{2\pi}{x} \sum_{\sqrt{x} < p \leq x} \Phi \biggl( \frac{x}{p}, B(x) \biggr) + O \biggl( \frac{1}{(\log \log x)\log B(x)} \biggr),
\end{align*}
where $\Phi(x,y) \coloneqq \big| \bigl\{ n \leq x \colon p|n \Rightarrow p > y \bigr\} \bigr|$ is the count of $y$-rough numbers below $x$. We have the standard sieve upper bound (see~\cite[Corollary~I.4.13]{TenenbaumAPNT}) $\Phi(x,y) \ll \frac{x}{\log y}$ whenever $y \leq x$. Applying this bound, the contribution to the main term from large primes $\frac{x}{{B(x)}^{2\log \log x}} < p \leq x$ is
\begin{multline*}
\frac{2 \pi}{x} \biggl( \sum_{\frac{x}{{B(x)}^{2 \log \log x}} < p \leq \frac{x}{B(x)}} \Phi \biggl( \frac{x}{p}, B(x) \biggr) + \sum_{\frac{x}{B(x)} < p \leq x} 1 \biggr) \\
\ll \frac{1}{\log B(x)} \sum_{\frac{x}{{B(x)}^{2 \log \log x}} < p \leq \frac{x}{B(x)}} \frac{1}{p} + \frac{1}{\log x} \ll \frac{\log \log x}{\log x} ,
\end{multline*}
which is $ \ll \frac{1}{(\log \log x) \log B(x)}$, and so is engulfed into the previous error term. Thus our integral is equal to
\begin{equation}\label{equ:intasav}
\frac{2\pi}{x} \sum_{\sqrt{x} < p \leq \frac{x}{{B(x)}^{2 \log \log x}}} \Phi \biggl( \frac{x}{p}, B(x) \biggr) + O \biggl( \frac{1}{(\log \log x) \log B(x)} \biggr) . 
\end{equation}
It follows from~\cite[Theorem~III.6.1]{TenenbaumAPNT} and Mertens' third theorem that the first term is
\[ 
\frac{2 \pi e^{- \gamma}}{\log B(x)} \sum_{\sqrt{x} < p \leq \frac{x}{{B(x)}^{2 \log \log x}}} \frac{1}{p} + O \Biggl( \frac{1}{{\bigl( \log B(x) \bigr)}^2} + \frac{1}{x} \sum_{\sqrt{x} < p \leq \frac{x}{{B(x)}^{2 \log \log x}}} \Psi(x/p, B(x)) \Biggr) ,
\]
where $\Psi(x, y) \coloneqq \# \{ n \leq x \colon P(n) \leq y \}$ is the count of y-smooth numbers below $x$. With $u = \log x / \log y$, we have the classical bound $\Psi(x, y) \ll x e^{-u/2} $ (see~\cite[Theorem~III.5.1]{TenenbaumAPNT}). Applying this bound and Mertens' second theorem,~\eqref{equ:intasav} is
\[
\frac{2 \pi e^{- \gamma} \log 2}{\log B(x)} \biggl( 1 + O \biggl( \frac{1}{\log \log x} \biggr) \biggr) + O \Biggl( \sum_{\sqrt{x} < p \leq \frac{x}{{B(x)}^{2 \log \log x}}} \frac{1}{p} e^{- \log (x/p) / 2 \log B(x)} \Biggr) .
\]
It also follows from Mertens' second theorem that the second ``big Oh'' term is
\[ 
\ll \sum_{\sqrt{x} < p \leq \frac{x}{{{B(x)}^{2 \log \log x}}}} \frac{1}{p \log x} \ll \frac{1}{\log x} \ll \frac{1}{(\log \log x) \log B(x)}.
\]
We conclude that
\[ 
\int_{|u| \leq \frac{W(x)}{\log x}} \prod_{B(x) < q \leq \sqrt{x}} {\biggl(1 - \frac{1}{q^{1+iu}} \biggr)}^{-1} \sum_{\sqrt{x} < p \leq x} \frac{x^{iu}}{p^{1+iu}} \, du = \frac{2 \pi e^{-\gamma} \log 2}{\log B(x)} \biggl( 1 + O \biggl( \frac{1}{\log \log x} \biggr) \biggr) ,
\]
as required.
\end{proof}

\section{Preliminary results on random multiplicative functions}\label{s:probtools}
\subsection{Expectation results} 
We begin this section by collecting some standard results on random multiplicative functions. We also introduce a new result, Lemma~\ref{l:largeprimeav}, which we use to prove one of our key results: Proposition~\ref{p:smallprimesconc}.
\begin{lemma}[Expectation of sums]\label{l:expectation}
Let $f$ be a Rademacher or Steinhaus random multiplicative function. For any $k \in \N$, $a_n \in \C$, we have 
\[ 
\E \Bigl| \sum_{n \leq N} a_n f(n) \Bigr|^{2k} \leq {\biggl( \sum_{n \leq N} \tau_{2k-1} (n) |a_n|^2 \biggr)}^k , 
\]
where $\tau_k$ denotes the $k$-divisor function, $\tau_k (n) = \# \{(b_1,\ldots,b_k) \colon b_1 b_2 \ldots b_k = n, \, b_i \in \N \}$.
\end{lemma}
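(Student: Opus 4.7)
The plan is to expand the $2k$-th moment as a multi-sum, exploit the multiplicative structure of $f$ together with an orthogonality relation to restrict to tuples satisfying a matching condition, and then apply Cauchy--Schwarz combined with the submultiplicativity of the generalised divisor function.

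For the Steinhaus case I would set $F \coloneqq \sum_{n \leq N} a_n f(n)$ and use complete multiplicativity to write
\[
F^k = \sum_M b_M f(M), \qquad b_M \coloneqq \sum_{\substack{n_1, \ldots, n_k \leq N \\ n_1 \cdots n_k = M}} a_{n_1} \cdots a_{n_k}.
\]
The Steinhaus orthogonality $\E[f(M) \overline{f(M')}] = [M = M']$ then gives $\E |F|^{2k} = \E |F^k|^2 = \sum_M |b_M|^2$. Since $b_M$ has at most $\tau_k(M)$ summands, Cauchy--Schwarz yields $|b_M|^2 \leq \tau_k(M) \sum_{n_1 \cdots n_k = M} |a_{n_1}|^2 \cdots |a_{n_k}|^2$. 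Summing over $M$, re-indexing by $(n_1, \ldots, n_k)$, and invoking the submultiplicativity $\tau_k(n_1 \cdots n_k) \leq \tau_k(n_1) \cdots \tau_k(n_k)$ (which on prime powers reduces to the elementary inequality $\binom{a+b+k-1}{k-1} \leq \binom{a+k-1}{k-1} \binom{b+k-1}{k-1}$) decouples the sum into $\bigl(\sum_n \tau_k(n) |a_n|^2\bigr)^k$, which is dominated by $\bigl(\sum_n \tau_{2k-1}(n) |a_n|^2\bigr)^k$ because $\tau_k \leq \tau_{2k-1}$. In the Rademacher case the same strategy applies, but $f$ is only multiplicative on squarefree integers and satisfies $f(p)^2 = 1$; one therefore groups $F^k = \sum_{K \text{ sqfree}} f(K) c_K$ by the squarefree part $K$ of $n_1 \cdots n_k$, uses Rademacher orthogonality to get $\E|F|^{2k} = \sum_K |c_K|^2$, and then bounds each $|c_K|^2$ by Cauchy--Schwarz. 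This time the count of squarefree $k$-tuples with a prescribed squarefree-part-of-product produces the larger divisor function $\tau_{2k-1}$ in place of $\tau_k$, after which the same submultiplicativity step concludes.

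The main obstacle is the Rademacher counting step: because distinct squarefree tuples can share the same squarefree part of their product (through the absorption of perfect-square factors), the count requires more care than the direct $\tau_k(M)$ tally available in the Steinhaus case. The rest of the argument in both cases is a routine application of Cauchy--Schwarz together with the submultiplicativity and monotonicity of generalised divisor functions.
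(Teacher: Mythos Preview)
The paper does not give its own proof here; it simply cites Probability Result~2.3 of Harper's high-moments paper. Your Steinhaus argument is correct and is exactly the standard one found in that reference.

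Your Rademacher sketch, however, has a genuine gap precisely at the step you flag as the main obstacle. Applying Cauchy--Schwarz to $c_K$ yields $|c_K|^2 \le N_K \sum_{\mathbf n} |a_{n_1}|^2\cdots|a_{n_k}|^2$, where $N_K$ is the number of $k$-tuples of squarefree integers $\le N$ whose product has squarefree part $K$. But $N_K$ is not controlled by any divisor function of $K$ alone: already for $K=1$ and $k=2$ it equals the number of squarefree integers up to $N$, since every diagonal pair $(n,n)$ contributes. So the assertion that ``the count of squarefree $k$-tuples with a prescribed squarefree-part-of-product produces the larger divisor function $\tau_{2k-1}$'' cannot be correct as stated, and this route does not close. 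A correct treatment of the Rademacher case (as in the cited reference) must organise the $2k$-fold sum differently---avoiding the grouping purely by the squarefree kernel $K$---so that the relevant combinatorial count becomes a genuine bounded divisor-type quantity in the variables $n_i$ themselves rather than the unbounded $N_K$.
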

\begin{proof}
This is a case of Probability Result 2.3 of~\citet{HarperHM}.
\end{proof}
\begin{lemma}[Steinhaus Euler product expectation]\label{l:epstein}
If $f$ is a Steinhaus random multiplicative function, then for any real $300 \leq x \leq y$, say, and any $t , t' \in \R$, we have
\begin{multline*} 
\E \prod_{x < p \leq y} \biggl| 1 - \frac{f(p)}{p^{1/2 + i t}} \biggr|^{-1} \biggl| 1 - \frac{f(p)}{p^{1/2 + i t'}} \biggr|^{-1} \\ 
\ll \sqrt{\frac{\log y}{\log x} \biggl( 1 + \min \biggl\{ \frac{\log y}{\log x}, \frac{1}{|t - t'| \log x} + \frac{\log^2 \bigl(2 + |t - t'|\bigr)}{\log x} \biggr\} \biggr)},
\end{multline*}
and
\begin{multline*} 
\E \prod_{x < p \leq y} \biggl| 1 - \frac{f(p)}{p^{1/2 + i t}} \biggr|^{-2} \biggl| 1 - \frac{f(p)}{p^{1/2 + i t'}} \biggr|^{-2} \\ 
\ll {\Biggl( \min \biggl\{ \frac{\log y}{\log x} , 1 + \frac{{\bigl( | t - t' | \bigr)}^{1/100}}{\log x} \biggr\} \Biggr)}^4 {\biggl( \frac{\log y}{\log x} \biggr)}^2 {\biggl( 1 + \min \biggl\{ \frac{\log y}{\log x} , \frac{1}{|t - t'| \log x} \biggr\} \biggr)}^2 .
\end{multline*}
\end{lemma}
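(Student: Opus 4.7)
The plan is to exploit independence of the $(f(p))_p$ to factorise the expectation over primes, and then estimate each single-prime factor carefully. Writing $f(p) = e^{i\theta_p}$ with $\theta_p$ uniform on $[0, 2\pi)$, independence gives
\[
\E\prod_{x<p\leq y}\biggl|1 - \frac{f(p)}{p^{1/2+it}}\biggr|^{-\alpha}\biggl|1 - \frac{f(p)}{p^{1/2+it'}}\biggr|^{-\alpha} = \prod_{x<p\leq y}\E\biggl[\biggl|1 - \frac{f(p)}{p^{1/2+it}}\biggr|^{-\alpha}\biggl|1 - \frac{f(p)}{p^{1/2+it'}}\biggr|^{-\alpha}\biggr]
\]
for $\alpha \in \{1, 2\}$. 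For the $\alpha = 2$ bound, I would expand $|1-z|^{-2} = \sum_{a,b \geq 0}z^a\conj{z}^b$ and use the orthogonality $\E[f(p)^j\conj{f(p)}^k] = \mathbb{1}[j=k]$ to obtain the single-prime factor exactly as $\sum_{n\geq 0} p^{-n}\sin^2((n+1)(t-t')\log p/2)/\sin^2((t-t')\log p/2)$, by squaring out the geometric sum $|\sum_{a=0}^n e^{ia(t'-t)\log p}|^2$. For the $\alpha = 1$ bound, starting from $|1-z|^{-1} = \exp(\Re\sum_{k\geq 1}z^k/k)$ and applying $\cos A + \cos B = 2\cos(\frac{A+B}{2})\cos(\frac{A-B}{2})$ to the $k=1$ terms, integration in $\theta_p$ produces the modified Bessel function $I_0(2\cos((t-t')\log p/2)/\sqrt{p})$, whose Taylor expansion combined with crude bounds on the $k \geq 2$ tail gives
\[
\E\biggl[\biggl|1 - \frac{f(p)}{p^{1/2+it}}\biggr|^{-1}\biggl|1 - \frac{f(p)}{p^{1/2+it'}}\biggr|^{-1}\biggr] = 1 + \frac{1 + \cos((t-t')\log p)}{2p} + O(p^{-3/2}).
\]

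Taking logarithms of the product over $x < p \leq y$, the $\alpha = 1$ case yields $\tfrac{1}{2}\sum_{x<p\leq y}1/p + \tfrac{1}{2}\Re\sum_{x<p\leq y}p^{-1-i(t-t')} + O(1)$. Mertens' theorem handles the first sum as $\log(\log y/\log x) + O(1/\log x)$, producing the $\sqrt{\log y/\log x}$ factor after exponentiation. I would bound the oscillating sum two ways: trivially by $\sum 1/p \leq \log(\log y/\log x) + O(1)$, and via Lemma~\ref{l:pwprimebound} applied with $t \mapsto t-t'$, giving $\tfrac{3}{|t-t'|\log x} + O((1+|t-t'|)e^{-c\sqrt{\log x}})$. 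For very large $|t-t'|$ the second error term is unfavourable and must be replaced by a standard PNT-based refinement (e.g.\ using classical bounds on $\sum(\log p)/p^{1+i(t-t')}$) that delivers the cleaner $\log^2(2+|t-t'|)/\log x$ term. Taking the minimum of these estimates and exponentiating (using $e^{u/2} \ll \sqrt{1+u}$ for $u$ in the relevant range) yields the first claimed bound. For the $\alpha = 2$ case, an analogous but more intricate analysis of the exact series isolates the $n = 0$ and $n = 1$ terms, which respectively produce $(\log y/\log x)^2$ and $(1 + \min\{\ldots\})^2$ factors from the same Mertens/Lemma~\ref{l:pwprimebound} machinery; the extra factor $(\min\{\log y/\log x, 1 + |t-t'|^{1/100}/\log x\})^4$ emerges from controlling the $n \geq 2$ tail via the uniform estimate $\sin^2((n+1)\phi/2)/\sin^2(\phi/2) \leq (n+1)^2$, combined with a short-interval/$L^2$ averaging of $\sin^2((t-t')\log p/2)$ across primes when $|t-t'|$ is large.

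The main obstacle will be obtaining estimates that are uniform across the full range of $|t-t'|$, especially in the transitional regime where $|t-t'|\log x$ is of order one (so that neither the trivial bound nor the Lemma~\ref{l:pwprimebound} bound dominates cleanly), and in the regime where $|t-t'|$ grows polynomially or faster in $x$, where the $(1+|t-t'|)e^{-c\sqrt{\log x}}$ term in Lemma~\ref{l:pwprimebound} must be sidestepped by auxiliary classical prime sum estimates to recover the slowly-growing $\log^2(2+|t-t'|)$ factor. A secondary difficulty is the $\alpha = 2$ case: the exact four-factor series has delicate cross-term cancellations, so some care is needed to extract the dominant $n \in \{0, 1\}$ contributions while verifying that the higher-order tail does not spoil the main estimate, particularly in the exponent $1/100$ term, which looks like the output of a somewhat loose interpolation between the trivial bound and the optimal oscillation estimate.
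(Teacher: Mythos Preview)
Your outline is essentially the standard derivation of these Euler product bounds and is correct in substance; note, however, that the paper does not actually prove this lemma but simply quotes it from Harper's earlier work (Euler Product Result~1 of~\cite{HarperLargeFluct} for the first inequality, and Euler Product Result~2.1 of~\cite{HarperHM}, with a minor modification borrowed from Euler Product Result~2.2 there, for the second). So you are not comparing against a proof in the paper so much as reconstructing the arguments of the cited sources.

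Your reconstruction matches those arguments closely. Factorising over primes by independence, computing the single-prime factor to accuracy $1 + (1+\cos((t-t')\log p))/(2p) + O(p^{-3/2})$ (the vanishing of the $O(p^{-1})$ correction relies on $\E f(p)^2 = \E f(p)^3 = 0$ in the Steinhaus case, which you should make explicit), and then summing via Mertens plus an oscillatory prime-sum bound is exactly how the first inequality is obtained. Your identification of the problematic regime---large $|t-t'|$ where the $(1+|t-t'|)e^{-c\sqrt{\log x}}$ error in Lemma~\ref{l:pwprimebound} is useless---is accurate, and the fix is indeed a sharper partial-summation estimate using the classical zero-free region, which produces the $\log^2(2+|t-t'|)/\log x$ term. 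For the $\alpha=2$ case, the Fej\'er-kernel structure you describe is correct; the $(\min\{\log y/\log x,\,1+|t-t'|^{1/100}/\log x\})^4$ factor in the cited result is deliberately loose (the exponent $1/100$ is not sharp) and arises from bounding contributions of the form $\exp(O(\sum_{x<p\leq y} p^{-1}\cos(2(t-t')\log p)))$ coming from the $k=2$ terms, so your description of it as ``a somewhat loose interpolation'' is apt.

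In short: nothing is wrong with your plan, and it would reproduce the cited lemmas; but for the purposes of this paper a citation suffices.
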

\begin{proof}
This is Euler Product Result 1 of~\cite{HarperLargeFluct} followed by a special case of Euler Product Result 2.1 of~\cite{HarperHM}, where we replaced the term $e^{O (1 + |t - t'|/{(\log x)}^{100})}$ there by the first term on the right-hand side. This can be done by the same considerations as in Euler Product Result 2.2 of~\cite{HarperHM}.
\end{proof}
\begin{lemma}[Rademacher Euler product expectation]\label{l:eprad}
If $f$ is a Rademacher random multiplicative function, then for any real $300 \leq x \leq y$, say, and any $t , t' \in \R$, we have
\begin{multline*}
\E \prod_{x < p \leq y} \biggl| 1 + \frac{f(p)}{p^{1/2 + i t}} \biggr| \biggl| 1 + \frac{f(p)}{p^{1/2 + i t'}} \biggr| \\
\ll \sqrt{\frac{\log y}{\log x} \biggl( 1 + \min \biggl\{ \frac{\log y}{\log x}, \frac{1}{|t - t'| \log x} + \frac{1}{|t + t'| \log x} + \frac{\log^2 \bigl(2 + |t| + |t'|\bigr)}{\log x} \biggr\} \biggr)},
\end{multline*}
and 
\begin{multline*}
\E \prod_{x < p \leq y} \biggl| 1 + \frac{f(p)}{p^{1/2 + i t}} \biggr|^2 \biggl| 1 + \frac{f(p)}{p^{1/2 + i t'}} \biggr|^2 \ll {\Biggl( \min \biggl\{ \frac{\log y}{\log x} , 1 + \frac{{\bigl( | t | + | t' | \bigr)}^{1/100}}{\log x} \biggr\} \Biggr)}^4 \\ 
\times {\biggl( \frac{\log y}{\log x} \biggr)}^2 {\biggl( 1 + \min \biggl\{ \frac{\log y}{\log x} , \frac{1}{|t + t'| \log x} \biggr\} \biggr)}^2 {\biggl( 1 + \min \biggl\{ \frac{\log y}{\log x} , \frac{1}{|t - t'| \log x} \biggr\} \biggr)}^2 .
\end{multline*}
\end{lemma}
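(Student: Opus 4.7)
The plan is to mirror the proof template used for the Steinhaus case (Lemma~\ref{l:epstein}), which is itself Euler Product Result~1 from~\cite{HarperLargeFluct} followed by a special case of Euler Product Result~2.1 from~\cite{HarperHM}. The only algebraic difference, but the source of every additional term in the statement, is that in the Rademacher case $f(p)$ is real and satisfies $f(p)^2 = 1$ identically, whereas in the Steinhaus setting one has $\E f(p)^2 = 0$. Consequently cross terms of the shape $f(p)^2 p^{-1-i(t+t')}$ and $f(p)^2 p^{-1-2it}$ no longer vanish in expectation, and these are what generate the $|t+t'|$-dependence and the $\log^2(2+|t|+|t'|)/\log x$ contribution appearing in the bounds.

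By independence across primes the expectation factorises as $\prod_{x < p \leq y} \E_p$. For the first bound I would take logarithms and Taylor expand: using the identity $|1 + f(p)/p^{1/2+it}|^2 = 1 + 1/p + 2 f(p)\cos(t\log p)/p^{1/2}$ yields
\[
\log \bigl|1 + f(p)/p^{1/2+it}\bigr| = \frac{f(p)\cos(t\log p)}{p^{1/2}} - \frac{\cos(2t\log p)}{2p} + O(p^{-3/2}).
\]
Adding the analogous identity for $t'$, exponentiating, and computing the resulting single-prime expectation via $\E \exp(f(p)A/p^{1/2}) = \cosh(A/p^{1/2}) = 1 + A^2/(2p) + O(p^{-2})$, with $A = \cos(t\log p) + \cos(t'\log p)$, produces (after product-to-sum identities)
\[
\E_p = 1 + \frac{1}{2p}\Bigl(1 + \cos\bigl((t-t')\log p\bigr) + \cos\bigl((t+t')\log p\bigr) - \tfrac{1}{2}\cos(2t\log p) - \tfrac{1}{2}\cos(2t'\log p)\Bigr) + O(p^{-3/2}).
\]
Taking logarithms, summing over $x < p \leq y$, and exponentiating, the constant term contributes $\frac{1}{2}\log(\log y/\log x) + O(1)$ to the exponent, yielding the overall prefactor $\sqrt{\log y/\log x}$. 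Each cosine sum is controlled by Lemma~\ref{l:pwprimebound}: the $(t-t')$ and $(t+t')$ frequencies furnish the $\min\{\log y/\log x, 1/(|t \mp t'|\log x)\}$ terms inside the square root, while the $2t$ and $2t'$ frequencies are absorbed into the $\log^2(2+|t|+|t'|)/\log x$ remainder by a short dyadic argument near $t = 0$.

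The second bound is similar but cleaner, as no Taylor expansion is required. Using the exact identity $|1+f(p)/p^{1/2+it}|^2 = 1 + 1/p + 2f(p)\cos(t\log p)/p^{1/2}$, multiplying by its $t'$-counterpart, and taking expectation via $\E f(p) = 0$ and $\E f(p)^2 = 1$ gives
\[
\E_p^{(2)} = (1 + 1/p)^2 + \frac{2\cos\bigl((t-t')\log p\bigr) + 2\cos\bigl((t+t')\log p\bigr)}{p}.
\]
The $(1+1/p)^2$ part exponentiates to $(\log y/\log x)^2$, and the two cosine sums produce the two $(1 + \min\{\ldots\})^2$ factors in $|t-t'|$ and $|t+t'|$. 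The extra $\bigl(\min\{\log y/\log x, 1 + (|t|+|t'|)^{1/100}/\log x\}\bigr)^4$ factor is picked up by a refined split, as in Euler Product Result~2.1 of~\cite{HarperHM}, into small and large primes with cut-off related to $\exp\bigl((|t|+|t'|)^{1/50}\bigr)$, with H\"older plus the mean-value Lemma~\ref{l:meansqlargeprimes} handling the complementary range. The main obstacle is simply the careful accounting of all four cosine frequencies $t \pm t'$, $2t$, $2t'$ and verifying that their combinations under the logarithm reproduce precisely the nested min-expressions in the statement; once this bookkeeping is done, no probabilistic ideas are needed beyond those already used in the Steinhaus argument.
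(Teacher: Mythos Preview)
The paper does not actually prove this lemma: its entire proof is a citation to Euler Product Result~2 of~\cite{HarperLargeFluct} and Euler Product Result~2.2 of~\cite{HarperHM}. Your sketch is essentially a reconstruction of what those references do, and the computations you give (the single-prime expansion for the first moment, the exact identity for the second) are correct and lead to the stated bounds once the cosine sums at frequencies $t\pm t'$, $2t$, $2t'$ are estimated via Lemma~\ref{l:pwprimebound}. So your approach is correct and is, in substance, the same as the cited argument --- you are simply filling in what the paper outsources.

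One small inaccuracy: for the first $\min$-factor in the second bound (the one involving $(|t|+|t'|)^{1/100}/\log x$), the relevant input is not Lemma~\ref{l:meansqlargeprimes} (a mean-value estimate in $t$) but rather the pointwise bound of Lemma~\ref{l:pwprimebound} applied to $\sum_p p^{-1-2it}$ and $\sum_p p^{-1-2it'}$, which controls the $\cos(2t\log p)$ and $\cos(2t'\log p)$ contributions. The split into small and large primes is the right idea, but the complementary range is handled pointwise, not by an $L^2$ average. Otherwise your bookkeeping is accurate.
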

\begin{proof}
This is Euler Product Result 2 of~\cite{HarperLargeFluct} followed by a special case of Euler Product Result 2.2 of~\cite{HarperHM}.
\end{proof}
\begin{lemma}[Splitting of random Euler products on large primes]\label{l:largeprimeav}
For any real $200 \leq x \leq y$, let $I(t) \coloneqq \prod_{x < p \leq y} {\bigl( 1 - \frac{f(p)}{p^{1/2 + it}} \bigr)}^{-1} $ in the Steinhaus case and $ I(t) \coloneqq \prod_{x < p \leq y} \bigl( 1 + \frac{f(p)}{p^{1/2 + it}} \bigr) $ in Rademacher case. In the Steinhaus case, we have 
\begin{multline*}
\E I(t_1) \conj{I(t_2)} \conj{I(t_3)} I(t_4) = \E I(t_1) \conj{I(t_2)} \E \conj{I(t_3)} I(t_4) \\ 
+ O \Biggl( {\biggl( \frac{\log y}{\log x} \biggr)}^2 \biggl| \exp \biggl( \sum_{x < p \leq y} \frac{1}{p^{1+i(t_1 - t_3)}} + \frac{1}{p^{1 + i(t_4 - t_2)}} + O \Bigl( \frac{1}{p^{3/2}} \Bigr) \biggr) - 1 \biggr| \Biggr),
\end{multline*}
and in the Rademacher case, we have
\begin{multline*}
\E I(t_1) \conj{I(t_2)} \conj{I(t_3)} I(t_4) = \E I(t_1) \conj{I(t_2)} \E \conj{I(t_3)} I(t_4) \\ 
+ O \Biggl( {\biggl( \frac{\log y}{\log x} \biggr)}^2 \biggl| \exp \biggl( \sum_{x < p \leq y} \frac{1}{p^{1 + i (t_1 - t_3)}} + \frac{1}{p^{1 + i (t_4 - t_2)}} + \frac{1}{p^{1 + i (t_1 + t_4)}} + \frac{1}{p^{1 - i(t_2 + t_3)}} + O \Bigl( \frac{1}{p^{3/2}} \Bigr) \biggr) - 1 \biggr| \Biggr),
\end{multline*}
Furthermore, if $\frac{1}{\log x} \leq |t_1 - t_3|, |t_2 - t_4|, |t_1 + t_4|, |t_2 + t_3| \leq {(\log x)}^{1000}$, then we have
\begin{multline*} 
\E I(t_1) \conj{I(t_2)} \conj{I(t_3)} I(t_4) = \E I(t_1) \conj{I(t_2)} \E \conj{I(t_3)} I(t_4) \\ 
+ O \Biggl( {\biggl( \frac{\log y}{\log x} \biggr)}^2 \biggl( \frac{1}{\log x} \biggl( \frac{1}{|t_1 - t_3|} + \frac{1}{|t_2 - t_4|} + \frac{1}{|t_1 + t_4|} + \frac{1}{|t_2 + t_3|} \biggr) \biggr) \Biggr), \end{multline*}
for both Steinhaus and Rademacher random multiplicative functions $f$.
\end{lemma}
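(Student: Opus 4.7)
The core idea is to exploit the independence of $\{f(p)\}_{p}$ to factor each expectation as an Euler-type product over primes $x < p \leq y$, then perform a careful local analysis. Writing $E_p$ for the local factor of $\E I(t_1)\conj{I(t_2)}\conj{I(t_3)}I(t_4)$ and $G_p$ for the local factor of $\E I(t_1)\conj{I(t_2)} \cdot \E \conj{I(t_3)}I(t_4)$, the goal is to show $E_p = G_p(1 + R_p)$ for an explicit $R_p$ of size $O(1/p)$, and then pass back to the product via $\prod_p E_p - \prod_p G_p = \prod_p G_p \bigl( \exp \sum_p \log(1+R_p) - 1\bigr)$.

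For the Steinhaus case I would expand each $\bigl(1 - f(p)p^{-1/2 \pm it_j}\bigr)^{-1}$ as a geometric series and compute $E_p$ term-by-term using the orthogonality relation $\E f(p)^a\conj{f(p)}^b = 1$ iff $a=b$. Of the six conceivable pairings of the four factors at total degree $2$ in $1/\sqrt{p}$, only four survive the vanishing of odd moments: the \emph{diagonal} pairings $\{1,2\}$ and $\{3,4\}$ (which reassemble to give $G_p$), and the two \emph{cross} pairings $\{1,3\}$ and $\{2,4\}$, contributing $p^{-1-i(t_1-t_3)}$ and $p^{-1-i(t_4-t_2)}$ respectively. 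Higher-degree contributions are bounded uniformly by $O(p^{-3/2})$, yielding
\[
E_p = G_p \Bigl( 1 + p^{-1-i(t_1-t_3)} + p^{-1-i(t_4-t_2)} + O\bigl(p^{-3/2}\bigr) \Bigr).
\]
The Rademacher case is essentially identical except that $f(p)$ is real, so $\E f(p)^a = 1$ iff $a$ is even; now \emph{all six} pairings contribute at order $1/p$. The diagonal pair still produces $G_p$, and the four cross pairings $\{1,3\}, \{2,4\}, \{1,4\}, \{2,3\}$ yield exactly the four exponential arguments in the statement of the lemma (the last coming from $p^{-1}\cdot p^{i(t_2+t_3)}$).

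To pass from local to global, I would use $\log(1+R_p) = R_p + O(R_p^2) = R_p + O(p^{-2})$ and note that $\sum_p (p^{-3/2} + p^{-2})$ is uniformly bounded, so the tail contributions can be absorbed into the $O(p^{-3/2})$ term inside the stated exponent without affecting the form of the error. The prefactor is bounded via
\[
\bigl|\textstyle\prod_p G_p\bigr| \leq \exp\Bigl( \sum_{x < p \leq y} \bigl( \tfrac{1}{p} + O(\tfrac{1}{p^2}) \bigr) + \sum_{x<p\leq y}\bigl(\tfrac{1}{p}+ O(\tfrac{1}{p^2})\bigr)\Bigr) \ll \bigl(\log y/\log x\bigr)^2
\]
by Mertens, giving the first two bullets of the lemma. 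For the third statement, when each cross frequency $\tau \in \{t_1-t_3,\ t_4-t_2,\ t_1+t_4,\ -(t_2+t_3)\}$ lies in $[1/\log x,\ (\log x)^{1000}]$, Lemma~\ref{l:pwprimebound} gives $\bigl|\sum_{x<p\leq y} p^{-1-i\tau}\bigr| \ll 1/(|\tau|\log x)$, which in particular is $O(1)$. The elementary inequality $|e^z - 1| \leq |z|e^{|z|}$ then converts the exponential error into the claimed linear bound.

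The principal technical hurdle is the bookkeeping in the previous step: one must verify that the $O(p^{-3/2})$ tails left over from the degree-$\geq 3$ local expansions, together with the $O(p^{-2})$ corrections from $\log(1+R_p)$, sum over $x < p \leq y$ to a bounded quantity uniformly in the $t_j$, so that they contribute only a harmless multiplicative constant rather than a factor growing with $y$. Once this accounting is done cleanly, the three claimed estimates follow by combining the local factorisation with Mertens, the Taylor expansion $|e^z-1|\leq |z|e^{|z|}$, and (for the third bound) Lemma~\ref{l:pwprimebound}.
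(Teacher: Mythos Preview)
Your proposal is correct and follows essentially the same route as the paper: factor both expectations as products over primes, compute the local factors $E_p$ and $G_p$ by expanding and using orthogonality, form the ratio, and rewrite the difference as $\prod_p G_p$ times $\bigl(\exp\sum_p\log(1+R_p)-1\bigr)$; the third statement then follows from Lemma~\ref{l:pwprimebound} and $|e^z-1|\ll|z|$ for bounded $z$. The only minor deviation is in bounding the prefactor $\bigl|\prod_p G_p\bigr|$: the paper obtains $(\log y/\log x)^2$ via Cauchy--Schwarz and the Euler product moment bounds (Lemmas~\ref{l:epstein} and~\ref{l:eprad}), whereas you get it directly from Mertens, which is if anything slightly cleaner.
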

\begin{proof}
The last statement follows from the first two statements by an application of Lemma~\ref{l:pwprimebound} and the fact that $\sum_{x < p \leq y} \frac{1}{p^{3/2}} \ll \frac{1}{\sqrt{x} \log x} $, which is negligible compared to the other terms given our conditions. We then apply the fact that for bounded $z \in \C$, we have $| \exp (z) - 1 | \ll |z| $. \\
For the first two statements, we begin by calculating the first terms on the right-hand sides, $\E I(t_1) \conj{I(t_2)} \E \conj{I(t_3)} I(t_4)$. For any $t, t' \in \R$, in the Steinhaus case, we have
\begin{multline*}
\E I(t) \conj{I(t')}
= \E \prod_{x < p \leq y} {\biggl( 1 - \frac{f(p)}{p^{1/2 + it}} - \frac{\conj{f(p)}}{p^{1/2 - it'}} + \frac{1}{p^{1+i(t-t')}} \biggr)}^{-1} \\
= \E \prod_{x < p \leq y} \biggl( 1 + \frac{f(p)}{p^{1/2 + it}} + \frac{\conj{f(p)}}{p^{1/2 - it'}} - \frac{1}{p^{1+i(t-t')}} + {\biggl( \frac{f(p)}{p^{1/2 + it}} + \frac{\conj{f(p)}}{p^{1/2 - it'}} - \frac{1}{p^{1+i(t-t')}} \biggr)}^2 + O\biggl( \frac{1}{p^{3/2}} \biggr) \biggr),
\end{multline*}
by expanding out the geometric series. Bringing the expectation inside the product, we find
\[ 
\E I(t) \conj{I(t')} = \prod_{x < p \leq y} \biggl( 1 + \frac{1}{p^{1+i(t-t')}} + O\biggl( \frac{1}{p^{3/2}} \biggr) \biggr). 
\]
In the Rademacher case, we obtain $\E I(t) \conj{I(t')} = \prod_{x < p \leq y} \bigl( 1 + \frac{1}{p^{1+i(t-t')}} \bigr)$ by similar techniques. Therefore, for $t_1, t_2, t_4, t_3 \in \R$, in the Steinhaus case, we have
\[ 
\E I(t_1) \conj{I(t_2)} \E \conj{I(t_3)} I(t_4) = \prod_{x < p \leq y} \biggl( 1 + \frac{1}{p^{1 + i(t_1 - t_2)}} + \frac{1}{p^{1 + i(t_4 - t_3)}} + O \biggl( \frac{1}{p^{3/2}} \biggr) \biggr), 
\]
which we note is equal to
\[
\prod_{x < p \leq y} {\biggl( 1 - \frac{1}{p^{1 + i(t_1 - t_2)}} - \frac{1}{p^{1 + i(t_4 - t_3)}} + O \biggl( \frac{1}{p^{3/2}} \biggr) \biggr)}^{-1}.
\]
Writing it in this way will aid with the calculations that follow. Note that the same result holds in the Rademacher case. We now calculate the left-hand side, $\E I(t_1) \conj{I(t_2)} \conj{I(t_3)} I(t_4) $, for any $t_1, t_2, t_4, t_3 \in \R$. In the Steinhaus case, this is
\begin{multline*} 
\E I(t_1) \conj{I(t_2)} \conj{I(t_3)} I(t_4) =
\E \prod_{x < p \leq y} \biggl( 1 - \frac{f(p)}{p^{1/2 + it_1}} - \frac{\conj{f(p)}}{p^{1/2 - it_2}} - \frac{f(p)}{p^{1/2 + it_4}} - \frac{\conj{f(p)}}{p^{1/2 - it_3}} \\ 
+ \frac{1}{p^{1 + i (t_1 - t_2)}} + \frac{1}{p^{1 + i (t_1 - t_3)}} + \frac{1}{p^{1 + i (t_4 - t_2)}} + \frac{1}{p^{1 + i (t_4 - t_3)}} + \frac{{f(p)}^2}{p^{1 + i(t_1 + t_4)}} + \frac{\conj{f(p)}^2}{p^{1 - i(t_2 + t_3)}} + O \biggl( \frac{1}{p^{3/2}} \biggr) \biggr)^{-1}. \end{multline*}
Expanding as a geometric series and bringing the expectation inside the product, \\ 
$\E I(t_1) \conj{I(t_2)} \conj{I(t_3)} I(t_4) $ is equal to
\[
\prod_{x < p \leq y} \biggl( 1 + \frac{1}{p^{1 + i (t_1 - t_2)}} + \frac{1}{p^{1 + i (t_1 - t_3)}} + \frac{1}{p^{1 + i (t_4 - t_2)}} + \frac{1}{p^{1 + i (t_4 - t_3)}} + O \biggl( \frac{1}{p^{3/2}} \biggr) \biggr).
\]
In the Rademacher case, one finds that $\E I(t_1) \conj{I(t_2)} \conj{I(t_3)} I(t_4)$ is equal to
\[
\prod_{x < p \leq y} \biggl( 1 + \frac{1}{p^{1 + i (t_1 - t_2)}} + \frac{1}{p^{1 + i (t_1 - t_3)}} + \frac{1}{p^{1 + i (t_4 - t_2)}} + \frac{1}{p^{1 + i (t_4 - t_3)}} + \frac{1}{p^{1 + i (t_1 + t_4)}} + \frac{1}{p^{1 - i(t_2 + t_3)}} + O \biggl( \frac{1}{p^2} \biggr) \biggr). 
\]
We now proceed by calculating $\bigl| \E I(t_1) \conj{I(t_2)} \conj{I(t_3)} I(t_4) - \E I(t_1) \conj{I(t_2)} \E \conj{I(t_3)} I(t_4) \bigr|$. Seeing as the latter term is nonzero, and by an application of Cauchy--Schwarz, we can write this difference as
\begin{align*} 
& \bigl| \E I(t_1) \conj{I(t_2)} \E \conj{I(t_3)} I(t_4) \bigr| \biggl| \frac{\E I(t_1) \conj{I(t_2)} \conj{I(t_3)} I(t_4)}{\E I(t_1) \conj{I(t_2)} \E \conj{I(t_3)} I(t_4)} - 1 \biggr| 
\\ \leq & {\Bigl( \E |I(t_1)|^2 \E |I(t_2)|^2 \E |I(t_4)|^2 \E |I(t_3)|^2 \Bigr)}^{1/2} \biggl| \frac{\E I(t_1) \conj{I(t_2)} \conj{I(t_3)} I(t_4)}{\E I(t_1) \conj{I(t_2)} \E \conj{I(t_3)} I(t_4)} - 1 \biggr| ,
\end{align*}
hence
\begin{equation}\label{equ:eptimesquotient}
\bigl| \E I(t_1) \conj{I(t_2)} \conj{I(t_3)} I(t_4) - \E I(t_1) \conj{I(t_2)} \E \conj{I(t_3)} I(t_4) \bigr| \ll {\biggl(\frac{\log y}{\log x}\biggr)}^2 \biggl| \frac{\E I(t_1) \conj{I(t_2)} \conj{I(t_3)} I(t_4)}{\E I(t_1) \conj{I(t_2)} \E \conj{I(t_3)} I(t_4)} - 1 \biggr|.
\end{equation}
where we have applied Lemmas~\ref{l:epstein} and~\ref{l:eprad} to evaluate the mean squares of the Euler products. Making use of our previous calculations, we find that, in the Steinhaus case,
\begin{align*}
\frac{\E I(t_1) \conj{I(t_2)} \conj{I(t_3)} I(t_4)}{\E I(t_1) \conj{I(t_2)} \E \conj{I(t_3)} I(t_4)} =& 
\prod_{x < p \leq y} \biggl( 1 + \frac{1}{p^{1 + i (t_1 - t_2)}} + \frac{1}{p^{1 + i (t_1 - t_3)}} + \frac{1}{p^{1 + i (t_4 - t_2)}} + \frac{1}{p^{1 + i (t_4 - t_3)}} \\ 
& + O \Bigl( \frac{1}{p^{3/2}} \Bigr) \biggr) \biggl( 1 - \frac{1}{p^{1 + i(t_1 - t_2)}} - \frac{1}{p^{1 + i(t_4 - t_3)}} + O \Bigl( \frac{1}{p^{3/2}} \Bigr) \biggr) \\
=& \prod_{x < p \leq y} \biggl( 1 + \frac{1}{p^{1 + i (t_1 - t_3)}} + \frac{1}{p^{1 + i (t_4 - t_2)}} + O \Bigl( \frac{1}{p^{3/2}} \Bigr) \biggl) .
\end{align*}
In the Rademacher case, one instead finds that
\[ 
\frac{\E I(t_1) \conj{I(t_2)} \conj{I(t_3)} I(t_4)}{\E I(t_1) \conj{I(t_2)} \E \conj{I(t_3)} I(t_4)} = \prod_{x < p \leq y} \biggl( 1 + \frac{1}{p^{1 + i (t_1 - t_3)}} + \frac{1}{p^{1 + i (t_4 - t_2)}} + \frac{1}{p^{1 + i (t_1 + t_4)}} + \frac{1}{p^{1 - i(t_2 + t_3)}} + O \Bigl( \frac{1}{p^{3/2}} \Bigr) \biggl). 
\]
We can then write these products as the exponential of a sum of logarithms. Taylor expanding these logarithms and combining with~\eqref{equ:eptimesquotient} gives the desired result.
\end{proof}
\subsection{Multiplicative Chaos Results}
Recall that $F_j (s) = \prod_{p \leq \sqrt{x}^{e^{-j}}} {\bigl( 1 - \frac{f(p)}{p^s} \bigr)}^{-1}$ in the Steinhaus case and $F_j (s) = \prod_{p \leq \sqrt{x}^{e^{-j}}} {\bigl( 1 + \frac{f(p)}{p^s} \bigr)}$ in the Rademacher case. We write $F$ in place of $F_0$. This subsection contains two key results about the typical behaviour of random Euler products. The first result involves a barrier event, $\mathcal{D}(t)$, which restricts the probability space to the likely event that the Euler product grows ``as expected''. By restricting to this event we prevent higher moments of $|F(1/2 + it)|$, which are dominated by increasingly unlikely large values of $|F(1/2 + it)|$, from growing uncontrollably. We make use of barrier events in the proof of Proposition~\ref{p:smallprimesconc}, where we encounter joint moment computations. The reader is encouraged to consult~\cite[Section~3]{HarperMomentsIII} for a more in-depth explanation. \\
To state the first result, we define the net of points $t(j)$ that serve as approximations for $t$ by $t(-1) = t$, and
\[ 
t(j) \coloneqq \max \biggl\{ u \leq t(j-1) \colon u = \frac{n}{((\log x)/e^j) \log((\log x)/e^j)} \text{ for some } n \in \Z \biggr\}, 
\]
for $0 \leq j \leq \log \log x - \log 2 -1$. We then have the following result of~\citet{HarperLargeFluct}:
\begin{lemma}[Expectation with barrier events]\label{l:mcbarrier}
Let $f$ be a Steinhaus or a Rademacher random multiplicative function. Suppose that $y \leq 1000 \log \log \log x$ is a non-negative integer. Then uniformly for all large $x$ and all $\frac{1}{{(\log \log x)}^{1000}} \leq |t| \leq {(\log \log x)}^{1000} $, the following is true: if we let $\mathcal{D}(t)$ denote the event that
\[ 
|F_j (1/2 + it(j)) | \leq \frac{\log x}{e^{j}} {(\log \log x)}^5 \quad \forall \, y \leq j \leq \log \log x - \log 2 - 1, 
\]
and let $\mathcal{A}(t)$ denote the event that 
\begin{align*}
&|F_j (1/2 + it)| \leq \frac{\log x}{e^j {(\log \log x)}^{1000}} \, \, \forall \, y \leq j \leq 0.99 \log \log x, \text{and } \\ 
&|F_j (1/2 + it)| \leq \frac{\log x}{e^j} {(\log \log x)}^6 \, \, \forall \, j \geq y, 
\end{align*}
then we have 
\[ 
\E |F_y (1/2 + it)|^2 \mathbf{1}_{\mathcal{D}(t)} \mathbf{1}_{\mathcal{A}(t) \text{ fails}} \ll \frac{\log x}{e^y} \frac{{(\log \log \log x)}^7}{\log \log x} . 
\]
Furthermore, the probability that $\mathcal{D}(t)$ holds for all $|t| \leq {(\log \log x)}^4$ is $1 - O \bigl({(\log \log x)}^{-4}\bigr)$.
\end{lemma}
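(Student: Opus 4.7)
The plan is to follow a barrier / ``ballot''\nobreakdash-style argument in the spirit of~\cite{HarperLargeFluct}. The heuristic is that on $\mathcal{D}(t)$ the quantity $|F_j(1/2+it(j))|$ at the discretised net point $t(j)$ is capped by $\frac{\log x}{e^j}(\log\log x)^5$, whereas a failure of $\mathcal{A}(t)$ at some level $j$ forces $|F_j(1/2+it)|$ to be nearly as large at the continuous point $t$; since $t(j)$ is chosen to lie well within the scale on which $F_j$ varies, and since taking values so close to the deterministic maximum is statistically rare for the (approximately) log-correlated field $\log|F_j(1/2+\cdot)|$, this coincidence should be unusual.

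I would union-bound $\mathcal{A}(t)^c$ over the smallest level $j^\ast \in [y,\log\log x]$ at which one of the two inequalities defining $\mathcal{A}(t)$ fails, with respective thresholds $M_{j^\ast}=\frac{\log x}{e^{j^\ast}}(\log\log x)^{-1000}$ for $y\leq j^\ast\leq 0.99\log\log x$ and $M_{j^\ast}=\frac{\log x}{e^{j^\ast}}(\log\log x)^6$ otherwise. For each $j^\ast$, factor $F_y(1/2+it)=F_{j^\ast}(1/2+it)\cdot G_{j^\ast}(1/2+it)$, where $G_{j^\ast}$ is the Euler product over primes in $(\sqrt{x}^{e^{-j^\ast}},\sqrt{x}^{e^{-y}}]$ and is independent of $F_{j^\ast}$. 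The only condition from $\mathcal{D}(t)$ that I keep is the one at level $j^\ast$, call it $\mathcal{D}_{j^\ast}(t)$, which depends on $F_{j^\ast}$ alone; independence then gives
\begin{equation*}
\E|F_y|^2\mathbf{1}_{\mathcal{D}(t)}\mathbf{1}_{j^\ast\text{ first fails}}\leq \E|G_{j^\ast}|^2\cdot\E|F_{j^\ast}|^2\mathbf{1}_{\mathcal{D}_{j^\ast}(t)}\mathbf{1}_{|F_{j^\ast}(1/2+it)|>M_{j^\ast}},
\end{equation*}
with $\E|G_{j^\ast}|^2\asymp e^{j^\ast-y}$ by Lemma~\ref{l:epstein} (resp.~\ref{l:eprad}).

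The main obstacle is the inner $F_{j^\ast}$-expectation. The tool is the ``increment'' $R_{j^\ast}(t)\coloneqq F_{j^\ast}(1/2+it)/F_{j^\ast}(1/2+it(j^\ast))$: its joint moments with $|F_{j^\ast}(1/2+it(j^\ast))|^2$ can be computed using Lemmas~\ref{l:epstein} and~\ref{l:eprad}, combined with the proximity $|t-t(j^\ast)|\leq 1/((\log x/e^{j^\ast})\log(\log x/e^{j^\ast}))$, which is well below the scale $1/\log(\sqrt{x}^{e^{-j^\ast}})$ on which $F_{j^\ast}$ genuinely fluctuates. After optimising the power in Markov's inequality (this is the step producing the $(\log\log\log x)^7$ factor from the ratio $t(j^\ast)$ to $(\log\log x)^5/M_{j^\ast}$), one obtains
\begin{equation*}
\E|F_{j^\ast}|^2\mathbf{1}_{\mathcal{D}_{j^\ast}(t)}\mathbf{1}_{|F_{j^\ast}(1/2+it)|>M_{j^\ast}}\ll\frac{\log x}{e^{j^\ast}}\cdot\frac{(\log\log\log x)^7}{(\log\log x)^2}.
\end{equation*}
Multiplying by $\E|G_{j^\ast}|^2\asymp e^{j^\ast-y}$ and summing over the $O(\log\log x)$ allowed values of $j^\ast$ yields the claimed bound $\frac{\log x}{e^y}\cdot\frac{(\log\log\log x)^7}{\log\log x}$.

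For the final probabilistic claim, Chebyshev applied to $\E|F_j(1/2+it(j))|^2\ll\log x/e^j$ (from Lemma~\ref{l:epstein}/\ref{l:eprad}) yields $\p\bigl(|F_j(1/2+it(j))|>\tfrac{\log x}{e^j}(\log\log x)^5\bigr)\ll(e^j/\log x)(\log\log x)^{-10}$. A union bound over $j\in[y,\log\log x-\log 2-1]$ and over the discrete approximation net $\{t(j):|t|\leq(\log\log x)^4\}$, whose cardinality is $\asymp(\log\log x)^5\log x/e^j$ at level $j$, then produces a total failure probability of $O((\log\log x)^{-4})$, as required.
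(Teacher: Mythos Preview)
Your union bound for the final probability claim is exactly what the paper does, and is correct.

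For the expectation bound, the paper does not argue directly: it simply invokes Multiplicative Chaos Result~2 of~\cite{HarperLargeFluct} and notes the minor adaptation needed because the Euler product here runs only up to $\sqrt{x}^{e^{-y}}$. Your proposal instead sketches what that cited result would prove. The high-level decomposition (union over the first level $j^\ast$ at which $\mathcal{A}(t)$ fails, factor $F_y=F_{j^\ast}\cdot G_{j^\ast}$, use independence) is the right shape, but the step where you keep \emph{only} the single barrier condition $\mathcal{D}_{j^\ast}(t)$ does not suffice in the range $y\le j^\ast\le 0.99\log\log x$. There the $\mathcal{A}$-threshold is $M_{j^\ast}=\frac{\log x}{e^{j^\ast}}(\log\log x)^{-1000}$, which lies \emph{far below} the $\mathcal{D}$-threshold $\frac{\log x}{e^{j^\ast}}(\log\log x)^5$; consequently the event $\{|F_{j^\ast}(1/2+it)|>M_{j^\ast}\}$ is essentially unconstrained by $\mathcal{D}_{j^\ast}(t)$, and the increment $R_{j^\ast}(t)$ gives no useful restriction (the required ratio bound $|R_{j^\ast}|>(\log\log x)^{-1005}$ is trivially satisfied). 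Since $\E|F_{j^\ast}(1/2+it)|^2\asymp\frac{\log x}{e^{j^\ast}}$ already, your scheme produces no saving over the trivial bound for this range, whereas you claim a saving of $(\log\log\log x)^7/(\log\log x)^2$.

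The actual mechanism in~\cite{HarperLargeFluct} is a Girsanov tilt plus a ballot-problem estimate that uses the \emph{full} barrier $\mathcal{D}(t)$ across all scales $j\ge y$ simultaneously: after tilting so that the size-biased measure makes $\log|F_j|$ behave like a random walk with zero drift, the barrier forces this walk to stay below a slowly growing ceiling, and the ballot/Gaussian-walk estimate then yields the $1/\log\log x$ saving. Discarding all but one level of $\mathcal{D}(t)$ throws away precisely the ingredient that drives this saving.
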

\begin{proof}
The first part of this statement follows from Multiplicative Chaos Result 2 of~\citet{HarperLargeFluct}. For simplicity later on, we take similar $\mathcal{D} (t)$ and $\mathcal{A}(t)$ to those that appear in the statement there. Note that, in order to apply the result as stated there to the shorter Euler product $F_y (1/2 + it)$, one needs, for example, the event $\mathcal{D}(t)$ to imply $|F_j (1/2 + it(j)) | \leq \frac{\log x}{e^j} {(\log \log \sqrt{x}^{e^{-y}})}^5$ $\forall j \geq y$. However, since $y$ is small, $\log \log x \leq 2 \log \log \sqrt{x}^{e^{-y}}$, say, when $x$ is large, and an identical proof goes through if one only changes the bounds in our events by constants. The last statement on the probability of $\mathcal{D}(t)$ follows from a union bound, seeing as the probability that $\mathcal{D}(t)$ fails for some $|t| \leq {(\log \log x)}^4$ is
\begin{align*} 
\leq & \sum_{y \leq j \leq \log \log x - \log 2 -1} \sum_{|t(j)| \leq {(\log \log x)}^4} \p \Bigl( |F_j (1/2 + it(j)) | > \frac{\log x}{e^j} {(\log \log x)}^5 \Bigr) \\
\ll & \sum_{y \leq j \leq \log \log x - \log 2 -1} \biggl( \frac{(\log x){(\log \log x)}^5}{e^j} \biggr) \frac{e^j}{(\log x) {(\log \log x)}^{10}} \ll \frac{1}{{(\log \log x)}^4}.
\end{align*}
To obtain the second line, we have used the fact that the number of $|t(j)| \leq {(\log \log x)}^4$ is $\ll {(\log \log x)}^5 \frac{\log x}{e^j}$, applied Markov's inequality with second moments, and used the fact that $\E |F_j (1/2 + it)|^2 \ll \frac{\log x}{e^j}$, which follows from Lemmas~\ref{l:epstein} and~\ref{l:eprad}.
\end{proof}
The second result in this section contains key results from~\cite{HarperLM} which we utilise at various points throughout the paper. Recall that $F_{(x)} (s) \coloneqq \prod_{p \leq x} {\bigl( 1 - \frac{f(p)}{p^{s}} \bigr)}^{-1}$ in the Steinhaus case. Analogously, we define $F_{(x)} (s) \coloneqq \prod_{p \leq x} {\bigl( 1 + \frac{f(p)}{p^{s}} \bigr)}$ in the Rademacher case. We then have the following:
\begin{lemma}\label{l:mcexpectation} 
Let $f$ be a Rademacher or Steinhaus random multiplicative function. For any $0 < q < 1$, we have
\[ 
\E {\biggl( \frac{{(\log \log x)}^{1/2}}{\log x} \int_{\R} \biggl| \frac{F_{(x)} (1/2 + it)}{1/2 + it} \biggr|^2 \, dt \biggr)}^{q} \ll_q 1 ,
\]
and
\[
\E {\biggl( \frac{{(\log \log x)}^{1/4}}{\sqrt{x}} \Bigl| \sum_{\substack{n \leq x \\ P(n) > \sqrt{x}}} f(n) \Bigr| \biggr)}^{2q} \ll_q 1 .
\]
\end{lemma}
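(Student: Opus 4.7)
The plan is to deduce both bounds from the machinery developed in~\cite{HarperLM}. For the first bound, the principal input is the low-moment estimate
\[
\E \biggl( \frac{1}{\log x} \int_{|t| \leq 1/2} |F_{(x)}(1/2+it)|^2 \, dt \biggr)^q \ll_q (\log \log x)^{-q/2},
\]
which is essentially the content of one of the main propositions of~\cite{HarperLM} (translated from its Dirichlet-polynomial form via Parseval). To reach the full real line with weight $|1/2+it|^{-2}$, I would decompose $\R$ into $[-1,1]$ and dyadic annuli $\{2^{k-1}<|t|\leq 2^k\}$ for $k \geq 1$; the weight contributes a factor $\asymp 2^{-2k}$ on the $k$-th annulus. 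In the Steinhaus case, the law of $t \mapsto F_{(x)}(1/2+it)$ is invariant under real translation (the unimodular factor $p^{i\tau}$ absorbs into $f(p)$), so Harper's bound applies verbatim on each annulus, and summing the geometric series $\sum_k 2^{-2k}$ yields the claim. In the Rademacher case translation invariance fails; for $|t|>(\log x)^{100}$ I would fall back on a crude first-moment bound combined with Markov's inequality (using the elementary identity $\E |F_{(x)}(1/2+it)|^2 = \prod_{p \leq x}(1+1/p) \asymp \log x$), while for moderate $|t|$ one uses the symmetry $|F_{(x)}(1/2-it)|=|F_{(x)}(1/2+it)|$ together with an adaptation of Harper's ballot-style argument to translated intervals.

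For the second bound, the main input is
\[
\E \Bigl| \sum_{n \leq x} f(n) \Bigr|^{2q} \ll_q \frac{x^q}{(\log \log x)^{q/2}}, \qquad 0 < q \leq 1,
\]
which is the main theorem of~\cite{HarperLM}. To pass to the restricted sum, I would use subadditivity $(a+b)^{2q} \leq a^{2q}+b^{2q}$ (valid for $2q \leq 1$; Minkowski's inequality handles $1<2q<2$) to split
\[
\Bigl| \sum_{\substack{n \leq x \\ P(n) > \sqrt{x}}} f(n) \Bigr|^{2q} \leq \Bigl| \sum_{n \leq x} f(n) \Bigr|^{2q} + \Bigl| \sum_{\substack{n \leq x \\ P(n) \leq \sqrt{x}}} f(n) \Bigr|^{2q},
\]
and then control the smooth-sum term by running Harper's argument on this restricted set; the key ingredient, a low-moment bound on the mean square of an Euler product truncated at $\sqrt{x}$, is insensitive to which integers one chooses to sum over. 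Multiplying through by $((\log\log x)^{1/4}/\sqrt{x})^{2q}$ then delivers the claimed $\ll_q 1$ bound.

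The main obstacle is the Rademacher case of the first bound: without translation invariance, extending Harper's $q$-th moment estimate from $[-1/2,1/2]$ to a shifted compact interval requires one to revisit the ballot-style argument of~\cite{HarperLM} and verify its stability under translation of the center of the interval. Conceptually this is not new, since the Gaussian field $(\log|F_{(x)}(1/2+it)|)_t$ has covariance depending essentially only on $|t_1-t_2|$ up to negligible boundary effects, but the bookkeeping must be checked carefully. All remaining steps are routine manipulations built on top of Harper's existing estimates.
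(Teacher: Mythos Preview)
Your approach to the first bound is essentially what the paper does: decompose the real line into unit (rather than dyadic) intervals, use subadditivity of $x \mapsto x^q$ for $q<1$ to separate the pieces, and then invoke translation invariance in the Steinhaus case. One correction: your ``main obstacle'' in the Rademacher case is not actually new work. Harper's low-moment paper already treats the shifted intervals $[N-1/2,N+1/2]$ in its Rademacher section (Section~4.4 there), and the paper simply cites this, obtaining a bound of the shape $\max_{N \in \Z}(1+|N|)^{-1/4}\,\E\bigl(\cdots\int_{N-1/2}^{N+1/2}\cdots\bigr)^q$. So there is nothing to revisit.

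For the second bound you take a genuinely different route. The paper does \emph{not} split into the full sum plus a smooth-number remainder; instead it conditions on ${(f(p))}_{p \leq \sqrt{x}}$, so that $\sum_{n \leq x,\,P(n)>\sqrt{x}} f(n) = \sum_{\sqrt{x}<p\leq x} f(p)\sum_{m\leq x/p} f(m)$ is a sum of conditionally independent terms, and then applies Harper's Propositions~1 and~2 (with the conditioning threshold moved from $x^{1/e}$ to $\sqrt{x}$) to bound the $2q$-th moment directly by the $q$-th moment of the Euler-product integral already controlled in part one. This is shorter and avoids any separate analysis of the smooth sum. Your route is not wrong, but the sentence ``the key ingredient \ldots\ is insensitive to which integers one chooses to sum over'' is too glib: Harper's argument depends on the multiplicative structure (conditioning on small primes, Parseval for the Euler product), not merely on cardinality, so adapting it to $\sum_{P(n)\leq\sqrt{x}} f(n)$ requires checking that this structure survives, which it does but is not automatic.
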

\begin{proof}
By Hölder's inequality, it suffices to prove the statements for $2/3 < q < 1$. Breaking up the integral identically to the end of~\citet[Section~2.4]{HarperLM}, in the Steinhaus case, the first expectation is
\[
\ll \E {\biggl( \frac{{(\log \log x)}^{1/2}}{\log x} \int_{-1/2}^{1/2} |F_{(x)} (1/2 + it)|^2 \, dt \biggr)}^{q},
\]
and in the Rademacher case, it is
\[ 
\ll \max_{N \in \Z} {(1 + |N|)}^{-1/4} \E {\biggl( \frac{{(\log \log x)}^{1/2}}{\log x} \int_{N-1/2}^{N + 1/2} |F_{(x)} (1/2 + it)|^2 \, dt \biggr)}^{q}.
\]
In the Steinhaus case, the statement follows from the main result proved in the section ``Proof of the upper bound in Theorem 1, assuming Key Propositions 1 and 2'' of~\cite[Section~4.1]{HarperLM} (taking $k=0$). In the Rademacher case, the result follows similarly from~\cite[Section~4.4]{HarperLM}. The second statement follows by combining the first statement with the fact that 
\[
\E {\biggl( \frac{{(\log \log x)}^{1/4}}{\sqrt{x}} \Bigl| \sum_{\substack{n \leq x \\ P(n) > \sqrt{x}}} f(n) \Bigr| \biggr)}^{2q} \ll \E {\biggl( \frac{{(\log \log x)}^{1/2}}{\log x} \int_{\R} \biggl| \frac{F_{(\sqrt{x})} (1/2 + it)}{1/2 + it} \biggr|^2 \, dt \biggr)}^{q} + o(1) ,
\]
uniformly for $2/3 < q < 1$, in both the Steinhaus and Rademacher cases. This follows almost identically to the handling of the $k=0$ portion of the sum in~\citet[Propositions~1 and~2]{HarperLM}, making only minor adjustments coming from the fact that here we condition on ${( f(p) )}_{p \leq \sqrt{x}}$ as opposed to ${( f(p) )}_{p \leq x^{e^{-1}}}$.
\end{proof}

\section{Conditional normal approximation}\label{s:normalapprox}
In this section, we will prove that our partial sums follow a conditional Gaussian distribution in the Steinhaus case. Note that an analogous statement (with a real Gaussian) holds in the Rademacher case, and is simpler to prove, seeing as one can apply the Berry--Esseen theorem.
\begin{proposition}\label{p:steincna}
Let $f$ be a Steinhaus random multiplicative function, and let $\mathcal{Z}$ denote a complex Gaussian that is independent from ${\bigl( f(p) \bigr)}_{p \leq \sqrt{x}}$ with mean $0$ and covariance matrix $\frac{{(\log \log x)}^{1/2}}{x} \sum_{\sqrt{x} < p \leq x} \bigl| \sum_{m \leq x/p} f(m) \bigr|^2 I_2 $, where $I_2$ is the two-dimensional identity matrix. For any fixed $a, b \in \R$, let $E$ be the embedding of $(- \infty, a) \times (- \infty, b)$ in the complex plane, then
\[
\p \biggl( \frac{{(\log \log x)}^{1/4}}{\sqrt{x}} \sum_{\substack{n \leq x \\ P(n) > \sqrt{x}}} f(n) \in E \biggr) = \p ( \mathcal{Z} \in E ) + O (x^{-1/20}) .
\]
\end{proposition}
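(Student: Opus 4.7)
I would begin with the factorisation $n = pm$ where $p = P(n) \in (\sqrt{x}, x]$ and $m = n/p$. Since $p > \sqrt{x}$ and $n \leq x$ force $p^2 \nmid n$, this factorisation is unique, and $m \leq x/p < \sqrt{x} < p$ automatically guarantees $P(m) < p$, so
\[
\sum_{\substack{n \leq x \\ P(n) > \sqrt{x}}} f(n) = \sum_{\sqrt{x} < p \leq x} f(p) c_p, \qquad c_p := \sum_{m \leq x/p} f(m) .
\]
Let $\mathcal{F} := \sigma(f(q) : q \leq \sqrt{x})$. Conditional on $\mathcal{F}$, the coefficients $c_p$ are deterministic while the remaining variables ${(f(p))}_{\sqrt{x} < p \leq x}$ are i.i.d.\ uniform on the unit circle, independent of $\mathcal{F}$. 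Each summand $c_p f(p)$ is rotationally symmetric, with $\E[c_p f(p) \mid \mathcal{F}] = 0$, $\E[|c_p f(p)|^2 \mid \mathcal{F}] = |c_p|^2$, and (crucially for the Steinhaus case) $\E[(c_p f(p))^2 \mid \mathcal{F}] = c_p^2 \, \E[f(p)^2] = 0$. The conditional covariance matrix of the sum on $\R^2$ is therefore proportional to $V \cdot I_2$ with $V := \sum_p |c_p|^2$, matching the structure of $\mathcal{Z}$ up to scaling convention.

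The next step is to apply the quantitative bivariate Berry--Esseen-type theorem of Harper--Lamzouri for sums of independent rotationally symmetric complex random variables, which over axis-aligned rectangles $E \subset \R^2$ should produce a conditional approximation of the shape
\[
\sup_E \bigl| \p(\mathrm{sum} \in E \mid \mathcal{F}) - \p(\mathcal{Z} \in E \mid \mathcal{F}) \bigr| \ll {(\log x)}^{O(1)} \cdot \frac{\sum_p |c_p|^3}{V^{3/2}}.
\]
To obtain the $x^{-1/20}$ error, it suffices to exhibit an $\mathcal{F}$-event $G$ of probability $\geq 1 - O(x^{-1/20})$ on which this Lyapunov ratio is $\ll x^{-1/20}$, since on $G^c$ the trivial bound $\leq 1$ on the total variation contributes only $\p(G^c)$ to the overall error. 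Lemma~\ref{l:expectation} with $k=2$ gives $\E |c_p|^4 \ll {(x/p)}^2 {(\log x)}^{O(1)}$, hence by Jensen $\E |c_p|^3 \ll {(x/p)}^{3/2} {(\log x)}^{O(1)}$. Summing over primes $p \in (\sqrt{x}, x]$ (the dominant contribution coming from $p$ near $\sqrt{x}$) yields $\E \sum_p |c_p|^3 \ll x^{5/4} {(\log x)}^{O(1)}$, and Markov gives $\sum_p |c_p|^3 \leq x^{5/4+\varepsilon}$ with probability $\geq 1 - x^{-\varepsilon}$. For the denominator, I would decompose $V = V_0 + W$ where $V_0 := \sum_p \lfloor x/p \rfloor \asymp x$ is the deterministic diagonal contribution to $|c_p|^2$ and $W$ is the mean-zero off-diagonal bilinear form in the $f(m)$. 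On the event $|W| \leq V_0/2$ we obtain $V \gg x$, whence the Lyapunov ratio is $\ll x^{5/4+\varepsilon}/x^{3/2} = x^{-1/4+\varepsilon}$, comfortably beating $x^{-1/20}$.

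The main obstacle is therefore establishing a polynomial-in-$x$ tail bound for the fluctuation $W = \sum_{m_1 \neq m_2} f(m_1) \conj{f(m_2)} N(m_1, m_2)$, where $N(m_1, m_2) = \#\{\text{primes } p \in (\sqrt{x}, x] : p \leq x/\max(m_1, m_2)\}$. A plain second moment via Lemma~\ref{l:expectation} only produces a $(\log x)^{O(1)}$ saving, so the argument needs $\E |W|^{2k} \ll x^{2k-c}$ for some fixed $k \in \N$ and $c > 0$. This in turn reduces to estimating divisor-type sums of the form $\sum_{m_1 m_2' = m_1' m_2} N(m_1, m_2) N(m_1', m_2')$ and their higher-order analogues, using the pointwise bound $N(m_1, m_2) \ll x/(\max(m_1, m_2) \log x)$ from the prime number theorem to extract the required polynomial saving. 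Once this moment bound is in hand, Markov combined with the numerator estimate defines $G$, and assembling everything with the Harper--Lamzouri bound completes the proof.
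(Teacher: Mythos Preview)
Your setup and factorisation match the paper exactly, and the idea of conditioning on $\mathcal{F}$ and applying a bivariate normal approximation from Harper--Lamzouri is the same. The divergence is in \emph{which form} of that approximation you invoke, and this is precisely where your self-identified ``main obstacle'' comes from.

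You quote the result as a Lyapunov-ratio bound $\sum_p |c_p|^3 / V^{3/2}$ uniformly over rectangles, which forces you to prove $V \gg x$ with probability $1-O(x^{-1/20})$; that is why you are led to the fluctuation analysis of $W$ and the higher-moment divisor problem you describe. The paper avoids this entirely. It uses the Harper--Lamzouri lemma in its \emph{smooth test function} form (Lemma~\ref{l:normalapprox}): for $3$-times differentiable $h$,
\[
|\tilde{\E}h(\mathcal{W}) - \tilde{\E}h(\mathcal{Z})| \ll |h|_2 \Bigl(\sum_p |a(p)|^4\Bigr)^{1/2} + |h|_3 \sum_p |a(p)|^3,
\]
with $a(p) = \frac{(\log\log x)^{1/4}}{\sqrt{x}} c_p$. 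To pass to indicator functions of $E$, the paper sandwiches $\mathbf{1}_E$ between smooth majorants/minorants $h^\pm$ supported in a $\delta$-neighbourhood, so that $|h^\pm|_k \ll \delta^{-k}$. The price for the smoothing is the Gaussian mass in a strip of width $\delta$, which is $\ll \delta \sqrt{\mathcal{V}_1(x)}$. Crucially, every error term now involves only \emph{positive} powers of $\sum_p |c_p|^k$ for $k=2,3,4$; no lower bound on $V$ is needed. One then takes unconditional expectation, applies H\"older and Lemma~\ref{l:expectation} to get $\E \sum_p |c_p|^k \ll x^{k/4} (\log x)^{O(1)}$, and optimises $\delta = x^{-1/16}$.

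So your obstacle is an artefact of the formulation you chose, not a genuine feature of the problem. Either switch to the smooth test function version and the $\delta$-smoothing argument (which makes the fluctuation analysis of $W$ unnecessary), or be aware that carrying out your high-moment estimate for $W$ is real additional work that the paper does not do and does not need.
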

The main tool we will use to prove this proposition is (a reformulation of) Lemma 4.1 of~\citet{HarLam}.
\begin{lemma}[Normal approximation tool]\label{l:normalapprox} 
Let $f$ be a Steinhaus random multiplicative function. For $\mathcal{A}$ a finite set, and ${(a_p)}_{p \text{ prime}}$ a fixed sequence of complex numbers, let $W_1$ and $W_2$ be the real and imaginary parts of the sum 
\[ 
\sum_{x < p \leq y} f(p) a(p) 
\]
respectively, for positive reals $ x < y $ where $(x,y]$ contains at least one prime, and define the random vector $\mathcal{W} = {(W_1, W_2)}^T$. Let $\mathcal{Z}$ be a multivariate normal random vector with zero mean and the same covariance matrix as $\mathcal{W}$. For any $3$-times differentiable function $h \colon \R^2 \rightarrow \R$, we have
\[ 
| \E h(\mathcal{W}) - \E h(\mathcal{Z}) | \ll |h|_2 {\biggl( \sum_{x < p \leq y} {|a(p)|}^4 \biggr)}^{1/2} + |h|_3 \sum_{x < p \leq y} |a(p)|^3 , 
\]
where $|h|_2 = \sup_{c+d=2} \| \frac{\partial^2 h}{\partial x^c \partial y^d} \|_{\infty} $ and $|h|_3 = \sup_{c+d=3} \| \frac{\partial^3 h}{\partial x^c \partial y^d} \|_{\infty} $.
\end{lemma}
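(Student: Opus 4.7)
My plan is to prove the lemma by the Lindeberg swapping (replacement) method, which is the standard approach for quantitative multivariate CLTs against smooth test functions. Enumerate the primes $p_1, \ldots, p_N$ in $(x, y]$, and let $\xi_k = \bigl( \Re(f(p_k) a(p_k)), \Im(f(p_k) a(p_k)) \bigr)^\top \in \R^2$, so that $\mathcal{W} = \sum_k \xi_k$ with the $\xi_k$ independent. Using that $f(p_k)$ is uniform on the unit circle, so $\E f(p_k) = 0$ and $\E f(p_k)^2 = 0$, a direct computation yields $\E \xi_k = 0$ and $\mathrm{Cov}(\xi_k) = \tfrac{1}{2} |a(p_k)|^2 I_2$. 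Introduce independent bivariate Gaussians $\eta_k \sim N(0, \tfrac{1}{2} |a(p_k)|^2 I_2)$, also independent of the $\xi_k$; then $\sum_k \eta_k$ has the same distribution as $\mathcal{Z}$, so it suffices to estimate $|\E h(\sum_k \xi_k) - \E h(\sum_k \eta_k)|$.

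Form the hybrid sums $\mathcal{V}_k = \sum_{j < k} \xi_j + \sum_{j > k} \eta_j$, which are independent of $(\xi_k, \eta_k)$, and telescope
\[
\E h(\mathcal{W}) - \E h(\mathcal{Z}) = \sum_{k=1}^N \E \bigl[ h(\mathcal{V}_k + \xi_k) - h(\mathcal{V}_k + \eta_k) \bigr].
\]
Taylor-expand $u \mapsto h(\mathcal{V}_k + u)$ at $u = 0$ to second order; the zeroth-, first-, and second-order terms cancel in the difference after taking expectations, because $\xi_k$ and $\eta_k$ are each independent of $\mathcal{V}_k$ and share first and second moments. The cubic Taylor remainder is bounded pointwise by $C |h|_3 |u|^3$, and since $|\xi_k| = |a(p_k)|$ almost surely while $\E|\eta_k|^3 \ll |a(p_k)|^3$, each per-swap error is at most $C |h|_3 |a(p_k)|^3$. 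Summing over $k$ produces the $|h|_3 \sum_p |a(p)|^3$ contribution to the stated bound.

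For the $|h|_2 \bigl( \sum_p |a(p)|^4 \bigr)^{1/2}$ term I would use a finer analysis of the per-swap error. Truncating the Taylor expansion one order earlier leaves a second-order remainder bounded pointwise by $\tfrac{1}{2} |h|_2 |u|^2$; however, a naïve triangle-inequality bound from this yields only $|h|_2 \sum_p |a(p)|^2$, which is strictly weaker than the claimed square root. The correct form must instead arise by centering each per-swap remainder about its conditional mean (which is itself absorbed into the $|h|_3$ analysis above) and combining the remaining fluctuations in $\ell^2$ rather than $\ell^1$: applying Cauchy--Schwarz then converts a sum of per-swap variances of size $|h|_2^2 |a(p_k)|^4$ into the stated $|h|_2 \bigl( \sum_p |a(p)|^4 \bigr)^{1/2}$ bound after a square root. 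The main obstacle I expect is precisely this $\ell^1 \to \ell^2$ refinement, which is the technical heart of~\cite[Lemma~4.1]{HarLam} and which I would follow carefully; by contrast, the moment-matching and Lindeberg telescoping above are routine given that Steinhaus phases have vanishing pseudo-variance $\E f(p)^2 = 0$.
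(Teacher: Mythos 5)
Your approach is a self-contained Lindeberg replacement argument, whereas the paper's proof is a one-line reduction: it quotes Lemma~4.1 of~\cite{HarLam} with the specific choices $\mathcal{A} = \{1,2\}$, $c_p(1) = a(p)$, $c_p(2) = -i a(p)$, and $V_i$ independent Steinhaus variables, and leaves the normal approximation machinery entirely to that reference. So the two routes are genuinely different; yours is more explicit and more elementary, and it does succeed, though not for the reason you think.

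The core of your argument -- the moment matching $\E \xi_k = 0$, $\mathrm{Cov}(\xi_k) = \tfrac{1}{2}|a(p_k)|^2 I_2$ (using that the Steinhaus phase gives $\E f(p) = \E f(p)^2 = 0$), the hybrid sums $\mathcal{V}_k$, the telescoping identity, and the second-order Taylor expansion with cubic remainder -- is correct and produces the bound
\[
\bigl| \E h(\mathcal{W}) - \E h(\mathcal{Z}) \bigr| \ll |h|_3 \sum_{x < p \le y} |a(p)|^3 .
\]
Note that this alone already implies the lemma: the stated right-hand side is a \emph{sum} of two nonnegative terms, so bounding the error by the $|h|_3$ term is a strictly stronger statement than the one required. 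The $|h|_2\bigl(\sum_p |a(p)|^4\bigr)^{1/2}$ term is extra slack on the right-hand side, allowing the lemma to be applied usefully in regimes where $|h|_3$ is much larger than $|h|_2$; it does not need to be established separately in order to prove the lemma as stated.

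Your proposed derivation of the $|h|_2$ term, however, is not sound as sketched, and you should not pursue it. After telescoping, one is bounding $\sum_k \E\bigl[h(\mathcal{V}_k + \xi_k) - h(\mathcal{V}_k + \eta_k)\bigr]$: each summand is already an expectation, hence a deterministic number, so there is no ``per-swap fluctuation'' to center and no variance to combine in $\ell^2$. The $\ell^1 \to \ell^2$ refinement you describe has no object to act on within the Lindeberg framework. In Harper--Lamzouri's actual argument, the $\bigl(\sum_p |a(p)|^4\bigr)^{1/2}$ term has a different origin (roughly, Cauchy--Schwarz applied to the deviation of an empirical covariance $\sum_k \xi_k\xi_k^{\top}$ from its mean, a Stein's-method-flavoured step), but since you have already proved the lemma via the $|h|_3$ bound, chasing that mechanism here is unnecessary. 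The fix to your write-up is simply to delete the final paragraph and observe that $|h|_3 \sum_p |a(p)|^3 \le |h|_2\bigl(\sum_p |a(p)|^4\bigr)^{1/2} + |h|_3 \sum_p |a(p)|^3$, so the bound you obtained is already of the required form.
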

\begin{proof}
This follows from an application of Lemma 4.1 of~\citet{HarLam}. In that result, we take $\mathcal{A} = \{ 1, 2 \}$, and for each $a \in \mathcal{A}$ let $c_i (a)$ be supported only on primes, with $ c_p (1) = a(p) $ and $c_p (2) = -i a(p)$. We take $V_i$ to be independent Steinhaus random variables, $m = \pi (y) - \pi(x)$, and $K = \sqrt{\pi(y) - \pi (x)}$, assuming that both these latter terms are non-zero. Seeing as our sum has mean zero, the Gaussian approximation does too.
\end{proof}
\begin{proof}[Proof of Proposition~\ref{p:steincna}] 
Let $\tilde{\E}$ and $\tilde{\p}$ denote the probability measure in the Steinhaus case conditioned on ${\bigl( f(p) \bigr)}_{p \leq \sqrt{x}}$. We have
\[ 
\frac{{(\log \log x)}^{1/4}}{\sqrt{x}} \sum_{\substack{n \leq x \\ P(n) > \sqrt{x}}} f(n) = \frac{{(\log \log x)}^{1/4}}{\sqrt{x}} \sum_{\sqrt{x} < p \leq x} f(p) \sum_{m \leq x/p} f(m) .
\]
Conditioning on ${\bigl(f(p)\bigr)}_{p \leq \sqrt{x}}$, we apply Lemma~\ref{l:normalapprox} with
\[ 
a(p) = \frac{{(\log \log x)}^{1/4}}{\sqrt{x}} \sum_{m \leq x/p} f(m).
\] 
In the lemma, we then have $\mathcal{W} = {(W_1, W_2)}^T$, where $W_1$ and $W_2$ denote the real and imaginary parts of 
\[ 
\frac{{(\log \log x)}^{1/4}}{\sqrt{x}} \sum_{\sqrt{x} < p \leq x} f(p) \sum_{m \leq x/p} f(m) , 
\] 
respectively. One can calculate that the covariance matrix of the normal approximation $\mathcal{Z}$ is
\[ 
\mathrm{Cov}_{i,j} = 
\begin{cases} 
\frac{{(\log \log x)}^{1/2}}{2x} \sum_{\sqrt{x} < p \leq x} \Bigl| \sum_{m \leq x/p} f(m) \Bigr|^2 \, &, \text{ if } i = j \\
0 \, &, \text{ otherwise.}
\end{cases} 
\]
Therefore, the conditional distribution of the partial sums is approximated by a complex Gaussian distribution with variances as given in Proposition~\ref{p:steincna}\footnote{Note that a standard complex Gaussian is the complex embedding of a two-dimensional Gaussian with covariance matrix $\frac{1}{2} I_2$. This is why the factor of $\frac{1}{2}$ does not appear in Proposition~\ref{p:steincna}.}. Now, for any $3$-times differentiable function $h \colon \R^2 \rightarrow \R$, we have 
\begin{multline}\label{equ:happrox}
| \tilde{\E} h(\mathcal{W}) -\tilde{\E} h(\mathcal{Z}) | \ll \\ \frac{|h|_2 {(\log \log x)}^{1/2}}{x} \sqrt{\sum_{\sqrt{x} < p \leq x} \Bigl| \sum_{m \leq x/p} f(m) \Bigr|^4} + \frac{|h|_3 {(\log \log x)}^{3/4}}{x^{3/2}} \sum_{\sqrt{x} < p \leq x} \Bigl| \sum_{m \leq x/p} f(m) \Bigr|^3 . 
\end{multline}
We begin by taking $h$ to be a $3$-times differentiable function $h \colon \R^2 \rightarrow \R$ that is a good approximation to the indicator function for $ E = (-\infty, a) \times (-\infty, b) $, for fixed $a, b \in \R$. Similarly to~\cite[Lemma~4.3]{HarLam}, we define
\[ \varphi^+ (x) = 
\begin{cases}
1 \, &\text{, if } x \leq 0 \\
f(x) \, &\text{, if } x \in (0,\delta) \\
0 \, &\text{, if } x \geq \delta
\end{cases} ,
\hspace{1cm}
\varphi^- (x) =
\begin{cases}
1 \, &\text{, if } x \leq - \delta \\
g(x) \, &\text{, if } x \in (- \delta,0) \\
0 \, &\text{, if } x \geq 0
\end{cases} ,
\]
for some $\delta > 0$ chosen later, where $f,g$ take values in $[0,1]$ so that $\varphi^\pm$ are 3-times differentiable. These can be chosen in such a way so that we have the derivative bounds $\|\frac{d^i \varphi^{\pm}}{d x^i} \|_{\infty} \ll \frac{1}{\delta^i}$ that hold uniformly for $i \in \{ 1,2,3 \}$. Now, for $\mathbf{x} = {(x_1, x_2)}^T$, consider the majorising and minorising functions for the indicator function of the set $ E = (-\infty, a) \times (-\infty, b) $ given by $h^+ (\mathbf{x}) \coloneqq \varphi^+ (x_1 - a) \varphi^+ (x_2 - b)$ and $ h^- (\mathbf{x}) \coloneqq \varphi^- (x_1 - a) \varphi^- (x_2 - b) $, respectively. We have 
\[ 
\tilde{\E} (h^- (\mathcal{W})) \leq \tilde{\p} (\mathcal{W} \in E) \leq \tilde{\E} (h^+ (\mathcal{W})) ,
\]
and it follows that 
\[
| \tilde{\p} (\mathcal{W} \in E) - \tilde{\p} (\mathcal{Z} \in E) | \leq | \tilde{\E} ( h^- (\mathcal{W}) ) - \tilde{\p} (\mathcal{Z} \in E) | + | \tilde{\E} ( h^+ (\mathcal{W}) ) - \tilde{\p} (\mathcal{Z} \in E) |. 
\]
We will focus on the first term on the right-hand side, with the second term being handled similarly. By the triangle inequality,
\[ 
| \tilde{\E} ( h^- (\mathcal{W}) ) - \tilde{\p} ( \mathcal{Z} \in E ) | \leq | \tilde{\E} ( h^- (\mathcal{W}) ) - \tilde{\E} ( h^- (\mathcal{Z}) ) | + | \tilde{\E} (h^- (\mathcal{Z})) - \tilde{\p} (\mathcal{Z} \in E) |. 
\]
We bound the first term using~\eqref{equ:happrox}, and the second term using standard results on multivariate Gaussians. Note that the second term is maximised when $a,b$ are near $0$, since this is when the density is largest. One can deduce the uniform bound 
\begin{multline*}
|\tilde{\p} (\mathcal{W} \in E) - \tilde{\p} (\mathcal{Z} \in E) | \ll \frac{{(\log \log x)}^{1/2}}{\delta^2 x} \sqrt{\sum_{\sqrt{x} < p \leq x} \Bigl| \sum_{m \leq x/p} f(m) \Bigr|^4} \\ + \frac{{(\log \log x)}^{3/4}}{\delta^3 x^{3/2}} \sum_{\sqrt{x} < p \leq x} \Bigl| \sum_{m \leq x/p} f(m) \Bigr|^3 + \delta \sqrt {\frac{{(\log \log x)}^{1/2}}{x} \sum_{\sqrt{x} < p \leq x} \Bigl| \sum_{m \leq x/p} f(m) \Bigr|^2} .
\end{multline*}
We now take the expectation, noting that $\E \tilde{\p} (A) = \p (A)$ for any event $A$. Using Hölder's inequality and applying Lemma~\ref{l:expectation} and the fact that $\sum_{n \leq x} \tau_k (n) \ll_k x {(\log x)}^{k-1}$, we have
\[
\p (\mathcal{W} \in E) = \p (\mathcal{Z} \in E) + O \biggl( \frac{{(\log x)}^2}{x^{1/4} \delta^2} + \frac{{(\log x)}^3}{x^{1/4} \delta^3} + \delta {(\log \log x)}^{1/4} \biggr) .
\]
Taking $\delta = x^{-1/16}$, say, we certainly have
\[ 
\p (\mathcal{W} \in E) = \p (\mathcal{Z} \in E) + O ( x^{-1/20} ) . 
\]
This is precisely the statement of Proposition~\ref{p:steincna}.
\end{proof}
\section{Analysis of the variance: proof of Theorem~\ref{t:variance}}\label{s:varianceanalysis}
\subsection{Initial analysis}
In light of Proposition~\ref{p:steincna}, to understand the distribution of our partial sums, it suffices to understand the distribution of the variance of the Gaussian, which is equal to $\frac{{(\log \log x)}^{1/2}}{x} \sum_{\sqrt{x} < p \leq x} \bigl| \sum_{m \leq x/p} f(m) \bigr|^2 $. We begin by analysing the probable behaviour of this quantity. Throughout this section, we will make frequent use of the notation $F_j (s) = \prod_{p \leq \sqrt{x}^{e^{-j}}} {\bigl( 1 - \frac{f(p)}{p^s} \bigr)}^{-1}$ in the Steinhaus case and $F_j (s) = \prod_{p \leq \sqrt{x}^{e^{-j}}} {\bigl( 1 + \frac{f(p)}{p^s} \bigr)}$ in the Rademacher case, and we write $F$ in place of $F_0$.
\begin{proposition}\label{p:perron}
Define
\[ 
\mathcal{V}_1 (x) = \frac{{(\log \log x)}^{1/2}}{x} \sum_{\sqrt{x} < p \leq x} \bigl| \sum_{m \leq x/p} f(m) \bigr|^2,
\]
and define $\mathcal{V}_2 (x)$ as
\[ 
\frac{{(\log \log x)}^{1/2}}{4 \pi^2} \iint_{\substack{|t_1|, |t_2| \leq {(\log \log x)}^4 \\ |t_1 - t_2| \leq \frac{{(\log \log x)}^5}{\log x}}} \frac{F (1/2 + i t_1) \conj{F (1/2 + i t_2) }}{(1/2 + it_1)(1/2 - it_2)} \sum_{\sqrt{x} < p \leq x} \frac{x^{i(t_1 - t_2)}}{p^{1 + i (t_1 - t_2)}} d t_1 d t_2, 
\]
then in both the Steinhaus and the Rademacher cases, we have
\[
| \mathcal{V}_1 (x) - \mathcal{V}_2 (x) | \ll \frac{1}{\log \log x},
\]
with probability $1 - O \bigl( {(\log \log x)}^{-1} \bigr)$.
\end{proposition}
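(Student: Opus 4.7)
\noindent\textbf{Proof proposal for Proposition~\ref{p:perron}.} The plan is to apply Perron's formula to the inner sum $\sum_{m\le x/p} f(m)$, shift the contour to the critical line, multiply by its conjugate, and then identify the correct truncation ranges using pointwise and mean-square bounds for Euler products. Throughout, estimates will be done in expectation (or second moment) and upgraded to high-probability statements via Markov's inequality, which explains the $1 - O((\log\log x)^{-1})$ probability in the conclusion.

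First I would note that for $p > \sqrt{x}$ we have $x/p < \sqrt{x}$, so $\sum_{m\le x/p} f(m)$ involves only the randomness $(f(q))_{q\le \sqrt{x}}$, and its generating Dirichlet series on $\Re s > 1$ coincides with $F(s)=\prod_{p\le \sqrt{x}}(1-f(p)/p^s)^{-1}$ (or the Rademacher analogue). An effective form of Perron's formula gives
\[
\sum_{m\le x/p} f(m) \;=\; \frac{1}{2\pi i}\int_{c-iT}^{c+iT} F(s)\,\frac{(x/p)^s}{s}\,ds \;+\; R(p),
\]
with $c=1+1/\log x$ and $T=x^{1/3}$, say. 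Using the standard quantitative Perron error together with Lemma~\ref{l:expectation} applied to $\sum_m \tau(m) f(m)/\cdots$, I would show that $\E\bigl(\frac{1}{x}\sum_{\sqrt{x}<p\le x}|R(p)|^2\bigr)\ll x^{-1/10}$, which removes the Perron error with overwhelming probability. I would then shift both contours to $\Re s=1/2$, controlling the horizontal pieces $\Re s\in[1/2,c]$, $\Im s=\pm T$ by crude convexity / polynomial bounds on $F$ (again an expectation argument suffices, since $\E|F(\sigma+iT)|^2\ll \log x$ uniformly in our ranges).

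After multiplying by the complex conjugate integral and using $(x/p)^{1/2+it_1}\overline{(x/p)^{1/2+it_2}} = x^{1+i(t_1-t_2)}/p^{1+i(t_1-t_2)}$, and then summing over primes and dividing by $x$, one arrives at
\[
\mathcal{V}_1(x) \;=\; \frac{(\log\log x)^{1/2}}{4\pi^2}\iint_{|t_1|,|t_2|\le T}\frac{F(1/2+it_1)\overline{F(1/2+it_2)}}{(1/2+it_1)(1/2-it_2)} \sum_{\sqrt{x}<p\le x}\frac{x^{i(t_1-t_2)}}{p^{1+i(t_1-t_2)}}\,dt_1\,dt_2
\]
up to a negligible additive error. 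Next I would truncate the $t_i$-range from $|t_i|\le T$ down to $|t_i|\le (\log\log x)^4$. The contribution from $|t_1|>(\log\log x)^4$ is bounded in expectation via Cauchy--Schwarz and Lemma~\ref{l:epstein} (resp.\ Lemma~\ref{l:eprad}), which yields $\E|F(1/2+it_1)\overline{F(1/2+it_2)}|\ll \log x$ uniformly in $(t_1,t_2)$, combined with the trivial bound $|\sum_p p^{-1-iu}|\le \log\log x$. The denominator $|(1/2+it_1)(1/2-it_2)|^{-1}$ provides the decay $\int_{|t_1|>(\log\log x)^4}dt_1/|1/2+it_1|^2 \ll (\log\log x)^{-4}$, which more than suffices.

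The main obstacle is the off-diagonal truncation $|t_1-t_2|\le (\log\log x)^5/\log x$. Here the key input is Lemma~\ref{l:pwprimebound}, which gives
\[
\Bigl|\sum_{\sqrt{x}<p\le x}\frac{1}{p^{1+i(t_1-t_2)}}\Bigr|\ll \frac{1}{|t_1-t_2|\log x}+ (1+|t_1-t_2|)e^{-c\sqrt{\log x}}
\]
whenever $|t_1-t_2|\neq 0$. Splitting the region $|t_1-t_2|>(\log\log x)^5/\log x$ into dyadic shells $|t_1-t_2|\asymp 2^j/\log x$ with $j\ge 5\log_2\log\log x$, on each shell I would bound the Euler-product factor in expectation using Lemma~\ref{l:epstein} (which delivers the decorrelation bound $\E|F(1/2+it_1)F(1/2+it_2)|\ll \log x\cdot(1/|t_1-t_2|\log x + \cdots)^{1/2}$), combined with the prime-sum decay above and the trivial $1/|1/2+it_i|^2$ decay to control the $t_i$-integration. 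Summing over dyadic shells produces a total expected contribution of size $O((\log\log x)^{-2})$, say, and Markov's inequality then promotes this to a high-probability bound $\ll(\log\log x)^{-1}$. Combining all the steps yields $|\mathcal{V}_1(x)-\mathcal{V}_2(x)|\ll 1/\log\log x$ with probability $1-O((\log\log x)^{-1})$, as required.
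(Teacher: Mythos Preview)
Your overall architecture matches the paper's proof, but the step where you truncate $|t_i|$ from $T=x^{1/3}$ directly down to $(\log\log x)^4$ has a real gap. The denominator in the double integral is $(1/2+it_1)(1/2-it_2)$, so after taking absolute values you only get a factor $|1/2+it_1|^{-1}|1/2+it_2|^{-1}$, not $|1/2+it_1|^{-2}$. Thus your claimed bound $\int_{|t_1|>(\log\log x)^4} dt_1/|1/2+it_1|^2\ll (\log\log x)^{-4}$ is not what the integrand gives you; with the trivial bound $|\sum_p p^{-1-iu}|\ll\log\log x$ and $\E|F\overline{F}|\ll\log x$, the expected contribution from $\max(|t_1|,|t_2|)>(\log\log x)^4$ is of order $(\log\log x)^{3/2}\log x\int\int dt_1\,dt_2/(|1/2+it_1||1/2+it_2|)$, which blows up logarithmically in both variables over $|t_i|\le x^{1/3}$.

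The paper avoids this by truncating in two stages. First it reduces $\max(|t_1|,|t_2|)\le x^{1/3}$ to $\max(|t_1|,|t_2|)\le(\log x)^2$ using not the trivial bound on the prime sum but the mean-value estimate of Lemma~\ref{l:meansqlargeprimes}: Cauchy--Schwarz in the inner variable gives
\[
\int_{|t_2|\le x^{1/3}}\Bigl|\sum_{\sqrt{x}<p\le x}\frac{1}{p^{1+i(t_1-t_2)}}\Bigr|^2 dt_2 \ll \frac{1}{\log x},
\]
and this extra saving, paired with $\int_{|t_2|>(\log x)^2}|1/2+it_2|^{-2}dt_2\ll(\log x)^{-2}$, is what makes the tail negligible. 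Only \emph{after} the off-diagonal truncation $|t_1-t_2|\le(\log\log x)^5/\log x$ (which forces $|t_1|\asymp|t_2|$, so the denominator effectively becomes $|1/2+it_1|^{-2}$) does the paper further reduce to $|t_i|\le(\log\log x)^4$. You should insert this intermediate truncation and invoke Lemma~\ref{l:meansqlargeprimes}; with that fix, the rest of your outline (Perron, contour shift, off-diagonal via Lemma~\ref{l:pwprimebound} plus the Euler product correlation bounds) is essentially the paper's argument.
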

In the quantity $\mathcal{V}_2 (x)$, it can quite easily be shown that with high probability we have approximate equality of the Euler products on small primes.
\begin{proposition}\label{p:smallprimes}
Let $\mathcal{V}_2 (x)$ be the same as in Proposition~\ref{p:perron} and define $\mathcal{V}_3 (x)$ as
\[
\frac{{(\log \log x)}^{1/2}}{4 \pi^2} \iint_{\substack{|t_1|, |t_2| \leq {(\log \log x)}^4 \\ |t_1 - t_2| \leq \frac{{(\log \log x)}^5}{\log x}}} \frac{|F_{y} (1/2 + i t_1)|^2 I_y (t_1) \conj{I_y (t_2)}}{|1/2 + it_1|^2} \sum_{\sqrt{x} < p \leq x} \frac{x^{i(t_1 - t_2)}}{p^{1 + i (t_1 - t_2)}} d t_1 d t_2 ,
\]
where $y = \lfloor 100 \log \log \log x \rfloor$, and 
\[ 
I_y (t) \coloneqq \prod_{\sqrt{x}^{e^{-y}} < p \leq \sqrt{x}} {\Bigl( 1 - \frac{f(p)}{p^{1/2 + i t}} \Bigr)}^{-1}, \quad \text{and} \quad I_y (t) \coloneqq \prod_{\sqrt{x}^{e^{-y}}< p \leq \sqrt{x}} {\Bigl( 1 + \frac{f(p)}{p^{1/2 + i t}} \Bigr)} ,
\]
in the Steinhaus and Rademacher cases, respectively.
Then
\[ 
| \mathcal{V}_2 (x) - \mathcal{V}_3 (x) | \leq \frac{1}{{(\log \log x)}^{5}} , 
\]
with probability $1 - O \bigl({(\log \log x )}^{-5}\bigr)$.
\end{proposition}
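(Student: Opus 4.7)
The plan is to bound $\E|\mathcal{V}_2 - \mathcal{V}_3|$ directly and apply Markov's inequality. The crucial observation is that on the integration region (where $|t_1 - t_2| \leq (\log \log x)^5/\log x$ and $|t_1|, |t_2| \leq (\log \log x)^4$), every prime $p \leq B(x) = \sqrt{x}^{e^{-y}}$ satisfies $|(t_1 - t_2)\log p| \leq |t_1 - t_2|\log B(x) \ll (\log \log x)^{-95}$, so the short Euler product $F_y$ varies negligibly between $t_1$ and $t_2$. Let $\mathcal{V}_{2.5}$ be defined like $\mathcal{V}_2$ but with the denominator $(1/2+it_1)(1/2-it_2)$ replaced by $|1/2+it_1|^2$; then $|\mathcal{V}_2 - \mathcal{V}_3| \leq |\mathcal{V}_2 - \mathcal{V}_{2.5}| + |\mathcal{V}_{2.5} - \mathcal{V}_3|$ by the triangle inequality, and I handle the two pieces separately.

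For the denominator-replacement piece, the identity $\frac{1}{(1/2+it_1)(1/2-it_2)} - \frac{1}{|1/2+it_1|^2} = \frac{i(t_2-t_1)}{(1/2+it_1)(1/2-it_1)(1/2-it_2)}$ contributes an explicit factor $O(|t_1-t_2|)$ to the integrand. By independence of $F_y$ and $I_y$ (they depend on disjoint sets of primes), together with the Mertens-type estimate $\E|F_y(1/2+it)|^2 \ll \log B(x)$ and the bound $\E|I_y(t_1) I_y(t_2)| \ll e^y$ from Lemma~\ref{l:epstein}, the expected integrand is at most $O(|t_1-t_2|\log x)$ divided by $|(1/2+it_1)(1/2-it_1)(1/2-it_2)|$. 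A direct computation shows that this weighted double integral over the region is $O(h^2)$, where $h \coloneqq (\log \log x)^5/\log x$, and multiplying by the prefactor $(\log \log x)^{1/2}$ and the $O(1)$ pointwise bound on the prime sum gives $\E|\mathcal{V}_{2.5} - \mathcal{V}_3| \ll (\log \log x)^{10.5}/\log x$.

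For the Euler-product piece, the numerator difference is $F_y(1/2+it_1)\cdot \conj{F_y(1/2+it_2) - F_y(1/2+it_1)}$, and the key moment input is
\[ \E|F_y(1/2+it_1) - F_y(1/2+it_2)|^2 = 2\E|F_y|^2 - 2\Re \E\bigl[F_y(1/2+it_1)\conj{F_y(1/2+it_2)}\bigr] . \]
The mixed moment factors over primes as $\prod_{p \leq B(x)}(1-1/p^{1+i(t_1-t_2)})^{-1}$ in the Steinhaus case (respectively $\prod_{p \leq B(x)}(1+1/p^{1+i(t_1-t_2)})$ in the Rademacher case). Expanding its logarithm and comparing with $\log \E|F_y|^2$ via $|1-p^{-i(t_1-t_2)}| \ll |t_1-t_2|\log p$ gives a log-ratio of $O(|t_1-t_2|\log B(x)) = O((\log \log x)^{-95})$, so $\E|F_y(1/2+it_1) - F_y(1/2+it_2)|^2 \ll \log B(x)\,(\log \log x)^{-95}$. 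Cauchy--Schwarz then yields $\E\bigl[|F_y(1/2+it_1)| \cdot |F_y(1/2+it_2) - F_y(1/2+it_1)|\bigr] \ll \log B(x)\,(\log \log x)^{-95/2}$. Combining with independence from $I_y$, the bound $\E|I_y(t_1) I_y(t_2)| \ll e^y$, integration over the region (whose area is $O((\log \log x)^9/\log x)$), and the prefactor $(\log \log x)^{1/2}$, I obtain $\E|\mathcal{V}_2 - \mathcal{V}_{2.5}| \ll (\log \log x)^{-38}$.

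Summing the two bounds, $\E|\mathcal{V}_2 - \mathcal{V}_3| \ll (\log \log x)^{10.5}/\log x = o((\log \log x)^{-10})$ for large $x$, so Markov's inequality gives $|\mathcal{V}_2 - \mathcal{V}_3| \leq (\log \log x)^{-5}$ with probability at least $1 - O((\log \log x)^{-5})$. The main technical step is the mixed-moment computation for $F_y$ at two nearby arguments, but this reduces to a clean Euler product identity together with a one-line Taylor estimate, so it is not a serious obstacle; the remaining work is routine bookkeeping of the Mertens and Cauchy--Schwarz bounds.
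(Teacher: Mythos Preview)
Your argument is correct in substance, though two cosmetic slips crept in: the labels on the two pieces are swapped (the denominator-replacement piece is $\mathcal{V}_2-\mathcal{V}_{2.5}$, not $\mathcal{V}_{2.5}-\mathcal{V}_3$, and vice versa for the Euler-product piece), and in your final line the dominant term is $(\log\log x)^{-38}$, not $(\log\log x)^{10.5}/\log x$. Neither affects the conclusion, since both bounds are $o((\log\log x)^{-10})$ and Markov finishes as you say.

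Your route differs from the paper's in two places. First, the paper does not introduce an intermediate $\mathcal{V}_{2.5}$; it writes the full integrand difference as
\[
\frac{F(1/2+it_1)\,\overline{I_y(t_2)}}{1/2+it_1}\Bigl(\frac{\overline{F_y(1/2+it_1)}}{1/2-it_1}-\frac{\overline{F_y(1/2+it_2)}}{1/2-it_2}\Bigr)
\]
and applies Cauchy--Schwarz in $(t_1,t_2)$ to separate $|F/(1/2+it_1)|^2$ from the rest, followed by Cauchy--Schwarz on the expectation. Second, and more interestingly, for the key bound on $\E|F_y(1/2+it_1)-F_y(1/2+it_2)|^2$ the paper expands $F_y$ as a Dirichlet series, writes the second moment as $\sum_{P(n)\le B(x)}|n^{-it_1}-n^{-it_2}|^2/n$, splits at a threshold $M(x)=x^{1/(\log\log x)^{50}}$, and applies Rankin's trick to the tail. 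You instead compute the mixed moment $\E[F_y(1/2+it_1)\overline{F_y(1/2+it_2)}]$ exactly as an Euler product and Taylor-expand the log-ratio with $\E|F_y|^2$, exploiting that $|t_1-t_2|\log B(x)\ll(\log\log x)^{-95}$ directly. Your argument is slicker and avoids the auxiliary cutoff; the paper's Dirichlet-series approach is more elementary in that it sidesteps the exact mixed-moment identity. Both give the proposition with plenty of room.
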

\begin{proof}[Proof of Proposition~\ref{p:perron}]
Let $\sum_{n \leq y}'$ denote the sum up to $y$ where, if $y$ is an integer, the last term is counted with weight $1/2$. Note that 
\begin{multline*}
\frac{{(\log \log x)}^{1/2}}{x} \sum_{\sqrt{x} < p \leq x} \bigl| \sum_{m \leq x/p} f(m) \bigr|^2 = \frac{{(\log \log x)}^{1/2}}{x} \sum_{\sqrt{x} < p \leq x} \bigl| \sideset{}{'}\sum_{m \leq x/p} f(m) \bigr|^2 \\ 
+ O \biggl( \frac{{(\log \log x)}^{1/2}}{x} \sum_{\sqrt{x} < p \leq x} \bigl| \sum_{m \leq x/p} f(m) \bigr| + \frac{{(\log \log x)}^{1/2}}{\log x} \biggr),
\end{multline*}
The expectation of the first error term is $\leq \frac{{(\log \log x)}^{1/2}}{x} \sum_{\sqrt{x} < p \leq x} {( \E \bigl| \sum_{m \leq x/p} f(m) \bigr|^2 )}^{1/2} \ll \frac{{(\log \log x)}^{1/2}}{\log x}$, so an application of Markov's inequality with first moments allows one to find that
\begin{equation}\label{equ:modifiedperronsum}
\mathcal{V}_1 (x) = \frac{{(\log \log x)}^{1/2}}{x} \sum_{\sqrt{x} < p \leq x} \bigl| \sideset{}{'}\sum_{m \leq x/p} f(m) \bigr|^2 + O \biggl( \frac{1}{{(\log x)}^{1/3}} \biggr),
\end{equation}
with probability $1 - O({(\log x)}^{-1/2})$. We now apply Perron's formula to rewrite the modified sum. By~\citet[Corollary~5.3]{MVmultnt}, taking $T=x^{1/3}$, we find that $\sum_{m \leq x/p}' f(m) $ is equal to
\[
\frac{1}{2 \pi i} \int_{1-i x^{1/3}}^{1+i x^{1/3}} \frac{F (s) x^{s}}{s p^s} \, ds + O \Biggl( \sum_{\substack{x/2p < n \leq 2x/p \\ n \neq x/p}} \min \biggl\{ 1, \frac{x^{2/3}}{p|x/p - n|} \biggr\} + \frac{x^{2/3}}{p} \sum_{n \leq x/p} \frac{1}{n} \Biggr),
\]
uniformly for all primes $\sqrt{x} < p \leq x$. For $\alpha \in \R$, let $\| \alpha \|$ denote the distance from $\alpha$ to the nearest integer. A straightforward calculation shows that the ``big Oh'' term is $ \ll \mathbf{1}_{\| x/p \| \leq x^{-1/30}} + \frac{x^{7/10} \log x}{p}$. Shifting the contour to the left, we have
\[
\sideset{}{'}\sum_{m \leq x/p} f(m) = \frac{1}{2 \pi i} \int_{1/2-i x^{1/3}}^{1/2+i x^{1/3}} \frac{F (s) x^{s}}{s p^s} \, ds + O \bigl( E (x,p) \bigr),
\]
where
\[
E (x,p) = \mathbf{1}_{\| x/p \| \leq x^{-1/30}} + \frac{x^{7/10} \log x}{p} + \frac{1}{x^{1/3}} \int_{1/2}^{1} \bigl( |F (\sigma + i x^{1/3})| + |F (\sigma - i x^{1/3})| \bigr) {\biggl( \frac{x}{p} \biggr)}^{\sigma} \, d \sigma.
\]
From this, we deduce that $\frac{{(\log \log x)}^{1/2}}{x} \sum_{\sqrt{x} < p \leq x} \bigl| \sum_{m \leq x/p}' f(m) \bigr|^2 $ is equal to
\begin{equation}\label{equ:longperron}
\frac{{(\log \log x)}^{1/2}}{4 \pi^2} \iint_{|t_1|, |t_2| \leq x^{1/3}} \frac{F (1/2 + i t_1) \conj{F (1/2 + i t_2)}}{(1/2 + i t_1)(1/2 - i t_2)} x^{i (t_1 - t_2)} \sum_{\sqrt{x} < p \leq x} \frac{1}{p^{1+i(t_1-t_2)}} \, d t_1 \, d t_2 ,
\end{equation}
plus an error term of size
\[
\ll \frac{{(\log \log x)}^{1/2}}{x} \sum_{\sqrt{x} < p \leq x} \biggl( \frac{x^{1/2} E (x,p)}{p^{1/2}} \int_{1/2-i x^{1/3}}^{1/2+i x^{1/3}} \frac{|F (1/2 + it)|}{|1/2 + it|} \, dt + {E (x,p)}^2 \biggr)
\]
Using the facts that $\E |F (\sigma + ix^{1/3})|^2 \ll {\log x} $ and $\E |F(1/2 + i t)|^2 \ll {\log x} $ (see Euler Product Results~2.1 and~2.2 of~\cite{HarperHM}), one can apply Markov's inequality and Cauchy--Schwarz to find that this error is $\ll x^{-1/100}$ with probability $1 - O(x^{-1/100})$, say. From this and~\eqref{equ:modifiedperronsum}, we deduce that $\mathcal{V}_1 (x)$ is equal to~\eqref{equ:longperron} plus an error of size $\ll {(\log x)}^{-1/3}$, with probability $1 - O({(\log x)}^{-1/2})$. \\ 
The goal now is to reduce the range of integration in~\eqref{equ:longperron}. This follows similarly to~\citet[Section~3.2]{HarperLargeFluct}, yet we need to reduce the range more so that the Euler products over small primes are approximately equal over the entire range of integration. One would expect that, with high probability, the dominant contribution to the integral in~\eqref{equ:longperron} comes from points where $t_1$ and $t_2$ are relatively close. When this happens, the Euler products can reinforce one another and make a large contribution. Conversely, when $t_1$ and $t_2$ differ by some amount $\gg \frac{1}{\log x}$, the Euler products $F (1/2 + i t_1)$ and $\conj{F (1/2 + i t_2)}$ begin to decorrelate on large primes, giving a smaller contribution to the integral. Therefore, we should be able to reduce the domain of integration to something resembling (but maybe not quite) $ |t_1 - t_2 | \leq \frac{1}{\log x} $. We begin by removing the part of the integral where $\max \{ |t_1|, |t_2| \} > {(\log x)}^2$, again making use of the fact that $\E |F(1/2 + i t)|^2 \ll {\log x} $ in both the Steinhaus and Rademacher cases. By symmetry, the contribution from points where $\max \{ |t_1|, |t_2| \} > {(\log x)}^2$ is
\begin{multline*}
\ll {(\log \log x)}^{1/2} \int_{-x^{1/3}}^{x^{1/3}} \frac{|F (1/2 + it_1)|}{1 + |t_1|} \\
\Biggl( \int_{\max \{ |t_1|, {(\log x)}^2 \} \leq |t_2| \leq x^{1/3}} \frac{|F (1/2 + it_2)|}{1 + |t_2|} \biggl| \sum_{\sqrt{x} < p \leq x} \frac{1}{p^{1 + i(t_1 - t_2)}} \biggr| \, dt_2 \Biggr) \, dt_1 , 
\end{multline*}
the expectation of which is
\[ 
\ll (\log x) {(\log \log x)}^{1/2} \int_{-x^{1/3}}^{x^{1/3}} \frac{1}{1 + |t_1|} \int_{\max \{ |t_1|, {(\log x)}^2 \} \leq |t_2| \leq x^{1/3}} \frac{1}{1 + |t_2|} \biggl| \sum_{\sqrt{x} < p \leq x} \frac{1}{p^{1 + i(t_1 - t_2)}} \biggr| \, dt_2 \, dt_1 .
\]
By Cauchy--Schwarz and Lemma~\ref{l:meansqlargeprimes}, the innermost integral is 
\begin{align*}
\ll & {\Biggl( \int_{\max \{ |t_1|, {(\log x)}^2 \} \leq |t_2| \leq x^{1/3}} \frac{d t_2}{1 + |t_2|^2} \int_{|t_2| \leq x^{1/3}} \biggl| \sum_{\sqrt{x} < p \leq x} \frac{1}{p^{1 + i (t_1 - t_2)}} \biggr|^2 \, d t_2 \Biggr)}^{1/2} \\
\ll & {\Biggl( \frac{1}{\max \big\{ |t_1|, {(\log x)}^2 \big\}} \times \frac{1}{\log x} \Biggr)}^{1/2}.
\end{align*}
Now integrating over $t_1$, the expectation of the contribution to~\eqref{equ:longperron} from points where $\max \{ |t_1|, |t_2| \} > {(\log x)}^2$ is 
\[ 
\ll {\biggl( \frac{\log \log x}{\log x} \biggr)}^{1/2} \int_{-{(\log x)}^2}^{{(\log x)}^2} \frac{dt_1}{1 + |t_1|} + {(\log x)}^{1/2} {(\log \log x)}^{1/2} \int_{|t_1| > {(\log x)}^2} \frac{dt_1}{|t_1|^{3/2}} \ll \frac{{(\log \log x)}^{3/2}}{{(\log x)}^{1/2}} .
\]
We deduce that our variances, $\mathcal{V}_1(x)$, are equal to
\[
\frac{{(\log \log x)}^{1/2}}{4 \pi^2} \iint_{|t_1|, |t_2| \leq {(\log x)}^2} \frac{F (1/2 + i t_1) \conj{F (1/2 + i t_2)}}{(1/2 + i t_1)(1/2 - i t_2)} x^{i (t_1 - t_2)} \sum_{\sqrt{x} < p \leq x} \frac{1}{p^{1+i(t_1-t_2)}} \, d t_1 \, d t_2 ,
\]
up to an additive error of size $O ( {(\log x )}^{-1/10} )$, with probability $1 - O ( {(\log x)}^{-1/10} )$, in both the Steinhaus and Rademacher cases. We now reduce to the range $|t_1 - t_2| \leq \frac{{(\log \log x)}^{5}}{\log x}$, using the decorrelation of the Euler products to show that the complement of this range contributes relatively little, with high probability. Using Lemma~\ref{l:pwprimebound} to bound the prime number sum, and applying Lemmas~\ref{l:epstein} and~\ref{l:eprad} to bound the expectation of the Euler products, we calculate that the expected contribution to the integral from the range $|t_1 - t_2 | > \frac{{(\log \log x)}^5}{\log x}$ is
\begin{multline*}
\ll \frac{1}{{(\log x)}^{1/2}} \iint_{\substack{|t_1|, |t_2| \leq {(\log x)}^2 \\ |t_1 - t_2| > \frac{{(\log \log x)}^{5}}{\log x}}} \frac{1}{(1 + |t_1|)(1 + |t_2|)} \frac{1}{|t_1 - t_2|} \\
\times {\biggl( \frac{\mathbf{1}_{|t_1 - t_2| \leq 1}}{\sqrt{|t_1 - t_2|}} + \min \biggl\{ \sqrt{\log x}, \frac{\mathbf{1}_{|t_1 + t_2| \leq 1}}{\sqrt{|t_1 + t_2|}} \biggr\} + \log (2 + |t_1| + |t_2|) \biggr)} \, dt_1 \, dt_2 ,
\end{multline*}
in both the Steinhaus and the Rademacher cases. Note that we have added in the indicator functions for $|t_1 - t_2| \leq 1$ and $|t_1 + t_2| \leq 1$ since otherwise the last term in the parenthesis dominates. The contribution from the last term in the parenthesis is 
\[
\ll \frac{\log \log x}{{(\log x)}^{1/2}} \iint_{\substack{|t_1|, |t_2| \leq {(\log x)}^2 \\ |t_1 - t_2| > \frac{{(\log \log x)}^{5}}{\log x}}} \frac{1}{1 + |t_1|} \frac{1}{|t_1 - t_2|} \, dt_1 \, dt_2 \ll \frac{{(\log \log x)}^3}{{(\log x)}^{1/2}},
\]
and when $|t_1 - t_2| \leq 1$ or $|t_1 + t_2| \leq 1$, we have $1 + |t_1| \asymp 1 + |t_2|$, and so the contribution from the first two terms in the parenthesis is
\[
\ll \frac{1}{{(\log x)}^{1/2}} \iint_{\substack{|t_1|, |t_2| \leq {(\log x)}^2 \\ |t_1 - t_2| > \frac{{(\log \log x)}^{5}}{\log x}}} \biggl( \frac{\mathbf{1}_{|t_1 - t_2| \leq 1}}{\sqrt{|t_1 - t_2|}} + \min \biggl\{ \sqrt{\log x}, \frac{1}{\sqrt{|t_1 + t_2|}} \biggr\} \biggr) \frac{d t_1 \, d t_2}{{(1 + |t_1|)}^2 |t_1 - t_2|} .
\]
Making the substitution $t_2 = t_1 + u$, this is
\begin{multline*}
\ll \frac{1}{{(\log x)}^{1/2}} \int_{- {(\log x)}^2}^{{(\log x)}^2} \frac{1}{{(1 + |t_1|)}^2} \int_{\frac{{(\log \log x)}^5}{\log x}}^{2 (\log x)^2} \frac{du}{u^{3/2}} \, dt_1 \\
+ \frac{1}{{(\log x)}^{1/2}} \int_{- {(\log x)}^2}^{{(\log x)}^2} \frac{1}{{(1 + |t_1|)}^2} \int_{\frac{{(\log \log x)}^5}{\log x} \leq |u| \leq 2{(\log x)}^2} \min \biggl\{ \sqrt{\log x}, \frac{1}{\sqrt{|2t_1 + u|}} \biggr\} \, \frac{du}{u} \, d t_1 ,
\end{multline*}
The contribution from the first term can be shown to be $\frac{1}{{(\log \log x)}^{5/2}}$ by straightforward calculations, and by swapping the order of integration and applying Cauchy--Schwarz, the second term can be seen to satisfy the same bound. Therefore, with probability $1 - O({(\log \log x)}^{-1})$, our variances $\mathcal{V}_1 (x)$ are 
\begin{equation}\label{equ:var4}
\frac{{(\log \log x)}^{1/2}}{4 \pi^2} \iint_{\substack{|t_1|, |t_2| \leq {(\log x)}^2 \\ |t_1 - t_2| \leq \frac{{(\log \log x)}^{5}}{\log x}}} \frac{F (1/2 + i t_1) \conj{F (1/2 + i t_2)}}{(1/2 + i t_1)(1/2 - i t_2)} x^{i (t_1 - t_2)} \sum_{\sqrt{x} < p \leq x} \frac{1}{p^{1+i(t_1-t_2)}} \, d t_1 \, d t_2 , 
\end{equation}
plus an error of size $O ( {(\log \log x)}^{-1} )$, in both the Rademacher and Steinhaus cases. Finally, we further reduce the ranges of $|t_1|, |t_2|$. Using the expectation bound $\E | F (1/2 + i t_1) \conj{F (1/2 + i t_2)} | \ll \log x$, the expectation of the part of the integral where $\max \{|t_1|, |t_2|\} \geq {(\log \log x)}^{4}$ is, by symmetry,
\begin{multline*}
\ll (\log x) {(\log \log x)}^{1/2} \int_{-{(\log x)}^2}^{{(\log x)}^2} \frac{1}{{(1 + |t_1|)}^2} \\ 
\Biggl( \int_{\substack{\max \{ |t_1|, {(\log \log x)}^4 \} \leq |t_2| \leq {(\log x)}^2 \\ |t_1 - t_2 | \leq \frac{{(\log \log x)}^5}{\log x}}} \min \biggl\{ 1, \frac{1}{|t_1 - t_2| \log x} \biggr\} \, dt_2 \Biggr) \, dt_1 ,
\end{multline*}
where we have used Lemma~\ref{l:pwprimebound} and the fact that on the range of integration $1 + |t_1| \asymp 1 + |t_2|$. Seeing as $t_1$ and $t_2$ are close, this is
\begin{align*}
\ll & \log x {(\log \log x)}^{1/2} \int_{\frac{{(\log \log x)}^4}{2}}^{{(\log x)}^2} \frac{1}{|t_1|^2} \int_{|t_1 - t_2 | \leq \frac{{(\log \log x)}^5}{\log x}} \min \biggl\{ 1, \frac{1}{|t_1 - t_2| \log x} \biggr\} \, dt_1 \, dt_2 \\
\ll & {(\log \log x)}^{1/2} \int_{\frac{{(\log \log x)}^4}{2}}^{{(\log x)}^2} \frac{1}{|t_1|^2} \, dt_1 \\
& + {(\log \log x)}^{1/2} \int_{\frac{{(\log \log x)}^4}{2}}^{{(\log x)}^2} \frac{1}{|t_1|^2} \int_{\frac{1}{\log x} \leq |t_1 - t_2 | \leq \frac{{(\log \log x)}^5}{\log x}} \frac{1}{|t_1 - t_2|} \, dt_1 \, dt_2 \\ 
\ll & \frac{1}{{(\log \log x)}^{5/2}}.
\end{align*}
Therefore, an application of Markov's inequality allows us to say that the contribution to~\eqref{equ:var4} from points where $\max \{|t_1|, |t_2|\} \geq {(\log \log x)}^{4}$ is $\ll {(\log \log x)}^{-1}$, with probability $1 - O ({(\log \log x)}^{-1})$. Putting this all together, we have shown that $\mathcal{V}_1 (x) = \mathcal{V}_2 (x) + O ( {( \log \log x )}^{-1} )$ with probability $1 - O ( {( \log \log x )}^{-1} )$. This completes the proof of Proposition~\ref{p:perron}.
\end{proof}
\begin{proof}[Proof of Proposition~\ref{p:smallprimes}]
Let $y = \lfloor 100 \log \log \log x \rfloor$. To prove Proposition~\ref{p:smallprimes}, we need to show that 
\begin{multline}\label{equ:spl}
\p \Biggl( \biggl| \iint_{\substack{|t_1|, |t_2| \leq {(\log \log x)}^4 \\ |t_1 - t_2| \leq \frac{{(\log \log x)}^5}{\log x}}} \frac{F (1/2 + it_1) I_y (t_2)}{1/2 + i t_1} \Biggl( \frac{\conj{F_{y} (1/2 + it_1)}}{1/2 - i t_1} - \frac{\conj{F_{y} (1/2 + i t_2)}}{1/2 - i t_2} \Biggr) \\
\times \sum_{\sqrt{x} < p \leq x} \frac{x^{i(t_1 - t_2)}}{p^{1 + i (t_1 - t_2)}} \, d t_1 \, d t_2 \biggr| \geq \frac{1}{{(\log \log x)}^{5}} \Biggr) \ll \frac{1}{{(\log \log x)}^{5}},
\end{multline}
We can upper bound the left-hand side using Markov's inequality with first moments. This gives an upper bound of
\begin{multline*} 
{(\log \log x)}^{5} \E \biggl| \iint_{\substack{|t_1|, |t_2| \leq {(\log \log x)}^4 \\ |t_1 - t_2| \leq \frac{{(\log \log x)}^5}{\log x}}} \frac{F (1/2 + it_1) I_y (t_2)}{1/2 + i t_1} \Biggl( \frac{\conj{F_{y} (1/2 + it_1)}}{1/2 - i t_1} - \frac{\conj{F_{y} (1/2 + i t_2)}}{1/2 - i t_2} \Biggr) \\ 
\times \sum_{\sqrt{x} < p \leq x} \frac{x^{i(t_1 - t_2)}}{p^{1 + i (t_1 - t_2)}} \, d t_1 \, d t_2 \biggr|. 
\end{multline*}
By an application of Cauchy--Schwarz, this is smaller than a constant multiplied by
\begin{multline*} 
{(\log \log x)}^{5} \E \Biggl[ {\biggl( \iint_{\substack{|t_1|, |t_2| \leq {(\log \log x)}^4 \\ |t_1 - t_2| \leq \frac{{(\log \log x)}^5}{\log x}}} \biggl| \frac{F (1/2 + it_1)}{1/2 + i t_1} \biggr|^2 \, dt_1 \, dt_2 \Biggr)}^{1/2} \\ 
{\Biggl( \iint_{\substack{|t_1|, |t_2| \leq {(\log \log x)}^4 \\ |t_1 - t_2| \leq \frac{{(\log \log x)}^5}{\log x}}} \biggl| \frac{\conj{F_{y} (1/2 + it_1)}}{1/2 - i t_1} - \frac{\conj{F_{y} (1/2 + i t_2)}}{1/2 - i t_2} \biggr|^2 |I_y (t_2)|^2 \biggr| \, d t_1 \, d t_2 \Biggr)}^{1/2} \Biggr]. 
\end{multline*}
Applying Cauchy--Schwarz to the expectation followed by Lemmas~\ref{l:epstein} and~\ref{l:eprad}, we have the upper bound
\[ 
\ll {(\log \log x)}^{10} {\Biggl( \iint_{\substack{|t_1|, |t_2| \leq {(\log \log x)}^4 \\ |t_1 - t_2| \leq \frac{{(\log \log x)}^5}{\log x}}} \E \biggl| \frac{\conj{F_{y} (1/2 + it_1)}}{1/2 - i t_1} - \frac{\conj{F_{y} (1/2 + i t_2)}}{1/2 - i t_2} \biggr|^2 \E |I_y (t_2)|^2 \, d t_1 \, d t_2 \Biggr)}^{1/2} , 
\]
where we have used the fact that $I_y (t_2)$ is independent of the other Euler product term. It follows from Lemmas~\ref{l:epstein} and~\ref{l:eprad} that $\E |I_y (t_2)|^2 \ll {(\log \log x)}^{100}$. Since we also have $\frac{1}{1/2 - i t_2} = \frac{1}{1/2 - i t_1} + O \bigl( |t_1 - t_2| \bigr)$, the above display is
\begin{multline*} 
\ll {(\log \log x)}^{60} \Biggl( \iint_{\substack{|t_1|, |t_2| \leq {(\log \log x)}^4 \\ |t_1 - t_2| \leq \frac{{(\log \log x)}^5}{\log x}}} \frac{\E \bigl| \conj{F_{y} (1/2 + it_1)} - \conj{F_{y} (1/2 + i t_2)} \bigr|^2}{|1/2 + it_1|^2} \\ 
+ |t_1 - t_2|^2 \E \bigl| F_{y} (1/2 + it_2) \bigr|^2 \, d t_1 \, d t_2 \Biggr)^{1/2} . 
\end{multline*}
Seeing as $\E \bigl| F_{y} (1/2 + it_2) \bigr|^2 \ll \log x/{(\log \log x)}^{100}$, this is
\begin{equation}\label{equ:expofdif}
\ll {(\log \log x)}^{60} {\Biggl( \iint_{\substack{|t_1|, |t_2| \leq {(\log \log x)}^4 \\ |t_1 - t_2| \leq \frac{{(\log \log x)}^5}{\log x}}} \frac{\E \bigl| \conj{F_{y} (1/2 + it_1)} - \conj{F_{y} (1/2 + i t_2)} \bigr|^2}{|1/2 + it_1|^2} \, d t_1 \, d t_2 \Biggr)}^{1/2} + \frac{{(\log \log x)}^{50}}{\log x}.
\end{equation}
Recall that $y = \lfloor 100 \log \log \log x \rfloor$ and define $B(x) \coloneqq \sqrt{x}^{e^{-y}}$. Note that the sum $\sum_{n \colon P(n) \leq B(x)} \frac{1}{\sqrt{n}}$ is convergent, so we can write our Euler product difference as a sum to evaluate its expectation. Letting $M(x) = x^{1/{(\log \log x)}^{50}}$ and taking $\sigma > 0$, we apply Rankin's trick to find that
\begin{align*} 
\E \bigl|& \conj{F_{y} (1/2 + it_1)} - \conj{F_{y} (1/2 + i t_2)} \bigr|^2 \leq \sum_{P(n) \leq B(x)} \frac{|{n^{-it_1} - n^{-it_2}}|^2}{n} \\
&\ll \sum_{n \leq M(x)} \frac{{(\log n)}^2 |t_1 - t_2|^2}{n} + \sum_{\substack{n > M(x) \\ P(n) \leq B(x)}} \frac{1}{n} \\ 
&\ll |t_1 - t_2|^2 {(\log M(x))}^3 + {M(x)}^{-\sigma} \sum_{P(n) \leq B(x)} \frac{1}{n^{1 - \sigma}} \ll \frac{|t_1 - t_2|^2 {(\log x)}^3}{{(\log \log x)}^{150}} + \frac{1}{{(\log \log x)}^{100}},
\end{align*}
where in the last line we have taken $\sigma = \frac{{(\log \log x)}^{51}}{\log x}$. Note that in the first line the inequality is present because, in the Rademacher case, the Dirichlet series corresponding to the partial Euler product is only over square-free integers. From~\eqref{equ:expofdif} we obtain the upper bound for the probability in~\eqref{equ:spl} of
\begin{multline*} 
\ll {(\log x)}^{3/2} {(\log \log x)}^{-15} {\Biggl( \iint_{\substack{|t_1|, |t_2| \leq {(\log \log x)}^4 \\ |t_1 - t_2| \leq \frac{{(\log \log x)}^5}{\log x}}} \frac{|t_1 - t_2|^2}{|1/2 + it_1|^2} \, d t_1 \, d t_2 \Biggr)}^{1/2} \\ 
+ {(\log \log x)}^{10} {\Biggl( \iint_{\substack{|t_1|, |t_2| \leq {(\log \log x)}^4 \\ |t_1 - t_2| \leq \frac{{(\log \log x)}^5}{\log x}}} \frac{1}{|1/2 + it_1|^2} \, d t_1 \, d t_2 \Biggr)}^{1/2} + \frac{{(\log \log x)}^{50}}{\log x}.
\end{multline*}
Straightforward bounds for these integrals yield the upper bound
\[ 
\ll \frac{1}{{(\log \log x)}^{5}} + \frac{{(\log \log x)}^{15}}{{(\log x)}^{1/2}} + \frac{{(\log \log x)}^{50}}{(\log x)} \ll \frac{1}{{(\log \log x)}^{5}}.
\] 
This completes the proof of Proposition~\ref{p:smallprimes}.
\end{proof}
\subsection{Conditional concentration of the variance on small primes}\label{s:condconc}
Recall from Proposition~\ref{p:smallprimes} that $\mathcal{V}_3 (x)$ is equal to
\[
\frac{{(\log \log x)}^{1/2}}{4 \pi^2} \iint_{\substack{|t_1|, |t_2| \leq {(\log \log x)}^4 \\ |t_1 - t_2| \leq \frac{{(\log \log x)}^5}{\log x}}} \frac{|F_{y} (1/2 + i t_1)|^2 I_y (t_1) \conj{I_y (t_2)}}{|1/2 + it_1|^2} \sum_{\sqrt{x} < p \leq x} \frac{x^{i(t_1 - t_2)}}{p^{1 + i (t_1 - t_2)}} \, d t_1 \, d t_2 ,
\]
where $y = \lfloor 100 \log \log \log x \rfloor$, and 
\[ 
I_y (t) \coloneqq \prod_{\sqrt{x}^{e^{-y}} < p \leq \sqrt{x}} {\Bigl( 1 - \frac{f(p)}{p^{1/2 + i t}} \Bigr)}^{-1}, \quad \text{and} \quad I_y (t) \coloneqq \prod_{\sqrt{x}^{e^{-y}} < p \leq \sqrt{x}} {\Bigl( 1 + \frac{f(p)}{p^{1/2 + i t}} \Bigr)} ,
\]
in the Steinhaus and Rademacher cases, respectively. Let $ \mathcal{V}_4 (x)$ equal
\[ 
\frac{{(\log \log x)}^{1/2}}{4 \pi^2} \iint_{\substack{|t_1|, |t_2| \leq {(\log \log x)}^4 \\ |t_1 - t_2| \leq \frac{{(\log \log x)}^5}{\log x}}} \frac{|F_{y} (1/2 + i t_1)|^2 \E \bigl[ I_y (t_1) \conj{I_y (t_2)} \bigr]}{|1/2 + it_1|^2} \sum_{\sqrt{x} < p \leq x} \frac{x^{i(t_1 - t_2)}}{p^{1 + i (t_1 - t_2)}} \, d t_1 \, d t_2 . 
\] 
In this subsection, we show that $\mathcal{V}_4 (x)$ is a good approximation to $\mathcal{V}_3 (x)$. Furthermore, using Lemma~\ref{l:roughintest}, we show that $\mathcal{V}_4 (x)$ can be approximated by 
\[ 
\mathcal{V}_5 (x) \coloneqq \frac{e^{-\gamma} \log 2}{2\pi} \frac{{(\log \log B(x))}^{1/2}}{\log B(x)} \int_{\R} \frac{|F_{y} (1/2 + it)|^2}{|1/2 + it|^2} \, dt ,
\]
where $B(x) = \sqrt{x}^{e^{-y}}$, thus showing that our variances behave like mean squares of Euler products with high probability. Specifically, we will prove the following two propositions:
\begin{proposition}\label{p:smallprimesconc}
We have 
\[ 
| \mathcal{V}_3 (x) - \mathcal{V}_4 (x) | \ll \frac{1}{{(\log \log x)}^{1/5}} , 
\]
with probability $1 - O \bigl({(\log \log x )}^{-1/5}\bigr)$, in both the Steinhaus and Rademacher cases.
\end{proposition}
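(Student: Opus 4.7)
\medskip

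\textbf{Proof proposal for Proposition~\ref{p:smallprimesconc}.} The plan is a conditional second-moment (Chebyshev) argument, using the independence of $F_y$ and $I_y$, Lemma~\ref{l:largeprimeav} to kill the covariances of the Euler-product tails, and the barrier events from Lemma~\ref{l:mcbarrier} to tame the heavy tails of $|F_y|^2$. First write
\[
\mathcal{V}_3(x) - \mathcal{V}_4(x) = \frac{(\log\log x)^{1/2}}{4\pi^2}\iint_{\mathcal{R}} \frac{|F_y(1/2+it_1)|^2}{|1/2+it_1|^2}\,\bigl(I_y(t_1)\conj{I_y(t_2)} - \E[I_y(t_1)\conj{I_y(t_2)}]\bigr) K(t_1-t_2)\, dt_1\,dt_2,
\]
where $\mathcal{R}$ is the region of integration in the definitions of $\mathcal{V}_3, \mathcal{V}_4$ and $K(u) = \sum_{\sqrt{x}<p\leq x} x^{iu}/p^{1+iu}$ (so $|K|\ll 1$ by Mertens). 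Since $F_y$ depends only on $(f(p))_{p\leq B(x)}$ while $I_y$ depends only on $(f(p))_{B(x)<p\leq \sqrt{x}}$, these two Euler products are independent, so letting $\tilde\E[\,\cdot\,]$ denote expectation over the latter factor we have $\tilde\E[\mathcal{V}_3]=\mathcal{V}_4$ pointwise.

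Next, restrict to the barrier event $\mathcal{D}=\bigcap_{|t|\leq (\log\log x)^4}\mathcal{D}(t)$ from Lemma~\ref{l:mcbarrier}, which holds with probability $1-O((\log\log x)^{-4})$ and pointwise constrains the sizes of the $F_j(1/2+it(j))$ for $j\geq y$. On $\mathcal{D}^c$ there is nothing to prove, so by Chebyshev it suffices to show
\[
\E\bigl[\mathbf{1}_{\mathcal{D}}\,|\mathcal{V}_3(x)-\mathcal{V}_4(x)|^2\bigr] \ll (\log\log x)^{-3/5}.
\]
Squaring out $\mathcal{V}_3-\mathcal{V}_4$ and taking $\tilde\E$ before the outer expectation (so the centred four-point correlator of $I_y$ appears independently of $F_y$) gives
\[
\E\bigl[\mathbf{1}_{\mathcal{D}}|\mathcal{V}_3-\mathcal{V}_4|^2\bigr] = \frac{\log\log x}{16\pi^4}\iiiint_{\mathcal{R}^2} \E\!\left[\mathbf{1}_{\mathcal{D}}\frac{|F_y(1/2+it_1)|^2|F_y(1/2+it_3)|^2}{|1/2+it_1|^2|1/2+it_3|^2}\right]\Psi(t_1,t_2,t_3,t_4)\,K(t_1-t_2)\conj{K(t_3-t_4)}\,d\mathbf{t},
\]
where $\Psi = \E[I_y(t_1)\conj{I_y(t_2)}\conj{I_y(t_3)}I_y(t_4)] - \E[I_y(t_1)\conj{I_y(t_2)}]\E[\conj{I_y(t_3)}I_y(t_4)]$.

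The covariance $\Psi$ is exactly what Lemma~\ref{l:largeprimeav} estimates (with its $x$ taken to be $B(x)$ and $y$ taken to be $\sqrt{x}$). Combined with Lemma~\ref{l:pwprimebound}, this yields, in both the Steinhaus and Rademacher cases, a bound of shape
\[
|\Psi(t_1,t_2,t_3,t_4)| \ll \Bigl(\tfrac{\log\sqrt{x}}{\log B(x)}\Bigr)^{\!2}\cdot \min\!\Bigl\{1,\ \tfrac{1}{\log B(x)}\bigl(\tfrac{1}{|t_1-t_3|}+\tfrac{1}{|t_2-t_4|}+\cdots\bigr)\Bigr\},
\]
the ``$\cdots$'' being the extra $|t_1+t_4|^{-1},|t_2+t_3|^{-1}$ terms needed in the Rademacher case. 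The prefactor is $e^{2y}=(\log\log x)^{O(1)}$, and the ``$\min$'' factor furnishes genuine decay as $|t_1-t_3|$ grows past $1/\log B(x)$. For the $F_y$ factors I estimate the outer expectation using the barrier bound of Lemma~\ref{l:mcbarrier}: on $\mathcal{D}$ one can compare $F_y(1/2+it)$ to $F_y(1/2+it(j))$ at the nearest net point and deduce $\E[\mathbf{1}_{\mathcal{D}}|F_y(1/2+it_1)|^2|F_y(1/2+it_3)|^2]\ll (\log B(x))^2$ when $|t_1-t_3|\ll (\log B(x))^{-1}$ and a matching Lemma~\ref{l:epstein}/\ref{l:eprad} bound otherwise, without the usual blow-up.

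Finally, I carry out the integrations in the order $(t_2,t_4)$ first (over ranges of length $\ll (\log\log x)^5/\log x$, producing a factor of $\log\log x/\log x$ from the $|t_2-t_4|^{-1}$ term after integration), and then $(t_1,t_3)$, using the $(1+|t_1|)^{-2}(1+|t_3|)^{-2}$ decay together with the barrier-tamed moment bound. The various $(\log\log x)^{O(1)}$ factors are absorbed by the $1/\log B(x)$ smallness and the $1/(\log x)^2$ from the inner range, leaving a bound $O((\log\log x)^{-3/5})$ with room to spare. The whole proposition then follows from Chebyshev.

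The main obstacle is step five: the heavy tails of $|F_y(1/2+it_1)|^2 |F_y(1/2+it_3)|^2$ would, without modification, make $\E|\mathcal{V}_3-\mathcal{V}_4|^2$ diverge. The barrier event $\mathcal{D}$ (and the auxiliary event $\mathcal{A}$ from Lemma~\ref{l:mcbarrier} if needed to handle the net-point discretization) is precisely what keeps the joint second moment under control; a secondary obstacle is that the $(\log\sqrt{x}/\log B(x))^2=e^{2y}$ prefactor in Lemma~\ref{l:largeprimeav} is polynomially large in $\log\log x$, so the cancellation between the various powers of $\log\log x$ must be monitored carefully, which is the reason for the conservative exponent $1/5$.
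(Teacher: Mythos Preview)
Your overall architecture is right: condition on the small primes, use Chebyshev on $\mathcal{V}_3-\mathcal{V}_4$, exploit the independence of $F_y$ and $I_y$ so that the centred four-point correlator $\Psi$ of the tails factors out, and kill $\Psi$ with Lemma~\ref{l:largeprimeav}. That is exactly how the paper proceeds. The gap is in your treatment of the $F_y$ factor on the near-diagonal.

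The claimed bound $\E[\mathbf{1}_{\mathcal{D}}|F_y(1/2+it_1)|^2|F_y(1/2+it_3)|^2]\ll (\log B(x))^2$ is not what $\mathcal{D}$ delivers. The event $\mathcal{D}(t)$ only gives the pointwise bound $|F_y(1/2+it(y))|\leq (\log B(x))(\log\log x)^5$, so at best $(\log B(x))^4(\log\log x)^{20}$ for the product, or $(\log B(x))^3(\log\log x)^{O(1)}$ if you spend the pointwise bound on one factor and take $\E|F_y|^2\ll\log B(x)$ on the other. Either way the close-region contribution to the fourfold integral still carries a positive power of $\log x$ and diverges; the same happens if you split close/far at $|t_1-t_3|\sim 1/\log B(x)$, because on the far side the combined singularity $\E|F_y(t_1)|^2|F_y(t_3)|^2\cdot|\Psi|\asymp |t_1-t_3|^{-3}$ near that threshold again produces a factor of $(\log B(x))^2$ that is not absorbed.

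What the paper actually does on the close region (taken to be $|t_1-t_3|\leq (\log x)^{-1/4}$, so the far region can be handled with \emph{no} barrier at all) is a scale-dependent splitting that your sketch does not contain. One first inserts the stronger event $\mathcal{A}(t_1)$ from Lemma~\ref{l:mcbarrier} --- its role is not ``net-point discretisation'' but to provide the much sharper pointwise bound $|F_j(1/2+it_1)|\leq \frac{\log x}{e^j(\log\log x)^{1000}}$ for every $j\geq y$. One then writes $F_y=F_J\cdot(F_y/F_J)$ with $J=J(t_1-t_3)=\lfloor\max\{y,\log(|t_1-t_3|\log x)\}\rfloor$, uses $\mathcal{A}(t_1)$ to bound $|F_J(1/2+it_1)|^2$ pointwise by $(\log\log x)^{-2000}\min\{|t_1-t_3|^{-1},\log x\}^2$, and takes an ordinary joint expectation on the decorrelated tail $|F_y/F_J(t_1)|^2|F_y/F_J(t_3)|^2$ via Lemma~\ref{l:epstein}/\ref{l:eprad} (which is now harmless because the product is over primes $>e^{1/|t_1-t_3|}$). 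This is the step that converts the lethal $(\log B(x))^4$ into a quantity that integrates; without it the second-moment bound cannot close.
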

\begin{proposition}\label{p:roughestapplication}
We have 
\[ 
| \mathcal{V}_4 (x) - \mathcal{V}_5 (x) | \ll \frac{1}{{(\log \log x)}^{1/5}} , 
\]
with probability $1 - O \bigl({(\log \log x )}^{-1/4}\bigr)$, in both the Steinhaus and Rademacher cases.
\end{proposition}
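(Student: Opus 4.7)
The strategy is to evaluate the inner $t_2$-integral inside $\mathcal{V}_4(x)$ explicitly using Lemma~\ref{l:roughintest}. Once this is done, the result is a deterministic factor independent of $t_1$, and $\mathcal{V}_4(x)$ becomes a constant multiple of the weighted Euler product integral defining $\mathcal{V}_5(x)$, up to a handful of error terms that we control using Lemma~\ref{l:mcexpectation}.

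First I would compute the expectation $\E\bigl[I_y(t_1)\conj{I_y(t_2)}\bigr]$ using the same Euler product manipulations carried out in the proof of Lemma~\ref{l:largeprimeav}. In the Steinhaus case this equals $\prod_{B(x) < p \leq \sqrt{x}} \bigl(1 - p^{-1 - i(t_1 - t_2)}\bigr)^{-1}$ exactly, and in the Rademacher case it differs from this product by a factor $1 + O(B(x)^{-1/2})$. For $|t_1| \leq (\log\log x)^4 - (\log\log x)^5/\log x$ the constraint $|t_2| \leq (\log\log x)^4$ is automatically implied by $|t_1 - t_2| \leq (\log\log x)^5/\log x$, so the substitution $u = t_1 - t_2$ puts the inner integral exactly in the form covered by Lemma~\ref{l:roughintest} with $W(x) = (\log\log x)^5$. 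The lemma then evaluates it as $\frac{2\pi e^{-\gamma} \log 2}{\log B(x)} \bigl(1 + O(1/\log\log x)\bigr)$, a deterministic quantity independent of $t_1$.

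Substituting back into $\mathcal{V}_4(x)$ produces $\mathcal{V}_5(x)$ up to four error terms: (i) the $O(1/\log\log x)$ relative error from Lemma~\ref{l:roughintest}; (ii) the tail contribution to $\mathcal{V}_5(x)$ from $|t_1| > (\log\log x)^4$; (iii) the mismatch between $(\log\log x)^{1/2}$ and $(\log\log B(x))^{1/2}$ in the prefactor, which is a relative error of order $\log\log\log x / \log\log x$ since $\log\log B(x) = \log\log x - y - \log 2$ with $y \asymp \log\log\log x$; and (iv) the boundary contribution from $|t_1|$ within $(\log\log x)^5/\log x$ of $(\log\log x)^4$. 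Using $\E|F_y(1/2 + it)|^2 \ll \log B(x)$, errors (ii) and (iv) have expectations $\ll (\log\log x)^{-7/2}$ and $\ll \bigl((\log\log x)^{5/2} \log x\bigr)^{-1}$ respectively, so first-moment Markov shows each is $\ll (\log\log x)^{-1/5}$ with ample probability.

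The remaining errors (i) and (iii) are relative errors multiplying a quantity essentially equal to $\mathcal{V}_5(x)$. To convert them to absolute errors of size $(\log\log x)^{-1/5}$, I would apply Markov's inequality with $q = 1/2$ to $\mathcal{V}_5(x)$; Lemma~\ref{l:mcexpectation} (applied to the shorter product $F_{(B(x))} = F_y$ in place of $F_{(x)}$) gives $\E\mathcal{V}_5(x)^{1/2} \ll 1$, so that $\p(\mathcal{V}_5(x) > T) \ll T^{-1/2}$, and choosing $T = (\log\log x)^{4/5}$ yields $T^{-1/2} \ll (\log\log x)^{-1/4}$. On the event $\{\mathcal{V}_5(x) \leq T\}$, errors (i) and (iii) are each $\ll \log\log\log x \cdot (\log\log x)^{-1/5}$, which is of the required size once $x$ is large. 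A union bound over the four error sources then delivers the proposition. The main obstacle is precisely this final step: the fractional moment bounds from Lemma~\ref{l:mcexpectation} provide only $q < 1$, forcing us to calibrate the threshold $T$ carefully, and the stated exponents $-1/5$ for the error and $-1/4$ for the probability are arranged so that a Markov argument with $q = 1/2$ comfortably suffices. A secondary technical point is verifying that $\E[I_y(t_1)\conj{I_y(t_2)}]$, in both the Steinhaus and Rademacher cases, is close enough to one of the two integrand forms appearing in Lemma~\ref{l:roughintest} for that lemma to apply.
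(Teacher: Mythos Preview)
Your approach is essentially identical to the paper's: drop or isolate the boundary constraint on $t_2$, substitute $u=t_1-t_2$, feed the inner integral into Lemma~\ref{l:roughintest}, convert $(\log\log x)^{1/2}$ to $(\log\log B(x))^{1/2}$, and then use a fractional-moment Markov inequality via Lemma~\ref{l:mcexpectation} to turn the relative errors into absolute ones. The paper disposes of your error (iv) by dropping the condition $|t_2|\le(\log\log x)^4$ at the start rather than isolating a boundary strip, but that is cosmetic.

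One small slip: with your choice $q=1/2$, $T=(\log\log x)^{4/5}$, error (iii) is $\log\log\log x\cdot(\log\log x)^{-1/5}$, which is \emph{not} $\ll(\log\log x)^{-1/5}$ as the proposition demands. This is trivially fixed by shrinking $T$ slightly (e.g.\ $T=(\log\log x)^{4/5}/\log\log\log x$, still giving $T^{-1/2}\ll(\log\log x)^{-1/4}$) or by using a larger $q$. The paper in effect takes $q$ close to $1$, obtaining $\int_{\R}|F_y(1/2+it)/(1/2+it)|^2\,dt\le\log B(x)/(\log\log x)^{9/40}$ on the good event, after which errors (i) and (iii) land well below $(\log\log x)^{-1/5}$.
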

\begin{remark}
One could obtain a similar result where the range of integration in $\mathcal{V}_5 (x)$ is replaced by $[-W(x), W(x)]$, where $W(x) \rightarrow \infty$.
\end{remark}
\begin{proof}[Proof of Proposition~\ref{p:smallprimesconc}]
This proof is quite involved and constitutes the main innovation of the paper. Beginning with $\mathcal{V}_3 (x)$, which equals
\[
\frac{{(\log \log x)}^{1/2}}{4 \pi^2} \iint_{\substack{|t_1|, |t_2| \leq {(\log \log x)}^4 \\ |t_1 - t_2| \leq \frac{{(\log \log x)}^5}{\log x}}} \frac{|F_{y} (1/2 + i t_1)|^2 I_y (t_1) \conj{I_y (t_2)}}{|(1/2 + it_1)|^2} \sum_{\sqrt{x} < p \leq x} \frac{x^{i(t_1 - t_2)}}{p^{1 + i (t_1 - t_2)}} \, d t_1 \, d t_2 ,
\]
we first remove the piece of the integral where $|t_1| \leq \frac{1}{{(\log \log x)}^{50}}$, allowing us to insert the barrier that appears in Lemma~\ref{l:mcbarrier}. This initial part of the argument is similar to~\cite[Section~3.3]{HarperLargeFluct}. The expected size of the part of the integral where $|t_1| \leq \frac{1}{{(\log \log x)}^{50}}$ is $\ll \frac{1}{{(\log \log x)}^{40}}$. Therefore, by Markov's inequality, $\mathcal{V}_3 (x)$ is equal to
\[
\frac{{(\log \log x)}^{1/2}}{4 \pi^2} \iint_{\substack{|t_1|, |t_2| \leq {(\log \log x)}^4 \\ |t_1 - t_2| \leq \frac{{(\log \log x)}^5}{\log x} \\ |t_1| \geq {(\log \log x)}^{-50}}} \frac{|F_{y} (1/2 + i t_1)|^2 I_y (t_1) \conj{I_y (t_2)}}{|1/2 + it_1|^2} \sum_{\sqrt{x} < p \leq x} \frac{x^{i(t_1 - t_2)}}{p^{1 + i (t_1 - t_2)}} \, d t_1 \, d t_2 ,
\]
up to an additive error of size $\ll \frac{1}{{(\log \log x)}^{10}}$, with probability $1- O({(\log \log x)}^{-10})$.
From Lemma~\ref{l:mcbarrier}, we know that the event $\mathcal{D}(t)$ occurs for all $|t| \leq {(\log \log x)}^{4}$ with probability $1 - O({(\log \log x)}^{-4})$. Therefore, $\mathcal{V}_3 (x)$ equals
\[
\frac{{(\log \log x)}^{1/2}}{4 \pi^2} \iint_{\substack{|t_1|, |t_2| \leq {(\log \log x)}^4 \\ |t_1 - t_2| \leq \frac{{(\log \log x)}^5}{\log x} \\ |t_1| \geq {(\log \log x)}^{-50}}} \frac{\mathbf{1}_{\mathcal{D}(t_1)} |F_{y} (1/2 + i t_1)|^2 I_y (t_1) \conj{I_y (t_2)}}{|1/2 + it_1|^2} \sum_{\sqrt{x} < p \leq x} \frac{x^{i(t_1 - t_2)}}{p^{1 + i (t_1 - t_2)}} \, d t_1 \, d t_2 ,
\]
up to an additive error of size $\ll \frac{1}{{(\log \log x)}^{10}}$, with probability $1- O({(\log \log x)}^{-4})$. Inserting the barrier $\mathcal{A}(t_1)$ from Lemma~\ref{l:mcbarrier}, this is equal to
\begin{multline*}
\frac{{(\log \log x)}^{1/2}}{4 \pi^2} \iint_{\substack{|t_1|, |t_2| \leq {(\log \log x)}^4 \\ |t_1 - t_2| \leq \frac{{(\log \log x)}^5}{\log x} \\ |t_1| \geq {(\log \log x)}^{-50}}} \Biggl( \frac{\mathbf{1}_{\mathcal{D}(t_1)} \mathbf{1}_{\mathcal{A}(t_1)} |F_{y} (1/2 + i t_1)|^2 I_y (t_1) \conj{I_y (t_2)}}{|1/2 + it_1|^2} \\
+ \frac{\mathbf{1}_{\mathcal{D}(t_1)} \mathbf{1}_{\mathcal{A}(t_1) \text{ fails}} |F_{y} (1/2 + i t_1)|^2 I_y (t_1) \conj{I_y (t_2)}}{|1/2 + it_1|^2} \Biggr) \sum_{\sqrt{x} < p \leq x} \frac{x^{i(t_1 - t_2)}}{p^{1 + i (t_1 - t_2)}} \, d t_1 \, d t_2 .
\end{multline*}
By Lemma~\ref{l:mcbarrier} and Lemma~\ref{l:pwprimebound}, and using the fact that $\E |I_y (t_1) \conj{I_y (t_2)}| \ll {(\log \log x)}^{100}$ (which follows from Lemmas~\ref{l:epstein} and~\ref{l:eprad}), the expected size of the contribution from the second term in the parenthesis is
\begin{align*}
\ll & \frac{(\log x) {(\log \log \log x)}^7}{{(\log \log x)}^{1/2}} \iint_{\substack{|t_1| \leq {(\log \log x)}^4 \\ |t_1 - t_2| \leq \frac{{(\log \log x)}^5}{\log x}}} \frac{1}{{(1 + |t_1|)}^2} \min \biggl\{ 1, \frac{1}{|t_1 - t_2| \log x} \biggr\} \, dt_1 \, dt_2 \\
\ll & \frac{{(\log \log \log x)}^7}{{(\log \log x)}^{1/2}} + \frac{{(\log \log \log x)}^7}{{(\log \log x)}^{1/2}} \int_{-{(\log \log x)}^4}^{{(\log \log x)}^4} \frac{d t_1}{{(1+|t_1|)}^2} \int_{\frac{1}{\log x}}^{\frac{{(\log \log x)}^5}{\log x}} \frac{du}{u} \ll \frac{{(\log \log \log x)}^8}{{(\log \log x)}^{1/2}}.
\end{align*}
Hence we find that
\begin{multline*}
\mathcal{V}_3 (x) = \frac{{(\log \log x)}^{1/2}}{4 \pi^2} \iint_{\substack{|t_1|, |t_2| \leq {(\log \log x)}^4 \\ |t_1 - t_2| \leq \frac{{(\log \log x)}^5}{\log x} \\ |t_1| \geq {(\log \log x)}^{-50}}} \frac{\mathbf{1}_{\mathcal{D}(t_1)} \mathbf{1}_{\mathcal{A}(t_1)} |F_{y} (1/2 + i t_1)|^2}{|1/2 + it_1|^2} \\ 
\biggl( I_y (t_1) \conj{I_y (t_2)} \sum_{\sqrt{x} < p \leq x} \frac{x^{i(t_1 - t_2)}}{p^{1 + i (t_1 - t_2)}} \biggr) \, d t_1 \, d t_2 + O \biggl( \frac{1}{{(\log \log x)}^{1/5}} \biggr),
\end{multline*}
with probability $1- O({(\log \log x)}^{-1/5})$. We now consider the quantity
\begin{multline}\label{equ:chebyshev}
\frac{\log \log x}{16 \pi ^4} \E \biggl| \iint_{\substack{|t_1|, |t_2| \leq {(\log \log x)}^4 \\ |t_1 - t_2| \leq \frac{{(\log \log x)}^5}{\log x} \\ |t_1| \geq {(\log \log x)}^{-50}}} \frac{\mathbf{1}_{\mathcal{D}(t_1)} \mathbf{1}_{\mathcal{A}(t_1)} |F_{y} (1/2 + i t_1)|^2}{|1/2 + it_1|^2} \\
\times \Bigr( I_y (t_1) \conj{I_y (t_2)} - \E \Bigl[ I_y (t_1) \conj{I_y (t_2)} \Bigr] \Bigr) \sum_{\sqrt{x} < p \leq x} \frac{x^{i(t_1 - t_2)}}{p^{1 + i (t_1 - t_2)}} \, d t_1 \, d t_2 \biggr|^2 .
\end{multline}
To complete the proof of Proposition~\ref{p:smallprimesconc}, by Chebyshev's inequality, it suffices to show that the above display is $\ll \frac{1}{{(\log \log x)}^{1000}}$, since if this were the case, it would follow that
\begin{multline*}
\mathcal{V}_3 (x) = \frac{{(\log \log x)}^{1/2}}{4 \pi^2} \iint_{\substack{|t_1|, |t_2| \leq {(\log \log x)}^4 \\ |t_1 - t_2| \leq \frac{{(\log \log x)}^5}{\log x} \\ |t_1| \geq {(\log \log x)}^{-50}}} \frac{\mathbf{1}_{\mathcal{D}(t_1)} \mathbf{1}_{\mathcal{A}(t_1)} |F_{y} (1/2 + i t_1)|^2}{|1/2 + it_1|^2} \\ 
\biggl( \E \Bigl[ I_y (t_1) \conj{I_y (t_2)} \Bigr] \sum_{\sqrt{x} < p \leq x} \frac{x^{i(t_1 - t_2)}}{p^{1 + i (t_1 - t_2)}} \biggr) \, d t_1 \, d t_2 + O \biggl( \frac{1}{{(\log \log x)}^{1/5}} \biggr),
\end{multline*}
with probability $1- O({(\log \log x)}^{-1/5})$. The barriers can then be removed by reversing the argument used to introduce them, and this delivers $\mathcal{V}_4 (x)$. \\
Expanding out the square, we find that equation~\eqref{equ:chebyshev} is
\begin{multline}\label{equ:varnotsplitup}
\ll \log \log x \iiiint_{\substack{|t_1|, |t_2|, |t_3|, |t_4| \leq {(\log \log x)}^4 \\ |t_1|, |t_3| \geq {(\log \log x)}^{-50} \\ |t_1 - t_2|, |t_3 - t_4| \leq \frac{{(\log \log x)}^5}{\log x} }} \E \mathbf{1}_{\mathcal{A}(t_1)} | F_{y} (1/2 + it_1) |^2 \mathbf{1}_{\mathcal{A}(t_3)} | F_{y} (1/2 + it_3) |^2 \\ 
\times \bigl| \E I_y (t_1) \conj{I_y (t_2)} \conj{I_y (t_3)} I_y (t_4) - \E I_y(t_1) \conj{I_y (t_2)} \E \conj{I_y (t_3)} I_y (t_4) \bigr| \, dt_1 \, dt_2 \, dt_3 \, dt_4.
\end{multline}
When $t_1 \approx t_3$ (and additionally, when $t_1 \approx - t_3$ in the Rademacher case), the Euler products can reinforce one another and give a large contribution. It is the barriers $\mathcal{A}(t_1)$ and $\mathcal{A}(t_3)$ that will allow us to deal with the contribution from such points. On the remaining points, we will be able to apply Lemma~\ref{l:largeprimeav} to give a saving. \\ 
With this in mind, we first consider the portions of the integral in~\eqref{equ:varnotsplitup} where $|t_1 - t_3| \leq {(\log x)}^{-1/4}$ or $ | t_1 + t_3 | \leq {(\log x)}^{-1/4}$. On these points, we apply the trivial upper bound
\[
\bigl| \E I_y (t_1) \conj{I_y (t_2)} \conj{I_y (t_3)} I_y (t_4) - \E I_y (t_1) \conj{I_y (t_2)} \E \conj{I_y (t_3)} I_y (t_4) \bigr| \ll {(\log \log x)}^{400} ,
\]
for any $t_1, t_2, t_3, t_4 \in \R$, which follows from applying the triangle inequality and Cauchy--Schwarz. Then integrating over $t_2$ and $t_4$, we obtain the bound
\begin{multline*} 
\ll \frac{{(\log \log x)}^{411}}{{(\log x)}^2} \Biggl[ \iint_{\substack{|t_1|, |t_3| \leq {(\log \log x)}^4 \\ |t_1|, |t_3| \geq {(\log \log x)}^{-50} \\ |t_1 - t_3| \leq {(\log x)}^{-1/4}}} \E \mathbf{1}_{\mathcal{A}(t_1)} | F_{y} (1/2 + it_1) |^2 \mathbf{1}_{\mathcal{A}(t_3)} | F_{y} (1/2 + it_3) |^2 \, dt_1 \, dt_3 \\
+ \iint_{\substack{|t_1|, |t_3| \leq {(\log \log x)}^4 \\ |t_1|, |t_3| \geq {(\log \log x)}^{-50} \\ |t_1 + t_3| \leq {(\log x)}^{-1/4}}} \E \mathbf{1}_{\mathcal{A}(t_1)} | F_{y} (1/2 + it_1) |^2 \mathbf{1}_{\mathcal{A}(t_3)} | F_{y} (1/2 + it_3) |^2 \, dt_1 \, dt_3 \Biggr] .
\end{multline*}
Both of these parts will be handled identically, so we restrict ourselves to the first term. Define 
\[ 
J(t) \coloneqq \lfloor \max \{ {100 \log \log \log x}, \log ( |t| \log x ) \} \rfloor .
\]
In the integral
\[ 
\frac{{(\log \log x)}^{411}}{{(\log x)}^2} \iint_{\substack{|t_1|, |t_3| \leq {(\log \log x)}^4 \\ |t_1|, |t_3| \geq {(\log \log x)}^{-50} \\ |t_1 - t_3| \leq {(\log x)}^{-1/4}}} \E \mathbf{1}_{\mathcal{A}(t_1)} | F_{y} (1/2 + it_1) |^2 \mathbf{1}_{\mathcal{A}(t_3)} | F_{y} (1/2 + it_3) |^2 \, d t_1 \, d t_3 ,
\]
the Euler products reinforce one another on primes $p$ where $p \leq e^{1/|t_1 - t_3|}$. Therefore, we decompose one of the Euler products, writing the above as
\begin{multline*}
\frac{{(\log \log x)}^{411}}{{(\log x)}^2} \iint_{\substack{|t_1|, |t_3| \leq {(\log \log x)}^4 \\ |t_1|, |t_3| \geq {(\log \log x)}^{-50} \\ |t_1 - t_3| \leq {(\log x)}^{-1/4}}} \E \biggl[ \mathbf{1}_{\mathcal{A}(t_1)} |F_{J(t_1 - t_3)} (1/2 + i t_1)|^2 \\
\times \biggl| \frac{F_y (1/2 + it_1)}{F_{J(t_1 - t_3)} (1/2 + it_1)} \biggr|^2 \mathbf{1}_{\mathcal{A}(t_3)} | F_{y} (1/2 + it_3) |^2 \biggr] \, d t_1 \, d t_3 ,
\end{multline*}
From the definition of $\mathcal{A}(t_1)$, we have 
\[ 
\mathbf{1}_{\mathcal{A}(t_1)} |F_{J(t_1 - t_3)} (1/2 + i t_1)|^2 \ll \frac{1}{{(\log \log x)}^{2000}}{\biggl( \min \biggl\{ \frac{1}{|t_1 - t_3|}, \frac{\log x}{{(\log \log x)}^{100}} \biggr\} \biggr)}^2 .
\] 
Applying this bound and removing the indicator function $\mathbf{1}_{\mathcal{A}(t_3)}$ gives the bound
\begin{multline}\label{equ:varconstructive}
\ll \frac{1}{{(\log x)}^2 {(\log \log x)}^{1500}} \iint_{\substack{|t_1|, |t_3| \leq {(\log \log x)}^4 \\ |t_1|, |t_3| \geq {(\log \log x)}^{-50} \\ |t_1 - t_3| \leq {(\log x)}^{-1/4}}} {\biggl( \min \biggl\{ \frac{1}{|t_1 - t_3|}, \frac{\log x}{{(\log \log x)}^{100}} \biggr\} \biggr)}^2 \\ \times \E | F_{J(t_1 - t_3)} (1/2 + it_3) |^2 \E \biggl| \frac{F_y (1/2 + it_1)}{F_{J(t_1 - t_3)} (1/2 + it_1)} \biggr|^2 \biggl| \frac{F_y (1/2 + it_3)}{F_{J(t_1 - t_3)} (1/2 + it_3)} \biggr|^2 \, d t_1 \, d t_3 .
\end{multline}
Note that the products $|F_{J(t_1 - t_3)} (1/2 + i t)|$ and $\bigl| \frac{F_y (1/2 + it)}{F_{J(t_1 - t_3)} (1/2 + it)} \bigr|$ don't share any primes, so we have used independence to factor the expectation. By Lemmas~\ref{l:epstein} and~\ref{l:eprad}, we have $\E | F_{J(t_1 - t_3)} (1/2 + it_3) |^2 \ll \min \bigl\{ \frac{1}{|t_1 - t_3|}, \frac{\log x}{{(\log \log x)}^{100}} \bigr\}$ in both the Steinhaus and Rademacher cases. Furthermore, by the second parts of Lemmas~\ref{l:epstein} and~\ref{l:eprad}, we have
\begin{multline*}
\E \biggl| \frac{F_y (1/2 + it_1)}{F_{J(t_1 - t_3)} (1/2 + it_1)} \biggr|^2 \biggl| \frac{F_y (1/2 + it_3)}{F_{J(t_1 - t_3)} (1/2 + it_3)} \biggr|^2 \ll {\biggl( \frac{e^{J(t_1 - t_3)}}{{(\log \log x)}^{100}} \biggr)}^2 \\ 
\times {\biggl( 1 + \min \biggl\{ \frac{e^{J(t_1 - t_3)}}{{(\log \log x)}^{100}}, \frac{e^{J(t_1 - t_3)}}{|t_1 + t_3| \log x} \biggr\} \biggr)}^2 {\biggl( 1 + \min \biggl\{ \frac{e^{J(t_1 - t_3)}}{{(\log \log x)}^{100}}, \frac{e^{J(t_1 - t_3)}}{|t_1 - t_3|\log x} \biggr\} \biggr)}^2 ,
\end{multline*}
on our range of integration, seeing as the first terms in these results are always bounded on this range. Note that when $|t_1 - t_3| \leq \frac{{(\log \log x)}^{100}}{\log x}$, the expectation above is $\ll 1$, seeing as $J(t_1 - t_3) = y$ in this case. The contribution to~\eqref{equ:varconstructive} from this case is bounded above by
\[ 
\ll \frac{1}{{(\log x)}^2 {(\log \log x)}^{1500}} \iint_{\substack{|t_1|, |t_3| \leq {(\log \log x)}^4 \\ |t_1 - t_3| \leq \frac{{(\log \log x)}^{100}}{\log x}}} \frac{{(\log x)}^{3}}{{(\log \log x)}^{300}} \, dt_1 \, dt_3 \ll \frac{1}{{(\log \log x)}^{1000}},
\]
say. Furthermore, when $\frac{{(\log \log x)}^{100}}{\log x} < |t_1 - t_3| \leq \frac{1}{{(\log x)}^{1/4}}$, we have the bound
\begin{multline*}
\E \biggl| \frac{F_y (1/2 + it_1)}{F_{J(t_1 - t_3)} (1/2 + it_1)} \biggr|^2 \biggl| \frac{F_y (1/2 + it_3)}{F_{J(t_1 - t_3)} (1/2 + it_3)} \biggr|^2 \\ 
\ll {\biggl( \frac{|t_1 - t_3| \log x}{{(\log \log x)}^{100}} \biggr)}^2 {\biggl( 1 + \min \biggl\{ \frac{|t_1 - t_3| \log x}{{(\log \log x)}^{100}}, \frac{|t_1 - t_3|}{|t_1 + t_3|} \biggr\} \biggr)}^2 .
\end{multline*}
Hence the contribution to~\eqref{equ:varconstructive} from the part of the integral where $|t_1 - t_3| > \frac{{(\log \log x)}^{100}}{\log x}$ is
\[
\ll \frac{1}{{(\log \log x)}^{1700}} \iint_{\substack{|t_1|, |t_3| \leq {(\log \log x)}^4 \\ |t_1|, |t_3| \geq {(\log \log x)}^{-50} \\ |t_1 - t_3| \leq {(\log x)}^{-1/4} \\|t_1 - t_3| > \frac{{(\log \log x)}^{100}}{\log x}}} \frac{1}{|t_1 - t_3|} {\biggl( 1 + \min \biggl\{ \frac{|t_1 - t_3| \log x}{{(\log \log x)}^{100}}, \frac{|t_1 - t_3|}{|t_1 + t_3|} \biggr\} \biggr)}^2 \, dt_1 \, dt_3 .
\]
By the reverse triangle inequality, one can deduce that $\frac{1}{{(\log \log x)}^{50}} \leq |t_1 + t_3| $ for any $(t_1, t_3)$ in the domain of integration, when $x$ is large. Therefore, the term in the parenthesis is $\ll 1$, and we can crudely bound this by
\[
\ll \frac{1}{{(\log \log x)}^{1700}} \iint_{\substack{|t_1|, |t_3| \leq {(\log \log x)}^4 \\ |t_1|, |t_3| \geq {(\log \log x)}^{-50} \\ |t_1 - t_3| \leq {(\log x)}^{-1/4} \\|t_1 - t_3| > \frac{{(\log \log x)}^{100}}{\log x}}} \frac{1}{|t_1 - t_3|} \, dt_1 \, dt_3 \ll \frac{1}{{(\log \log x)}^{1600}} ,
\]
say. Thus, we have found that the contribution to~\eqref{equ:varnotsplitup} from points that satisfy $|t_1 - t_3| \leq {(\log x)}^{-1/4}$ (and similarly, $|t_1 + t_3| \leq {(\log x)}^{-1/4}$) is $\ll \frac{1}{{(\log \log x)}^{1000}}$, which suffices for our purpose. It remains to show that the contribution to~\eqref{equ:varnotsplitup} from points where $|t_1 - t_3|$, $|t_1 + t_3| \geq {(\log x)}^{-1/4}$ satisfies the same bound. Dropping the barriers $\mathbf{1}_{A(t_1)}$ and $\mathbf{1}_{A(t_3)}$ and the lower bounds for $|t_1|$ and $|t_3|$, we get a contribution of
\begin{multline}\label{equ:nonconstr}
\ll \log \log x \iiiint_{\substack{|t_1|, |t_3| \leq {(\log \log x)}^4 \\ |t_1 - t_2|, |t_3 - t_4| \leq \frac{{(\log \log x)}^5}{\log x} \\ |t_1 - t_3|, |t_1 + t_3| \geq {(\log x)}^{-1/4} }} \E | F_{y} (1/2 + it_1) |^2 | F_{y} (1/2 + it_3) |^2 \\ 
\times \bigl| \E I_y (t_1) \conj{I_y (t_2)} \conj{I_y (t_3)} I_y (t_4) - \E I_y (t_1) \conj{I_y (t_2)} \E \conj{I_y (t_3)} I_y (t_4) \bigr| \, dt_1 \, dt_2 \, dt_3 \, dt_4, 
\end{multline}
By the second parts of Lemmas~\ref{l:epstein} and~\ref{l:eprad}, the expectation $\E | F_{y} (1/2 + it_1) |^2 | F_{y} (1/2 + it_3) |^2$ is 
\begin{align*}
\ll & {(\log \log x)}^{4/25} {\biggl( \frac{\log x}{{(\log \log x)}^{100}} \biggr)}^2 {\biggl( 1 + \min \biggl\{ \frac{\log x}{{(\log \log x)}^{100}} , \frac{1}{|t_1 + t_3|} \biggr\} \biggr)}^2 \\ 
& \times {\biggl( 1 + \min \biggl\{ \frac{\log x}{{(\log \log x)}^{100}} , \frac{1}{|t_1 - t_3|} \biggr\} \biggr)}^2 \\
\ll & \frac{{(\log x)}^2}{{(\log \log x)}^{190}} {\biggl( 1 + \frac{1}{|t_1 + t_3|} \biggr)}^2 {\biggl( 1 + \frac{1}{|t_1 - t_3|}\biggr)}^2 ,
\end{align*}
on the range of integration. Inserting this and applying Lemma~\ref{l:largeprimeav} to handle the latter expectation, we find that~\eqref{equ:nonconstr} is
\begin{multline*}
\ll (\log x){(\log \log x)}^{150} \iiiint_{\substack{|t_1|, |t_3| \leq {(\log \log x)}^4 \\ |t_1 - t_2|, |t_3 - t_4| \leq \frac{{(\log \log x)}^5}{\log x} \\ |t_1 - t_3|, |t_1 + t_3| \geq {(\log x)}^{-1/4} }} {\biggl( 1 + \frac{1}{|t_1 + t_3|} \biggr)}^2 {\biggl( 1 + \frac{1}{|t_1 - t_3|} \biggr)}^2 \\ 
\biggl( \frac{1}{|t_1 - t_3|} + \frac{1}{|t_2 - t_4|} + \frac{1}{|t_1 + t_4|} + \frac{1}{|t_2 + t_3|} \biggr) \, dt_1 \, dt_2 \, dt_3 \, dt_4 ,
\end{multline*}
say. Furthermore, $|t_1 + t_4| \asymp |t_2 + t_3| \asymp |t_1 + t_3|$ and $|t_2 - t_4| \asymp |t_1 - t_3|$, allowing us to completely remove the dependence on $t_2$ and $t_4$. We therefore obtain a bound for the previous display of
\begin{multline}\label{equ:t1t3int}
\ll \frac{{(\log \log x)}^{160}}{\log x} \iint_{\substack{|t_1|, |t_3| \leq {(\log \log x)}^4 \\ |t_1 - t_3|, |t_1 + t_3| \geq {(\log x)}^{-1/4} }} {\biggl( 1 + \frac{1}{|t_1 + t_3|} \biggr)}^2 {\biggl( 1 + \frac{1}{|t_1 - t_3|} \biggr)}^2 \\ 
\biggl( \frac{1}{|t_1 - t_3|} + \frac{1}{|t_1 + t_3|} \biggr) \, dt_1 \, dt_3 ,
\end{multline}
The contribution to~\eqref{equ:t1t3int} from the portion of the integral where $|t_1 + t_3 | \leq 1$ is
\begin{multline*}
\ll \frac{{(\log \log x)}^{160}}{\log x} \iint_{\substack{|t_1|, |t_3| \leq {(\log \log x)}^4 \\ |t_1 - t_3|, |t_1 + t_3| \geq {(\log x)}^{-1/4} \\ |t_1 + t_3| \leq 1}} {\biggl( 1 + \frac{1}{|t_1 - t_3|} \biggr)}^2 \\ 
\times \biggl( \frac{1}{|t_1 - t_3||t_1 + t_3|^2} + \frac{1}{|t_1 + t_3|^3} \biggr) \, dt_1 \, dt_3 .
\end{multline*}
The contribution to this integral from $|t_1 - t_3| > 1$ is
\[ 
\ll \frac{{(\log \log x)}^{160}}{\log x} \iint_{\substack{|t_1|, |t_3| \leq {(\log \log x)}^4 \\ |t_1 + t_3| \geq {(\log x)}^{-1/4}}} \frac{1}{|t_1 + t_3|^3} \, dt_1 \, dt_3 \ll \frac{{(\log \log x)}^{170}}{{(\log x)}^{1/2}} ,
\]
and from $|t_1 - t_3| \leq 1$ the contribution is
\[ 
\ll \frac{{(\log \log x)}^{160}}{\log x} \iint_{\substack{|t_1|, |t_3| \leq {(\log \log x)}^4 \\ {(\log x)}^{-1/4} \leq |t_1 - t_3|, |t_1 + t_3| \leq 1}} \biggl( \frac{1}{|t_1 - t_3|^3 |t_1 + t_3|^2} + \frac{1}{|t_1 - t_3|^2 |t_1 + t_3|^3} \biggr) \, dt_1 \, dt_3 .
\]
By symmetry, it suffices to bound the contribution from only one of the terms in the parenthesis, and it is straightforward to obtain the upper bound
\[
\ll \frac{{(\log \log x)}^{160}}{\log x} \int_{u \geq {(\log x)}^{-1/4}} \frac{du}{u^3} \int_{v \geq {(\log x)}^{-1/4}} \frac{dv}{v^2} \ll \frac{{(\log \log x)}^{160}}{{(\log x)}^{1/4}} .
\]
Finally, the contribution to~\eqref{equ:t1t3int} from the portion of the integral where $|t_1 + t_3 | > 1$ is
\[ 
\ll \frac{{(\log \log x)}^{160}}{\log x} \iint_{\substack{|t_1|, |t_3| \leq {(\log \log x)}^4 \\ |t_1 - t_3| \geq {(\log x)}^{-1/4} \\ |t_1 + t_3 | >1}} {\biggl( 1 + \frac{1}{|t_1 - t_3|} \biggr)}^3 \, dt_1 \, dt_3 ,
\]
similar arguments show that this is $\ll \frac{{(\log \log x)}^{170}}{{(\log x)}^{1/4}}$. Putting this all together, we have shown that~\eqref{equ:chebyshev} is $\ll \frac{1}{{(\log \log x)}^{1000}}$, completing the proof of Proposition~\ref{p:smallprimesconc}.
\end{proof}
We proceed with the proof of Proposition~\ref{p:roughestapplication}, which is an application of Lemma~\ref{l:roughintest}.
\begin{proof}[Proof of Proposition~\ref{p:roughestapplication}]
We begin with $\mathcal{V}_4 (x)$, which is
\[ 
\frac{{(\log \log x)}^{1/2}}{4 \pi^2} \iint_{\substack{|t_1|, |t_2| \leq {(\log \log x)}^4 \\ |t_1 - t_2| \leq \frac{{(\log \log x)}^5}{\log x}}} \frac{|F_{y} (1/2 + i t_1)|^2 \E \bigl[ I_y (t_1) \conj{I_y (t_2)} \bigr]}{|1/2 + it_1|^2} \sum_{\sqrt{x} < p \leq x} \frac{x^{i(t_1 - t_2)}}{p^{1 + i (t_1 - t_2)}} \, d t_1 \, d t_2 . 
\]
Note that we can drop the condition $|t_2| \leq {(\log \log x)}^4 $ at a small cost, seeing as the conditions that $|t_1| \leq {(\log \log x)}^4 $ and $|t_1 - t_2 | \leq \frac{{(\log \log x)}^5}{\log x}$ almost enforce this condition. Specifically, an application of Markov's inequality allows one to find that dropping this condition causes an additive error of size $\ll \frac{1}{{(\log x)}^{1/4}}$ with probability $1 - O \bigl( {(\log x)}^{-1/4} \bigr)$, say. Furthermore, as noted in the proof of Lemma~\ref{l:largeprimeav}, we have
\[ 
\E \Bigl[ I_y (t_1) \conj{I_y (t_2)} \Bigr] = \prod_{\sqrt{x}^{e^{-y}} < p \leq \sqrt{x}} \biggl( 1 + \frac{1}{p^{1 + i (t_1 - t_2)}} + O \biggl( \frac{1}{p^{3/2}} \biggr) \biggr),
\]
in both the Steinhaus and Rademacher cases. Recall that $y = \lfloor 100 \log \log \log x \rfloor$ and $B(x) = \sqrt{x}^{e^{-y}}$. Performing the change of variables $t_2 = t_1 - u$ allows one to find that $\mathcal{V}_4 (x)$ is equal to 
\[ 
\frac{{(\log \log x)}^{1/2}}{4 \pi^2} \int_{|t_1|\leq {(\log \log x)}^4} \biggl| \frac{F_{y} (1/2 + i t_1)}{1/2 + it_1} \biggr|^2 \int_{|u| \leq \frac{{(\log \log x)}^5}{\log x}} g(u) \, du \, d t_1, 
\]
where
\[ 
g(u) = \prod_{B(x) < p \leq \sqrt{x}} \biggl( 1 + \frac{1}{p^{1 + i u}} + O \biggl( \frac{1}{p^{3/2}} \biggr) \biggr) \sum_{\sqrt{x} < p \leq x} \frac{x^{iu}}{p^{1 + iu}},
\]
up to an error of size $\ll \frac{1}{{(\log x)}^{1/4}}$, with probability $1 - O ( {(\log x)}^{-1/4} )$. By Lemma~\ref{l:roughintest}, we have
\[ 
\int_{|u| \leq \frac{{(\log \log x)}^5}{\log x}} g(u) \, du = \frac{2 \pi e^{-\gamma} \log 2}{\log B(x)} \biggl( 1 + O \biggl( \frac{1}{\log \log x} \biggr) \biggr) .
\]
Thus, $\mathcal{V}_4 (x)$ is equal to 
\[ 
\frac{e^{-\gamma} \log 2 {(\log \log x)}^{1/2}}{2\pi \log B(x)} \biggl( 1 + O \biggl( \frac{1}{\log \log x} \biggr) \biggr) \int_{|t| \leq {(\log \log x)}^4} \biggl| \frac{F_{y} (1/2 + i t)}{1/2 + it} \biggr|^2 \, dt .
\]
Since ${(\log \log x)}^{1/2} = {(\log \log B(x))}^{1/2} + O ( \frac{\log \log \log x}{{(\log \log x)}^{1/2}} )$ we find that $\mathcal{V}_4 (x)$ is equal to
\[ 
\frac{e^{-\gamma} \log 2 {(\log \log B(x))}^{1/2}}{2\pi \log B(x)} \biggl( 1 + O \biggl( \frac{\log \log \log x}{{(\log \log x)}^{1/2}} \biggr) \biggr) \int_{|t| \leq {(\log \log x)}^4} \biggl| \frac{F_{y} (1/2 + i t)}{1/2 + it} \biggr|^2 \, dt .
\]
It follows from Markov's inequality and Lemma~\ref{l:mcexpectation} that
\[ 
\int_{\R} \biggl| \frac{F_{y} (1/2 + i t)}{1/2 + it} \biggr|^2 \, dt \leq \frac{\log B(x)}{{(\log \log x)}^{9/40}},
\]
with probability $1 - O ( {(\log \log x)}^{-1/4} )$, say. Hence $\mathcal{V}_4 (x)$ is equal to
\[ 
\frac{e^{-\gamma} \log 2 {(\log \log B(x))}^{1/2}}{2\pi \log B(x)} \int_{|t| \leq {(\log \log x)}^4} \biggl| \frac{F_{y} (1/2 + i t)}{1/2 + it} \biggr|^2 \, dt 
\]
plus an error of size at most $O ( {(\log \log x)}^{-1/5} )$, with probability $ 1 - O ( {(\log \log x)}^{-1/4} )$. Another application of Markov's inequality and Lemmas~\ref{l:epstein} and~\ref{l:eprad} similarly allow us to extend the integral to the whole real line. This completes the proof of Proposition~\ref{p:roughestapplication}.
\end{proof}
Theorem~\ref{t:variance} is completed by combining Propositions~\ref{p:perron},~\ref{p:smallprimes},~\ref{p:smallprimesconc}, and~\ref{p:roughestapplication}.

\section{Distribution of the variance: proof of Theorem~\ref{t:SWext}}\label{s:distribofvar} 
In this section, we prove Theorem~\ref{t:SWext}. Throughout this section, we will use the notation $F_{(x)} = \prod_{p \leq x} {\bigl( 1 - \frac{f(p)}{p^{1/2 + it}} \bigr)}^{-1}$. Theorem~\ref{t:SWext} will be a consequence of~\citet[Theorem~1.9]{SaksmanWebb}, which we quote below with our notation. 
\begin{proposition}[Theorem~1.9 of~\citet{SaksmanWebb}]
Let $\pi (x) = \# \{ p \leq x \}$. Fix any nonnegative continuous function $h \colon [0,1] \rightarrow [0,\infty)$. As $x \rightarrow \infty$, we have
\[
{\bigl(\log \log \pi(x)\bigr)}^{1/2} \int_{0}^{1} h(t) \frac{|F_{(x)} (1/2 + it)|^2}{\E |F_{(x)} (1/2 + it)|^2} \, dt \xrightarrow{d} \int_{0}^{1} h(t) g(t) \lambda (dt),
\]
where $g$ is a positive random continuous function such that the norms $\| g \|_{C^l [0,1]}$ and $\| 1 / g \|_{C^l [0,1]}$ possess moments of all orders, and $\lambda (dt)$ is a critical Gaussian multiplicative chaos measure.
\end{proposition}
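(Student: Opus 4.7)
The strategy is to reduce the Steinhaus Euler product to a log-correlated Gaussian field on $[0,1]$ and appeal to known critical GMC convergence results on the Gaussian side. The starting point is the scale decomposition $\log F_{(x)}(1/2+it) = \sum_{k=0}^{\mathcal{K}} L_k(t)$ with $\mathcal{K} \asymp \log \log x$, where $L_k(t)$ collects the contribution of primes $p \in (x^{e^{-(k+1)}}, x^{e^{-k}}]$. These summands are independent across $k$, each has variance $\approx 1/2$ on the real part, and $X_x(t) := \log|F_{(x)}(1/2+it)| = \Re \sum_k L_k(t)$ inherits the approximately logarithmic covariance $\E[X_x(t_1) X_x(t_2)] \approx -\frac{1}{2}\log|t_1 - t_2|$ for $1/\log x \leq |t_1 - t_2| \leq 1$, which is exactly the multi-scale structure of a log-correlated field at the critical parameter $\gamma = 2$.

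The next step is to couple $X_x$ with a Gaussian log-correlated field $Y_x = \sum_k Y_{x,k}$ of matching scale-by-scale covariance, via a Lindeberg-style replacement. Using the boundedness of $|\log(1-f(p)/p^{1/2+it})|$ and the summability $\sum_p 1/p^{3/2} < \infty$ to control third- and higher-order cumulants of $L_k - Y_{x,k}$, one shows that the non-Gaussian corrections accumulate into a positive random continuous function $g(t)$ such that, uniformly in $t \in [0,1]$, one has the multiplicative decomposition
\[
\frac{|F_{(x)}(1/2+it)|^2}{\E |F_{(x)}(1/2+it)|^2} = (1+o(1))\,g_x(t)\, \exp\bigl(2 Y_x(t) - 2 \E Y_x(t)^2\bigr),
\]
with $g_x \xrightarrow{d} g$ in an appropriate functional sense. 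The regularity of $g$ in $t$ and the required moment bounds on $\|g\|_{C^l}$ and $\|1/g\|_{C^l}$ come from the absolute convergence of the small-prime product that defines $g$. Then the standard Seneta--Heyde convergence at criticality for Gaussian log-correlated fields (Duplantier--Rhodes--Sheffield--Vargas, with refinements by Junnila--Saksman and Powell) yields
\[
(\log \log \pi(x))^{1/2}\, \exp\bigl(2Y_x(t) - 2\E Y_x(t)^2\bigr) dt \xrightarrow{d} \lambda(dt)
\]
as random measures on $[0,1]$, where $\lambda$ is the critical Gaussian multiplicative chaos measure of the limiting log-kernel. Integrating against the deterministic weight $h$ and combining with the coupling delivers the advertised limit $\int_0^1 h(t) g(t) \lambda(dt)$; note that $g$ and $\lambda$ will generally fail to be independent because the same $f(p)$ on small primes enters both through the scale-by-scale coupling.

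The main obstacle will be making the Gaussian coupling sharp enough to survive the delicate critical renormalization. At criticality the measure concentrates on points $t$ where $X_x(t)$ is within $O(\sqrt{\log \log x})$ of its maximum $\log \log x - \frac{3}{4}\log \log \log x + O(1)$, so the exponential integrand reaches values of order $(\log x)^2/(\log \log x)^{3/2}$ at typical contributing points, and a scale-by-scale coupling error of size $\varepsilon$ accumulates multiplicatively after exponentiation. Carrying this out rigorously needs the uniform maximum estimate of Arguin--Belius--Harper together with tightness of the derivative martingale at criticality. A possibly more robust alternative, and likely closer to what Saksman--Webb actually do, is to bypass the explicit Gaussian coupling and analyze the derivative martingale of $|F_{(x)}(1/2+it)|^2$ directly in the filtration generated by $(f(p))_{p \leq x^{e^{-k}}}$ in $k$, adapting the branching random walk arguments of Aidekon--Shi and Hu--Shi to this Dirichlet setting, with $g$ emerging as the non-Gaussian analogue of the martingale limit $D_\infty$.
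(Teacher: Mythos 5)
Note first that the paper does not prove this Proposition: it is quoted verbatim from Theorem~1.9 of Saksman and Webb and cited without proof, though the subsection ``The work of Saksman and Webb and the Rademacher case'' sketches their argument. Your high-level picture --- scale decomposition of $\log F_{(x)}(1/2+it)$, Gaussian approximation of the log-field, non-Gaussian corrections packaged into a random continuous $g$, critical measure $\lambda$ from the Gaussian side, with $g$ and $\lambda$ correlated --- does capture the shape of the Saksman--Webb argument.

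However, two of your mechanisms do not match theirs, and the first is a genuine gap. You propose a Lindeberg-style replacement, which compares \emph{distributions} but does not realise the Euler product field and the approximating Gaussian field on the same probability space. Saksman--Webb instead construct a pathwise coupling of the $f(p)$ with complex Gaussians $W_p$ so that the error field $2\Re\log F_{(x)}(1/2+it) - G_x(t)$ (with $G_x$ a Brownian-integral Gaussian field) is an honest random function converging almost surely, with all moments of its $C^l$ norms controlled (their Theorem~1.7). This is not an optional refinement: the coupling is precisely what makes $g(t) = \lim_{x \rightarrow \infty} \exp\bigl(2\Re\log F_{(x)}(1/2+it) - G_x(t)\bigr)$ a well-defined random function, and it is the only way the joint law of $(g,\lambda)$ makes sense as asserted in the statement. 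It also dissolves your worry about multiplicative error amplification at criticality: the error $2\Re\log F_{(x)}-G_x$ is not a small perturbation to be controlled scale by scale that might blow up under exponentiation; it is a finite, well-behaved random function, and that function \emph{is} $g$. Second, convergence on the Gaussian side is not obtained from a direct Seneta--Heyde argument but from a covariance-comparison theorem of Junnila and Saksman, comparing $G_x$ to reference Gaussian fields with the same log-kernel for which critical GMC convergence is already established. Your branching-random-walk alternative (Aidekon--Shi, Hu--Shi, derivative martingale in the prime filtration) is speculative and is not what Saksman--Webb do.
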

The function $\pi (x)$ appears because their Euler product is taken over the first $x$ primes. It can easily be replaced by $x$ seeing as the iterated logarithm grows slowly.
\subsection{The work of Saksman and Webb and the Rademacher case}\label{s:SW Outline}
Before proving Theorem~\ref{t:SWext}, we begin with a brief overview of the proof of~\cite[Theorem~1.9]{SaksmanWebb}, followed by the difficulties we encounter when modifying the proof to the Rademacher case. We adapt the notation so that it agrees with Theorem~\ref{t:SWext}, and sketch their proof over the compact domain $[-n,n]$. Beginning with
\[
\frac{{(\log \log x)}^{1/2}}{\log x} \int_{-n}^n \frac{e^{2 \Re \log F_{(x)} (1/2 + it)}}{|1/2 + it|^2} \, dt,
\]
the goal is to approximate $\Re \log F_{(x)} (1/2 + it)$ in the exponent by a Gaussian field for which convergence of the integral is known. If the Gaussian field approximates $2 \Re \log F_{(x)} (1/2 + it)$ well, then the (random) error of the approximation should possess nice properties, and will not affect the distributional convergence. This error is captured by the function $g$ in our results.~\citet{SaksmanWebb} perform many approximation steps to obtain a Gaussian field for which convergence is known. The first step is to apply Taylor expansion and obtain
\begin{equation}\label{equ:swfirststep}
\exp (2 \Re \log F_{(x)} (1/2 + it)) = \exp \biggl( 2 \Re \sum_{p \leq x} \frac{f(p)}{p^{1/2 + it}} + \text{ nice random error term} \biggr),
\end{equation}
where the error contributes to the function $g$. They then replace the $f(p)$'s by complex Gaussian random variables, $W_p$ (defined on the same probability space). The error induced is non-trivial seeing as the individual $f(p)$ \emph{are not} Gaussian, but they show that the error remains under control (see~\cite[Theorem~1.7]{SaksmanWebb}). They then work to replace the Gaussian random variables by integrals with respect to Brownian motion, and ultimately end up with an approximation to the original field $2 \Re \log F_{(x)} (1/2 + it)$ which is approximately
\[ 
G_{x} (t) = 2 \Re \int_{1}^{\log x} \sqrt{2 \widehat{C} (s)} e^{-2\pi i t s} \, d B_s^{\C}, 
\]
where $C(x) = \max\{ - \log |x|, 0 \}$, $\widehat{C}$ denotes its Fourier transform, and $d B_s^{\C}$ is a standard complex Brownian motion defined on the same probability space as ${(f(p))}_{p \text{ prime}}$. Note that the function $(\widehat{C} (s))^{1/2}$ directly corresponds to $\frac{1}{p^{1/2}}$ in~\eqref{equ:swfirststep}, $e^{-2\pi i t s}$ corresponds to $p^{-it}$, and the complex Brownian motion to our random variables $f(p)$. We can then write our original integral as
\begin{equation}\label{equ:swfinalint}
\frac{{(\log \log x)}^{1/2}}{\log x} \int_{-n}^n \frac{e^{2 \Re \log F_{(x)} (1/2 + it) - G_x (t)}}{|1/2 + it|^2} e^{G_x (t)} \, dt.
\end{equation}
One can then think of $g$ as the function $g(t) = \lim_{x \rightarrow \infty} e^{2 \Re \log F_{(x)} (1/2 + it) - G_x (t)}$ which is almost surely defined pointwise. Such a definition only makes sense if one has this underlying probability space, so we cannot give an explicit definition in Theorems~\ref{t:main},~\ref{t:SWext}, and Proposition~\ref{p:epconvonboundeddomains}. The proof is completed by noting that the covariances of $G_x$ are $\E G_x (t_1) G_x (t_2) = \max \{- 2 \log |t_1 - t_2|, 0 \} + o(1)$ for any fixed $t_1$ and $t_2$, and convergence of~\eqref{equ:swfinalint} follows by a covariance comparison result of~\citet[Theorem~1]{JunSaks}, utilising the fact that convergence is known for other Gaussian fields with the same covariance structure. \\

The Rademacher case presents some difficulties that we do not overcome here. By symmetry of the integral, we need to understand
\begin{equation}\label{equ:radintegral}
\frac{{(\log \log x)}^{1/2}}{\log x} \int_{0}^{n} \frac{e^{2 \Re \sum_{p \leq x} \frac{f(p)}{p^{1/2 + it}} + E_x (t)}}{| \zeta (1 + 2it) | |1/2 + it|^2} \, dt ,
\end{equation}
where $E_x(t)$ is a nice random error function and ${(f(p))}_{p \text{ prime}}$ are independent Rademacher random variables. The appearance of the zeta function comes from higher order terms in the Taylor expansion of $\Re \log F(1/2 + it)$ (for example, the second order term is $- \frac{1}{2} \sum_{p \leq x} \frac{1}{p^{1 + 2it}}$), noting that the Euler product formula for $\zeta (1 + 2 i t)$ converges conditionally when $t \neq 0$ (see~\cite[Section 3.15.1]{Titchmarsh}). One can try and proceed as in the Steinhaus case, beginning with $2 \Re \bigl( \sum_{p \leq x} \frac{f(p)}{p^{1/2 + it}} \bigr)$ and performing a Gaussian field approximation. By identical techniques to the Steinhaus case, one can obtain the analogous approximation
\[ 
G^{\mathrm{(Rad)}}_{x} = 2 \Re \int_{0}^{\log x} \sqrt{2 \widehat{C} (s)} e^{-2 \pi i t s} \, d B_s ,
\]
where $B_s$ is a standard real Brownian motion. Note that since the Rademacher random multiplicative function takes only real values, the Brownian motion does too. This is different to the Steinhaus case where we have a standard complex Brownian motion, and a consequence is that we have covariances $\E G^{\mathrm{(Rad)}}_{x} (t_1) G^{\mathrm{(Rad)}}_{x} (t_2) = \max \{- 2 \log |t_1 - t_2|, 0 \} + \max \{- 2 \log |t_1 + t_2|, 0 \} + o(1)$. It is unclear how to obtain convergence of the integrals when the covariances have this additional factor involving $\log |t_1 + t_2|$.

\subsection{Proof of Theorem~\ref{t:SWext}}\label{s:completionoffullintconv}
We begin the proof with a slight generalisation of~\citet[Theorem~1.9]{SaksmanWebb}.
\begin{proposition}[Adaptation of Theorem 1.9 of~\citet{SaksmanWebb}]\label{p:epconvonboundeddomains}
Fix $n \in \N$, and take any nonnegative continuous function $h \colon [-n,n] \rightarrow \R$. For $f$ a Steinhaus random multiplicative function, as $x \rightarrow \infty$, we have
\[ 
\frac{{(\log \log x)}^{1/2}}{\log x} \int_{-n}^{n} h(t) \biggl| \frac{F_{(x)} (1/2 + it)}{1/2 + it} \biggr|^2 \, dt \xrightarrow{d} \int_{-n}^{n} \frac{h(t) g(t)}{|1/2 + it|^2} \lambda (dt) ,
\]
where $\lambda (dt)$ is a critical multiplicative chaos measure and $g(t)$ is a positive random continuous function such that $\| g \|_{C^l [-n,n]}$ and $\| 1 / g \|_{C^l [-n,n]}$ possess moments of all orders. The random function $g (t)$ and the random measure $\lambda (dt)$ are independent of $n$.
\end{proposition}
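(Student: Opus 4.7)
The plan is to derive Proposition~\ref{p:epconvonboundeddomains} from Saksman--Webb's Theorem~1.9 by two reductions, and then to argue that the remaining content is an interval-extension that is already essentially contained in their proof. First, since $|1/2+it|^{-2} = (1/4+t^2)^{-1}$ is bounded, positive, and continuous on $[-n,n]$, the function $\tilde h(t) \coloneqq h(t)/|1/2+it|^2$ is itself a nonnegative continuous function on $[-n,n]$, so the weight $1/|1/2+it|^2$ may be absorbed into the test function. Second, in the Steinhaus case a direct calculation (as in the proof of Lemma~\ref{l:largeprimeav}) gives $\E |F_{(x)}(1/2+it)|^2 = \prod_{p\leq x}(1-1/p)^{-1}$, which is \emph{independent of $t$} and equals $e^{\gamma} \log x\,(1+o(1))$ by Mertens' theorem. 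Hence replacing $\log x$ by $e^{-\gamma} \E|F_{(x)}(1/2+it)|^2$ costs only a $(1+o(1))$ factor together with an overall constant $e^{\gamma}$, which can be absorbed into the random function $g$.

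After these two reductions, the claim becomes exactly Saksman--Webb's Theorem~1.9 with the interval $[0,1]$ replaced by $[-n,n]$ and $h$ replaced by $\tilde h$. I would then argue that this extension is essentially built into their proof: as summarised in Section~\ref{s:SW Outline}, they construct an approximation of $2\Re\log F_{(x)}(1/2+it)$ by a Gaussian field
\[
G_x(t) = 2 \Re \int_{1}^{\log x} \sqrt{2 \widehat{C}(s)}\, e^{-2\pi i t s}\, dB_s^{\C},
\]
which is defined for \emph{all} $t \in \R$ on a single underlying probability space via the complex Brownian motion $B^{\C}$. The random function $g(t) = \lim_{x\to\infty} e^{2\Re\log F_{(x)}(1/2+it) - G_x(t)}$ is therefore constructed globally on $\R$, and the critical chaos measure $\lambda$ (obtained as a limit of derivative-martingale-style objects associated to the Gaussian field) is also a measure on all of $\R$, independent of $n$. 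Convergence on $[-n,n]$ then follows from the Junnila--Saksman covariance comparison theorem applied on $[-n,n]$, using the fact that the covariances $\E G_x(t_1) G_x(t_2) = \max\{-2\log|t_1 - t_2|, 0\} + o(1)$ are translation invariant up to lower-order corrections, so known limit theorems for critical chaos on compact intervals transfer directly.

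The main technical obstacle is verifying that Saksman--Webb's approximation steps -- the Taylor expansion, the replacement of $f(p)$ by complex Gaussians (their Theorem~1.7), and the passage to the integrated Brownian field -- produce error terms whose $C^{l}[-n,n]$-norms have moments of all orders uniformly in $x$ and consistently as $n$ varies. Since their estimates rest on moment bounds for $\log F_{(x)}(1/2+it)$ and its derivatives that are either translation invariant or grow at most polynomially in $|t|$, this uniform control should follow by essentially the same arguments, but it does require going through their (long) chain of approximations to track how each error term depends on the interval. Once this uniformity is in place, one obtains a single random continuous $g$ on $\R$ and a single critical chaos measure $\lambda$ on $\R$ such that the conclusion of Proposition~\ref{p:epconvonboundeddomains} holds for every $n$, which is exactly what is required for the later extension to the full real line in Section~\ref{s:completionoffullintconv}.
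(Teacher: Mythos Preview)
Your proposal is correct and follows essentially the same route as the paper: absorb $|1/2+it|^{-2}$ into the test function, replace $\E|F_{(x)}(1/2+it)|^2$ by a constant times $\log x$ via Mertens (absorbing the constant into $g$ and checking the $1+o(1)$ factor is harmless in probability), and argue that Saksman--Webb's proof readily extends from $[0,1]$ to $[-n,n]$ with $g$ and $\lambda$ defined globally and hence independent of $n$. Your account of the interval extension is actually more detailed than the paper's, which simply asserts that the proof adapts; the only small point you omit is the harmless replacement of $\log\log\pi(x)$ by $\log\log x$.
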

\begin{remark}
Proposition~\ref{p:epconvonboundeddomains} tells us that for each $n$, $\| g \|_{C^l [-n,n]}$ and $\| 1/g \|_{C^l [-n,n]}$ possess moments of all orders, but does not give dependence on $n$. It can further be shown that, for any $\varepsilon > 0$, $\| \frac{g(t)}{|1/2 + it|^\varepsilon} \|_{L^\infty (\R)}$ possesses moments of all orders, but the calculations are omitted for brevity.
\end{remark}
\begin{proof}[Proof of Proposition~\ref{p:epconvonboundeddomains}]
We will describe the necessary adaptations to~\cite[Theorem~1.9]{SaksmanWebb}. First of all, their result gives convergence of integrals over $[0,1]$, but their proof readily adapts to arbitrary compact intervals, say $[-n,n]$ for $n \in \N$. Therefore, as $x \rightarrow \infty$, we have
\[ 
{\bigl(\log \log \pi (x) \bigr)}^{1/2} \int_{-n}^{n} h(t) \frac{|F_{(x)} (1/2 + it)|^2}{{\E |F_{(x)} (1/2 + it)|^2}} \, dt \xrightarrow{d} \int_{-n}^{n} h(t) g(t) \lambda (dt) ,
\]
for any nonnegative continuous function $h \colon [-n,n] \rightarrow \R$, where $g$ has the properties stated in the proposition. Looking at the proof of~\cite[Theorem~1.9]{SaksmanWebb}, we see that $g$ and $\lambda$ must be independent of $n$, seeing as increasing $n$ just extends them to a larger domain. We wish to replace $\E |F_{x} (1/2 + it)|^2$ in the denominator by $\log x$. It follows from Euler Product Result 1 of~\cite{HarperHM} and Mertens' second theorem that $\E |F_{(x)} (1/2 + it)|^2 = K \log x (1 + O(1/\log x))$, for some fixed constant $K > 0$. The factor of $K$ can be engulfed by the function $g(t)$ in Proposition~\ref{p:epconvonboundeddomains}. We have
\[ 
\frac{{\bigl( \log \log \pi (x) \bigr)}^{1/2}}{\log x (1 + O(1/ \log x))} = \frac{{(\log \log x)}^{1/2}}{\log x} + O \biggl( \frac{\log \log x}{{(\log x)}^2} \biggr),
\]
and an application of Markov's inequality shows that the contribution from the error term converges in probability to zero, so
has no impact on convergence in distribution. Relabelling $h \rightarrow \frac{1}{|1/2 + it|^2} h$ gives Proposition~\ref{p:epconvonboundeddomains}.
\end{proof}
\begin{proof}[Proof of Theorem~\ref{t:SWext}] 
Taking $h(t) = 1$ in Proposition~\ref{p:epconvonboundeddomains}, we know that, for any $n > 0$,
\[
\frac{{(\log \log x)}^{1/2}}{\log x} \int_{-n}^{n} \biggl| \frac{F_{(x)} (1/2 + it)}{1/2 + it} \biggr|^2 \, dt \xrightarrow{d} \int_{-n}^{n} \frac{g(t)}{|1/2 + it|^2} \lambda(dt) .
\]
Convergence of the full integral follows in two main steps. First, we will prove that the sequence 
\[ 
V_n \coloneqq \int_{-n}^{n} \frac{g(t)}{|1/2 + it|^2} \lambda(dt) ,
\]
converges in distribution, as $n \rightarrow \infty$, and that the limiting distribution satisfies the desired moment bounds. We will then show convergence in distribution of the full integral to this distribution using an approximation lemma. Beginning with the first step, we note that the collection of random variables 
\[
{\Biggl\{ {\biggl( \frac{{(\log \log x)}^{1/2}}{\log x} \int_{-n}^{n} \biggl| \frac{F_{(x)} (1/2 + it)}{1/2 + it} \biggr|^2 \, dt \biggr)}^{q} \Biggr\}}_{x \geq 10, \, n \in \N} ,
\]
is uniformly integrable for any fixed $0 < q < 1$, which follows from Lemma~\ref{l:mcexpectation}. Seeing as we have convergence in distribution for fixed $n \in \N$, it follows from uniform integrability (see, for example,~\cite[Theorem~5.5.9]{Gut}) that
\[
\E {\biggl( \int_{-n}^{n} \frac{g(t)}{|1/2 + it|^2} \lambda(dt) \biggr)}^{q} \leq C,
\]
for some fixed constant $C>0$, depending only on $q$, and in particular it is uniformly in $n$. Subsequently, the sequence of random variables ${(V_n)}_{n \in \N}$ is tight, and by Prokhorov's theorem has a subsequence that converges in distribution, say $V_{n_j} \xrightarrow{d} V_{\mathrm{crit}}$. By monotonicity of $\p (V_n \leq t)$, we have
\[
\p (V_{n_{j+1}} \leq t) \leq \p (V_m \leq t) \leq \p (V_{n_{j}} \leq t) ,
\]
whener $n_j < m \leq n_{j+1}$. Taking $j \rightarrow \infty$, it follows from the sandwich rule that
\[
\p \bigl( V_{\mathrm{crit}} \leq t \bigr) = \lim_{n \rightarrow \infty} \p \bigl( V_n \leq t \bigr),
\]
at every continuity point $t$ of $V_{\mathrm{crit}}$. By our uniform moment bound, it also follows (for example, by~\cite[Theorem~5.5.9]{Gut}) that $\E [V_{\mathrm{crit}}^{q}] \ll_q 1$, for all $0 < q < 1$. \\ 
We now perform the second step, showing convergence in distribution of the full integral to $V_{\mathrm{crit}}$. This will be a consequence of the following elementary lemma:
\begin{lemma}[Approximation result]\label{l:convoffullint}
Suppose that ${(Y_x)}_{x \geq 0}$ is a sequence of real-valued random variables and that we can decompose $Y_x = X_{x,n} + E_{x,n}$ for each $n \in \N$, such that the following assumptions hold:
\begin{enumerate}
\item For each $n \in \N$, $X_{x,n} \xrightarrow{d} Z_n$ as $x \rightarrow \infty$, for some random variables ${(Z_n)}_{n \in \N}$. 
\item $Z_n \xrightarrow{d} Z$ as $n \rightarrow \infty$, for some random variable $Z$. 
\item For any $\varepsilon > 0$, we have $\lim_{n \rightarrow \infty} \limsup_{x \rightarrow \infty} \p ( | E_{x,n} | > \varepsilon ) = 0 $.
\end{enumerate}
Then $Y_x \xrightarrow{d} Z$ as $x \rightarrow \infty$.
\end{lemma}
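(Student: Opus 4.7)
The plan is to establish convergence in distribution via Lévy's continuity theorem, so that it suffices to show the characteristic function $\phi_{Y_x}(\xi) := \E e^{i \xi Y_x}$ converges pointwise to $\phi_Z(\xi) := \E e^{i \xi Z}$ for every $\xi \in \R$. The appeal of the characteristic function route (as opposed to arguing directly with distribution functions) is that it avoids having to track continuity points of the intermediate laws of $Z_n$ and $X_{x,n}$, which could otherwise clutter the bookkeeping.

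I would begin with the triangle inequality
\[
|\phi_{Y_x}(\xi) - \phi_Z(\xi)| \leq |\phi_{Y_x}(\xi) - \phi_{X_{x,n}}(\xi)| + |\phi_{X_{x,n}}(\xi) - \phi_{Z_n}(\xi)| + |\phi_{Z_n}(\xi) - \phi_Z(\xi)|.
\]
By assumption (1) and Lévy's theorem, the middle term tends to zero as $x \to \infty$ for each fixed $n$; by assumption (2) the last term tends to zero as $n \to \infty$. The first term I would control by writing $Y_x = X_{x,n} + E_{x,n}$ and applying the elementary bound $|e^{iu} - 1| \leq \min(|u|, 2)$, split according to whether $|E_{x,n}|$ exceeds a threshold $\varepsilon > 0$:
\[
|\phi_{Y_x}(\xi) - \phi_{X_{x,n}}(\xi)| = \bigl| \E \bigl[ e^{i \xi X_{x,n}} (e^{i \xi E_{x,n}} - 1) \bigr] \bigr| \leq |\xi| \varepsilon + 2 \p(|E_{x,n}| > \varepsilon).
\]

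To assemble the final bound, fix $\xi \in \R$ and $\eta > 0$. First, choose $\varepsilon > 0$ small enough that $|\xi| \varepsilon < \eta/4$. Next, use assumption (3) to pick $n$ so large that $\limsup_{x \to \infty} \p(|E_{x,n}| > \varepsilon) < \eta/8$, and, refining $n$ if necessary, use assumption (2) to also arrange $|\phi_{Z_n}(\xi) - \phi_Z(\xi)| < \eta/4$. Finally, take $x$ large enough to make the middle term of the triangle inequality smaller than $\eta/4$ and $\p(|E_{x,n}| > \varepsilon) < \eta/8$. Summing gives $|\phi_{Y_x}(\xi) - \phi_Z(\xi)| < \eta$, and as $\eta$ and $\xi$ were arbitrary, Lévy's theorem delivers $Y_x \xrightarrow{d} Z$.

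This is a standard Slutsky-style approximation, so no step presents a genuine obstacle; the proof is essentially a careful ordering of the parameters $\varepsilon$, $n$, $x$. The only mildly non-obvious point is using the inequality $|e^{iu}-1| \leq \min(|u|,2)$ with the correct threshold split so as to obtain a bound controlled simultaneously by $\varepsilon$ and by the tail probability $\p(|E_{x,n}|>\varepsilon)$; once that observation is in place, the rest is bookkeeping.
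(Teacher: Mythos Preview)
Your proof is correct. The paper does not give a self-contained argument here at all: it simply cites Kallenberg's textbook (Theorem~4.2.8), noting that the third condition is slightly weaker than stated there but that this is harmless upon inspecting the proof. Your characteristic-function approach via L\'evy's continuity theorem is one of the standard ways to prove this approximation lemma, and it has the advantage of being entirely self-contained rather than deferring to a reference; the cost is a few lines of bookkeeping with $\varepsilon$, $n$, $x$, which you have ordered correctly.
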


\begin{proof}[Proof of Lemma~\ref{l:convoffullint}]
This follows from~\cite[Theorem~4.2.8]{Kallenberg}. We have a slightly weaker third condition which follows immediately upon inspecting the proof there.
\end{proof}
In Lemma~\ref{l:convoffullint}, we take $Y_x$ to be the full integral
\[ 
Y_x = \frac{{(\log \log x)}^{1/2}}{\log x} \int_{\R} \biggl| \frac{F_{(x)} (1/2 + it)}{1/2 + it} \biggr|^2 \, dt,
\]
and we write $Y_x = X_{x,n} + E_{x,n}$, where
\[ 
X_{x,n} = \frac{{(\log \log x)}^{1/2}}{\log x} \int_{-n}^n \biggl| \frac{F_{(x)} (1/2 + it)}{1/2 + it} \biggr|^2 \, dt ,
\]
and 
\[
E_{x,n} = \frac{{(\log \log x)}^{1/2}}{\log x} \int_{|t|>n} \biggl| \frac{F_{(x)} (1/2 + it)}{1/2 + it} \biggr|^2 \, dt .
\]
We have seen that $X_{x,n} \xrightarrow{d} V_n$ as $x \rightarrow \infty$ and $V_n \xrightarrow{d} V_{\mathrm{crit}}$ as $n \rightarrow \infty$. This establishes the first two conditions of Lemma~\ref{l:convoffullint}. For the last condition, note (similarly to the end of~\citet[Section~2.4]{HarperLM}) that
\begin{align*}
\E \bigl[ {|E_{x,n}|}^{2/3} \bigr] &\ll {\Biggl( \sum_{|m| > n-1} \frac{1}{m^2} \frac{{(\log \log x)}^{1/2}}{\log x} \int_{m - 1/2}^{m + 1/2} |F_{(x)} (1/2 + it)|^2 \, dt \Biggr)}^{2/3} \\
& \ll \sum_{|m| > n-1} \frac{1}{m^{4/3}} \E {\biggl( \frac{{(\log \log x)}^{1/2}}{\log x} \int_{m - 1/2}^{m + 1/2} |F_{(x)} (1/2 + it)|^2 \, dt \biggr)}^{2/3} \\
& = \sum_{|m| > n-1} \frac{1}{m^{4/3}} \E {\biggl( \frac{{(\log \log x)}^{1/2}}{\log x} \int_{-1/2}^{1/2} |F_{(x)} (1/2 + it)|^2 \, dt \biggr)}^{2/3},
\end{align*}
where the last line follows by translation invariance in law. As noted in the proof of Lemma~\ref{l:mcexpectation}, it follows from the main result proved in the section `Proof of the upper bound in Theorem 1, assuming Key Propositions 1 and 2' of~\cite[Section~4.1]{HarperLM} that the above expectation is bounded uniformly in $x$. Hence, performing the sum over $m$, we find that
\[
\E \bigl[ {|E_{x,n}|}^{2/3} \bigr] \ll n^{-1/3},
\]
uniformly in $x$. Applying Markov's inequality gives $\p (|E_{x,n}| > \varepsilon) \ll \varepsilon^{-2/3} n^{-1/3}$ uniformly in $x$, and the third condition of Lemma~\ref{l:convoffullint} follows immediately. Since all condition of the lemma are satisfied, we have $Y_x \xrightarrow{d} V_{\mathrm{crit}}$ as $x \rightarrow \infty$. This completes the proof of Theorem~\ref{t:SWext}.
\end{proof}

\section{Proof of Theorem~\ref{t:main} and corollaries}\label{s:completion}
\subsection{Proof of Theorem~\ref{t:main}} To complete the proof of Theorem~\ref{t:main}, we combine Proposition~\ref{p:steincna} and Theorems~\ref{t:variance} and~\ref{t:SWext}.
\begin{proof}[Proof of Theorem~\ref{t:main}]
It follows from Proposition~\ref{p:steincna} that, for any complex region $E = \{ x + iy \colon \Re (x) < a, \Im (y) < b \}$, we have
\begin{equation}\label{equ:gaussianbehav}
\p \Biggl( \frac{{(\log \log x)}^{1/4}}{\sqrt{x}} \sum_{\substack{n \leq x \\ P(n) > \sqrt{x}}} f(n) \in E \Biggr) = \p \bigl( Z \in E \bigr) + o(1) ,
\end{equation}
where $Z \sim \sqrt{\mathcal{V}_1 (x)} \, \mathcal{CN} (0,1)$, where $\mathcal{V}_1 (x)$ is independent of $\mathcal{CN} (0,1)$, and
\[ 
\mathcal{V}_1 (x) = \frac{{(\log \log x)}^{1/2}}{x} \sum_{\sqrt{x} < p \leq x} \Bigl| \sum_{m \leq x/p} f(m) \Bigr|^2 .
\]
Recall that
\[ 
\mathcal{V}_5 (x) = \frac{e^{-\gamma} \log 2}{2\pi} \frac{{(\log \log B(x))}^{1/2}}{\log B(x)} \int_{\R} \biggl| \frac{F_{y} (1/2 + it)}{1/2 + it} \biggr| \, dt ,
\]
where $y = \lfloor 100 \log \log \log x \rfloor$ and $B(x) = \sqrt{x}^{e^{-y}}$. We have
\[ 
\mathcal{V}_1 (x) = \mathcal{V}_5 (x) + \bigl( \mathcal{V}_1 (x) - \mathcal{V}_5 (x) \bigr),
\]
and by Theorem~\ref{t:variance}, the term in the parenthesis converges to zero in probability. Therefore, to understand the distribution of $\mathcal{V}_1 (x)$, it suffices to understand the limiting distribution of $\mathcal{V}_5 (x)$, which is given by Theorem~\ref{t:SWext}. Subsequently, we have
\[ 
\mathcal{V}_1 (x) \xrightarrow{d} C V_{\mathrm{crit}} ,
\]
where $V_{\mathrm{crit}}$ is as defined in Theorem~\ref{t:SWext}, and $C = \frac{e^{- \gamma} \log 2}{2 \pi}$. By the continuous mapping theorem~\cite[Theorem~5.10.4]{Gut}, convergence in distribution is preserved by continuous functions, so we have
\[ 
\sqrt{\mathcal{V}_1 (x)} \xrightarrow{d} \sqrt{C V_{\mathrm{crit}}} .
\]
It then follows from independence that $\sqrt{\mathcal{V}_1 (x)} \, \mathcal{CN} (0,1) \xrightarrow{d} \sqrt{C V_{\mathrm{crit}}} \, \mathcal{CN} (0,1)$, as $x \rightarrow \infty$. This can be shown using characteristic functions by conditioning on the complex normal and taking the limit in $x$. The proof is completed by combining this with~\eqref{equ:gaussianbehav}.
\end{proof}

\subsection{Proof of Corollaries~\ref{c:momentconvergence} and~\ref{c:heavytails}}
We first prove Corollary~\ref{c:momentconvergence}, which follows almost immediately from Theorem~\ref{t:main} and uniform integrability.
\begin{proof}[Proof of Corollary~\ref{c:momentconvergence}]
Fix any $0 < q < 2$. It follows from Lemma~\ref{l:mcexpectation} that the sequence
\[
\Biggl\{ {\biggl(\frac{{(\log \log x)}^{1/4}}{\sqrt{x}} \Bigl| \sum_{\substack{n \leq x \\ P(n) > \sqrt{x}}} f(n) \Bigr| \biggr)}^{q} \Biggr\}_{x \geq 2}
\]
is uniformly integrable. By Theorem~\ref{t:main} and the continuous mapping theorem~\cite[Theorem~5.10.4]{Gut}, we have 
\[
\frac{{(\log \log x)}^{1/4}}{\sqrt{x}} \Bigl| \sum_{\substack{n \leq x \\ P(n) > \sqrt{x}}} f(n) \Bigr| \xrightarrow{d} \sqrt{C V_{\mathrm{crit}}} |Z| ,
\]
as $x \rightarrow \infty$, where $C$ and $V_{\mathrm{crit}}$ are as in Theorem~\ref{t:main} and are independent of $Z \sim \mathcal{CN} (0,1)$. Applying~\cite[Theorem~5.5.9]{Gut}, it follows that
\begin{align*}
\lim_{x \rightarrow \infty} \E {\biggl(\frac{{(\log \log x)}^{1/4}}{\sqrt{x}} \Bigl| \sum_{\substack{n \leq x \\ P(n) > \sqrt{x}}} f(n) \Bigr| \biggr)}^{q} &= \E {\Bigl[ \sqrt{C V_{\mathrm{crit}}} |Z| \Bigr]}^{q} , \\
& = C^{\frac{q}{2}} \E \bigl[ V_{\mathrm{crit}}^{\frac{q}{2}} \bigr] \E {|Z|}^{q},
\end{align*}
where the last line follows from independence. It is a standard result that $\E |Z|^q = \Gamma (\frac{q}{2} + 1)$, where $\Gamma$ is the gamma function. This completes the proof of Corollary~\ref{c:momentconvergence}.
\end{proof}
We now prove Corollary~\ref{c:heavytails}. To prove the upper bound, we will make use of the following tail estimate for products of independent random variables (also found in~\cite[Lemma~2.12]{WongTail}).
\begin{lemma}\label{l:prodtailest}
Let $U$ and $V$ be nonnegative independent random variables. Suppose that there exists $C>0$ and $q > 0$ such that
\[ 
\p (U > t) \leq \frac{C}{t^q} ,
\]
for all $t > 0$, and 
\[ 
\E [ V^p ] < \infty
\]
for some $p > q$. Then 
\[ 
\p ( U V > t ) \leq \frac{C \E [ V^q ]}{t^q} ,
\]
for all $t > 0$.
\end{lemma}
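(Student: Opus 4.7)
The plan is to condition on $V$, apply the tail bound on $U$ pointwise, and then integrate out $V$. Since $U$ and $V$ are independent and nonnegative, for any $t > 0$ we can write
\[
\p(UV > t) = \E\bigl[\p(U V > t \mid V)\bigr] = \E\bigl[\p(U > t/V \mid V)\,\mathbf{1}_{V > 0}\bigr],
\]
because on the event $\{V = 0\}$ the product $UV$ vanishes and contributes zero. By independence, the conditional probability equals the unconditional tail function evaluated at $t/V$, which by hypothesis is bounded by $C V^q / t^q$.

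Since the bound $\min\{1, C V^q/t^q\} \leq C V^q / t^q$ is a valid pointwise majorant, taking expectations yields
\[
\p(UV > t) \leq \frac{C}{t^q}\,\E[V^q],
\]
which is the desired inequality. The only nontrivial input is that $\E[V^q]$ is finite; this follows from the hypothesis $\E[V^p] < \infty$ with $p > q$, either by the elementary bound $V^q \leq 1 + V^p$ or by Jensen's inequality applied to the concave function $x \mapsto x^{q/p}$ on $[0,\infty)$.

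There is no significant obstacle here: the proof is a standard conditioning argument, and the main point to articulate is the measurability/independence step that allows replacing $\p(U > t/V \mid V)$ by the function $v \mapsto \p(U > t/v)$ evaluated at $V$. I would present the argument in three short displays: (i) the conditioning identity, (ii) the application of the tail bound, (iii) the expectation bound, followed by a one-line remark explaining why $\E[V^q]$ is finite.
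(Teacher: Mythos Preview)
Your proof is correct and follows essentially the same conditioning argument as the paper: condition on $V$, apply the tail bound for $U$ at $t/V$, and take expectations. You are slightly more careful than the paper in handling the event $\{V=0\}$ and in explicitly justifying $\E[V^q]<\infty$ from the hypothesis $\E[V^p]<\infty$ with $p>q$, but the substance is identical.
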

\begin{proof}
Let $\E_U$ denote the expectation over $U$ and $\E_V$ the expectation over $V$. We have
\[
\p ( U V > t ) = \E_V \E_U [ \mathbf{1}_{UV>t} ] = \E_V \p \bigl( U > t/V \bigr) \leq \E_V \biggl[ \frac{C V^q}{t^q} \biggr] = \frac{C \E [ V^q ]}{t^q} ,
\]
where the inequality follows from the first assumption of the lemma.
\end{proof}
\begin{proof}[Proof of the upper bound in Corollary~\ref{c:heavytails}]
Fix any small $0 < \varepsilon < 1$. By Theorem~\ref{t:main}, we need to show that $ \p \bigl( | X Y | > y \bigr) \ll_{\varepsilon} 1/y^{2-\varepsilon}$, where
\[ 
X \sim \mathcal{C N} (0,1), \quad
Y \sim \sqrt{C V_{\mathrm{crit}}},
\]
where $C = \frac{e^{-\gamma} \log 2}{2 \pi} $. From Theorem~\ref{t:SWext} we know that $\E \bigl[ Y^{2-\varepsilon} \bigr] \ll_{\varepsilon} 1$, and it follows from Markov's inequality that 
\[ 
\p (Y > y) \ll_\varepsilon \frac{1}{y^{2 - \varepsilon}}.
\]
Since $X$ and $Y$ are independent, it follows from Lemma~\ref{l:prodtailest} that
\[ 
\p \bigl( |X Y| > y \bigr) \ll_\varepsilon \frac{\E \bigl[ |X|^{2 - \varepsilon} \bigr]}{y^{2 - \varepsilon}}.
\]
Seeing as $\E \bigl[ |X|^{2 - \varepsilon} \bigr]$ is uniformly bounded for all $0 < \varepsilon < 1$, we obtain the desired bound.
\end{proof}

\begin{proof}[Proof of the lower bound in Corollary~\ref{c:heavytails}]
To prove the lower bound result, we will make use of the following tail estimate for critical chaos measures: 
\begin{theorem}[Wong~\cite{WongTail}]\label{t:wonggmctail}
For any open set $A \subseteq [-2,2]$, say, such that $\mathrm{Leb} (\partial A) = 0$, and any nonnegative continuous function $f$ on $A$, we have
\[
\p \biggl( \int_{A} f(t) \lambda(dt) > y \biggr) = \frac{1}{y \sqrt{\pi}} \int_{A} f(t) \, dt + o(y^{-1}).
\]
\end{theorem}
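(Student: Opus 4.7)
The plan is to prove the tail asymptotic in three stages: reduce to an indicator $f = \mathbf{1}_I$ on a subinterval $I \subseteq A$, handle the indicator case via a Cameron--Martin / ``spine'' analysis of the underlying log-correlated field, and then extend back to continuous $f$ by a squeeze argument. The openness of $A$ together with $\mathrm{Leb}(\partial A) = 0$ ensures that $\int_A f(t)\, dt$ is unambiguous, that $\lambda(\partial A) = 0$ almost surely (critical GMC being non-atomic), and that dyadic partitions approximate $A$ well.

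First I would sandwich $f$ between dyadic step functions $f_-^\delta \leq f \leq f_+^\delta$ with $\| f_+^\delta - f_-^\delta \|_\infty \leq \delta$, which exist by uniform continuity of $f$ on $\overline{A}$. Tail monotonicity reduces the general case to step functions $\sum a_i \mathbf{1}_{I_i}$, and by the linearity of the conjectured asymptotic constant, together with essentially-disjoint supports for the dominant ``spikes'' in distinct intervals, this further reduces to the one-interval case $\p(\lambda(I) > y)$. For this case I would realise $\lambda$ via its Seneta--Heyde renormalisation, $\lambda(dt) = \lim_n \sqrt{n} \exp(\gamma_c X_n(t) - \tfrac{\gamma_c^2}{2} \E X_n(t)^2) \, dt$, where $X_n$ is a truncated log-correlated Gaussian field and $\gamma_c$ is the critical parameter of the associated field. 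For large $y$, the event $\{\lambda(I) > y\}$ is dominated by configurations in which $X_n(\cdot)$ has a single spike of height $\asymp \log y$ at a random location $t_0 \in I$. Performing a Cameron--Martin shift centred at $t_0$ (i.e.\ biasing the path measure by $\exp(\gamma_c X_n(t_0) - \tfrac{\gamma_c^2}{2} \E X_n(t_0)^2)$) rewrites the tail probability as an integral over the spike position $t_0 \in I$ and the spike height $h$; under this shift $X_n(\cdot)$ acquires a drift $\gamma_c \E[X_n(\cdot) X_n(t_0)] \sim -\gamma_c \log|\cdot - t_0|$, which is precisely the ``rooted'' measure. A Gaussian saddle-point evaluation in the height variable produces the $1/\sqrt{\pi}$ prefactor, and integration over $t_0$ yields $|I|/(y \sqrt{\pi}) + o(y^{-1})$.

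The main obstacle will be controlling the interaction between the spike and the surrounding chaos: conditional on a spike of height $\asymp \log y$ at $t_0$, one must show that the residual mass $\lambda(I \setminus B(t_0,r))$ contributes only a subleading correction for a carefully chosen window $r = r(y) \to 0$, with error terms that survive the integration over both $t_0$ and the spike height. This needs sharp moment bounds for critical GMC in the Seiberg window $0 < q < 1$ (in the spirit of Lemma~\ref{l:mcexpectation}) combined with a multi-scale / branching random walk description of the extremes of $X_n$, and it also requires showing that the contribution of configurations with two or more comparable spikes is only of order $y^{-2}$. A secondary difficulty is keeping the approximation error uniform in $\delta$ when passing back from step functions to continuous $f$; the cleanest route is to prove one-spike dominance uniformly across all dyadic intervals of a given scale, so that the $o(y^{-1})$ error in the step-function case survives the limit $\delta \to 0$ taken after $y \to \infty$.
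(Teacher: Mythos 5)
The paper does not prove this statement at all: it is quoted verbatim as a theorem of Wong~\cite{WongTail} and used as a black box in the proof of the lower bound in Corollary~\ref{c:heavytails}. So there is no internal proof to compare against; what you have written is an attempted reconstruction of Wong's argument.

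As a reconstruction it captures the correct high-level strategy in the critical GMC tail literature: Seneta--Heyde normalisation, a Cameron--Martin (Girsanov) shift to plant a single thick point, one-spike dominance, and a Gaussian computation in the height variable producing the universal $1/\sqrt{\pi}$ constant. But two points deserve caution. First, be aware that you are essentially re-deriving a published, technically heavy theorem; the step you flag as ``the main obstacle'' (controlling the residual mass away from the spike, and ruling out two comparable spikes uniformly in the spike position and height) is precisely where the bulk of Wong's paper lives, and it cannot be dispatched by appealing to moment bounds of the type in Lemma~\ref{l:mcexpectation} alone --- those give tightness, not the sharp tail constant. Second, the passing remark that $\lambda(\partial A) = 0$ a.s.\ ``since critical GMC is non-atomic'' is not a consequence of non-atomicity alone: critical GMC is singular with respect to Lebesgue measure, so $\mathrm{Leb}(\partial A) = 0$ does not by itself force $\lambda(\partial A) = 0$. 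For the finite unions of intervals and dyadic approximants you actually use this is harmless (the boundaries are finite sets, and atomlessness does apply), but the justification as stated is too quick. Given that the paper treats this as a citation, the pragmatic course is to do the same; if you want a self-contained proof you would need to reproduce Wong's multi-scale/branching-random-walk analysis in full.
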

This theorem holds more generally for $A$ inside bounded domains. Note that it applies to the specific critical Gaussian multiplicative chaos measure $\lambda (dt)$ appearing in our work. Alternatively, one could use a similar result to Lemma~\ref{l:prodtailest} for the lower bound (see~\cite[Lemma~2.12]{WongTail} for such a result). \\
Fix any $\varepsilon > 0$. We wish to show that $\p \bigl( | X Y | > y \bigr) \gg_\varepsilon 1/y^{2+\varepsilon}$, where
\[ 
X \sim \mathcal{C N} (0,1), \quad
Y \sim \sqrt{C V_{\mathrm{crit}}},
\]
are independent random variables given in Theorem~\ref{t:main}, and $C = \frac{e^{-\gamma} \log 2}{2 \pi} $. By independence and monotonicity of $\p (V_n \geq t)$ (which are increasing in $n$ with limit $\p (V_{\mathrm{crit}} \geq t)$) in Theorem~\ref{t:SWext}, we have
\[
\p \bigl( | X Y | > y \bigr) \geq \p \bigl( \sqrt{C}|X| > 1 \bigr) \p \bigl( V_{\mathrm{crit}} > y^2 \bigr) \gg \p \Biggl( \int_{0}^{1} \frac{g(t)}{|1/2 + it|^2} \lambda(dt) > y^2 \Biggr).
\]
So it suffices to lower bound the probability 
\[ 
\p \Biggl( \int_{0}^{1} g(t) \lambda(dt) > 4 y^2 \Biggr).
\]
We recall the fact that $\| 1 / g \|_{L^{\infty} [0,1]} $ possesses moments of all orders, so we lower bound the above by
\[
\p \Biggl( \frac{1}{\| 1 / g \|_{L^{\infty} [0,1]}} \int_{0}^{1} \lambda(dt) > 4 y^2 \Biggr) \geq \p \Biggl( \frac{1}{\| 1 / g \|_{L^{\infty} [0,1]}} > 4 y^{-\varepsilon} \text{ and } \int_{0}^{1} \lambda(dt) > y^{2+\varepsilon} \Biggr) .
\]
Seeing as $\p (A \cap B) \geq \p (A) - \p (B^c)$, we find that the right-hand side is bounded below by
\[ 
\p \Biggl( \int_{0}^{1} \lambda(dt) > y^{2+\varepsilon} \Biggr) - \p \Biggl( \| 1 / g \|_{L^{\infty} [0,1]} \geq y^{\varepsilon} / 4 \Biggr).
\]
The first probability is $\gg 1/y^{2+\varepsilon}$ by Theorem~\ref{t:wonggmctail}, while an application of Markov's inequality allows us to find that the second term is $\ll_{\varepsilon} 1/y^{100}$, say. This completes the proof of the lower bound.
\end{proof}

\subsubsection*{Acknowledgments}
The author would like to thank his supervisor, Adam Harper, for invaluable discussions throughout this project and for comments on a previous version of this paper. The author would also like to thank Eero Saksman for useful discussions regarding Section~\ref{s:distribofvar}, and is grateful to Ofir Gorodetsky, Christian Webb and Mo Dick Wong for valuable comments.

\subsubsection*{Rights Retention}
For the purpose of open access, the author has applied a Creative Commons Attribution (CC-BY) licence to any Author Accepted Manuscript version arising from this submission.

\printbibliography
\end{document}